\documentclass[twocolumn]{autart}  

\usepackage{tikz}
\usepackage{graphicx}   
\graphicspath{{./figs/}}      

\usepackage{mydef} 

\begin{document}

\begin{frontmatter}

\title{Distributionally \, Robust \, Nash \, Equilibrium \, Seeking \, with \, Partial \, Observations \, and \, Distributed \, Communication\thanksref{footnoteinfo}} 

\thanks[footnoteinfo]{This paper was not presented at any IFAC 
meeting. Corresponding author N.~Mandal.}

\author[dept]{Nirabhra Mandal}\ead{nmandal@ucsd.edu},  \hspace{2ex}
\author[dept]{Sonia Mart{\'\i}nez}\ead{soniamd@ucsd.edu}             

\address[dept]{Mechanical \& Aerospace Engineering Department,
  University of California San
  Diego.}  

\begin{abstract}                      
  In this work, we study stochastic one-shot games where agents'
  utilities depend on the collective strategy profiles of other agents
  as well as on some well-behaved randomness. While each
  decision-maker is agnostic to the random variable's underlying
  distribution, they have access to finitely many i.i.d. samples
  generated from it. We consider two cases: one where samples are
  shared; and another, more special one, where samples are
  individually accessible. To hedge against the unknown uncertainty,
  each agent plays a distributionally robust game and aims to
  maximize the worst-case expected utility over a Wasserstein ball
  around the sample average distribution. In this setting, we provide
  conditions under which the game has a non-empty set of
  distributionally robust Nash equilibria (DRoNE) and then
  characterize the closeness of the DRoNE set to the Nash equilibria
  (NE) of the associated stochastic game. We then propose an inertial,
  supported, better response, ascending supergradient dynamics
  (\algoname) that seeks the DRoNE's when the distributionally robust
  game possesses what we term as amicable supergradients. This forms
  the basis of a distributed version (d-\algoname) where agents
  estimate others' strategies by means of a dynamic consensus
  subroutine over a directed communication network. While initially
  the distributed algorithm works in the case where agents have
  individual samples, we later extend this to the case of shared
  observations under certain simplifying assumptions. This involves
  analyzing a tractable reformulation of the distributionally robust
  optimization problem and solving it in a distributed manner to
  compute the required supergradients. Simulations illustrate our
  results.
\end{abstract}

\begin{keyword}  
  Data-driven game, distributed communication, distributional
  robustness, Nash equilibrium, sample observation.
\end{keyword}   
\end{frontmatter}
\section{Introduction}\label{sec:intro}

The design of complex interacting systems via reward-oriented agents
is transforming real-world applications in autonomous systems,
critical infrastructure,
and mixed human-robot teams. 
Strategic settings, where emerging multi-agent behaviors depend on the
selfish actions of its members, can greatly be degraded by unknown
environmental uncertainty, potentially furthering their inherent
inefficiencies. With broad and distributed accessibility to data, it
is now possible to construct more powerful agents that exploit new
techniques such as distributionally robust optimization
(DRO). Although this can result in more robust equilibria, the
question remains as to how to effectively leverage this framework to
design provably-correct distributed learning algorithms for
multi-agent networks. Motivated by this, we study here the integration
of DRO with the game-theoretic framework, and propose novel learning
algorithms that can practically converge to an associated equilibrium.

\paragraph*{Literature review}

Non-cooperative Game Theory~\cite{TB:10a,YN:14}, 
provides a powerful framework for the analysis and design of
engineered multi-agent systems. Recent applications include
multi-vehicle networks~\cite{JRM-GA-JSS:09,MZ-SM:10},
telecommunication~\cite{YP-LP:09} and cyber-physical
systems~\cite{WS-ZH-HVP-TB:12}.
In particular, the consideration of self-interested agents allows for
the modeling of human behavior~\cite{JG-RS:14,PW-NM-SM-PT:24-tcns},
and the evaluation of their impact in socio-technical
systems~\cite{QZ-TB:25}, which opens the door to incentive
design~\cite{TR-IT:19,SH-BT-MA:24}.


In this framework, the concept of Nash equilibria (NE) characterizes
those lucrative equilibrium strategies that rational agents adopt in
response to others' decisions.  Thus, a variety of Nash-seeking
algorithms;
\emph{e.g.},~\cite{JSS-GA:05,PF-MK-TB:10,MY-GH:17,MY-QH-LD-SX:23},
have been developed to learn such equilibria. Often, these algorithms
require strong assumptions on utility functions and their
derivatives. In practice, only imperfect information about other
agents’ strategies is available~\cite{MY-QH-LD-SX:23}. While the early
work~\cite{JSS-GA:05} addresses this lack of information by means of
fictitious play~\cite{JSS-GA:05}, this requires a complete
communication graph among the agents.  Subsequently, this lack of
information has been mostly addressed via
consensus~\cite{JK-AN-UVS:12,MY-GH:17}, and gossip-based
algorithms~\cite{FS-LP:16}, which borrow ideas from the literature on
distributed optimization~\cite{AN-AO-PAP:08} and dynamic
consensus~\cite{SSK-BVS-JC-RAF-KML-SM:18-csm}. When utilities are
differentiable, these Nash-seeking algorithms can be interpreted as
passivity-based designs~\cite{DG-LP:18}~\cite{CDP-SG:19}, where the
gradient play of agents are corrected using an estimation term.  In
general, equilibria associated with games with ``convex'' utilities
over continuous strategy sets can be characterized via variational
inequalities~\cite{AK-UVS:10,RTR:18}, which has been exploited in the
development of Nash-seeking
algorithms~\cite{AK-UVS:10,YWC-CK-MA:24}. In
particular,~\cite{AK-UVS:10} uses proximal operators as regularization
terms similarly to how~\cite{TC-IK:23} use proximal terms for bi-level
gradient descent. We refer the reader to~\cite{MY-QH-LD-SX:23} for an
extensive list of distributed Nash-seeking algorithms.

Optimization under uncertainty is studied extensively in the field of
stochastic programming~\cite{AS-DD-AR:14} and its robust
counterpart~\cite{AB-LEG-AN:09}. However, these setups often require
knowledge of the underlying probability distribution, or
over-approximating bounds on the support of this distribution. When
many samples are available, deterministic optimization algorithms
using sample average approximations~\cite{SK-RP-SGH:14} provide
appropriate results.  For limited samples, distributionally robust
optimization (DRO) formulations, where one tries to maximize the
worst-case expected reward over an ambiguity set related to the
unknown data-generating distribution, can provide decisions with a
high confidence performance. These problems have been studied
since~\cite{SHE-KJA-SK:57} and, more recently
in~\cite{DB-MS:04,PME-DK:17}, and~\cite{CQ-HJ-PY:25}.  When dealing
with continuous random variables, DRO problems translate in an
infinite-dimensional optimization. As such,~\cite{PME-DK:17} leverages
concentration of measure inequalities to provide tractable
finite-dimensional reformulations under certain conditions, and
out-of-sample guarantees.  This form of Wasserstein DRO is extended to
the case of streaming samples in~\cite{DL-SM:19-tac-online} and to the
case where samples are obtained from a dynamical system under partial
observations~\cite{DB-JC-SM:23-tac}. In this article, we apply results
from DRO over a continuous sample space with fixed number of samples.

Distributional robust game
theory~\cite{SQ-DM-YZ-YD:17,YL-HX-SSY-JZ:18,JG-HT:18} is receiving
increasing attention, with current research differing in their
treatment of uncertainty. The work~\cite{SQ-DM-YZ-YD:17} focuses on
discrete random variables,
while~\cite{YL-HX-SSY-JZ:18,JG-HT:18,GP-TZ-QZ:20} handle continuous
ones. In these studies every agent is affected by their own
uncertainty, and neither account for the effect of limited
observations from shared samples among players in the equilibrium
formation.  More recently,~\cite{GP-TZ-QZ:20} produces a tractable
reformulation of a distributional robust game for finite strategy
spaces and mixed strategy equilibria.  Further,~\cite{GP-TZ-QZ:20}
shows that, as the number of data samples increases to infinity, the
distributionally robust equilibria asymptotically approach those of
the stochastic game. However, no out-of-sample guarantees under a
finite number of data points is provided.  A similar result is
provided in~\cite{FF-BF:23} for generalized Nash games. However, here
the uncertainty is used to define chance constraints on the the
feasible set of strategies, which does not directly affect the
utilities of the agents. None of these works aim to solve the robust
equilibrium seeking problem in a distributed manner.  The
work~\cite{ZW-GP-SG-MMZ-KHJ:25} proposes a distributionally robust
Nash equilibrium seeking algorithm. However, it assumes
differentiability of the associated utilities to produce a monotone
variational inequality reformulation that facilitates a
pseudo-gradient update towards the Nash equilibria.  On a different
take, the work in~\cite{NL-SF-SB-FD-DP:25} rather assumes that it is
the opponents' strategies that are generated by an unknown
distribution, and study an associated strategically robust
game. Distributionally robust Stackleberg games are considered
in~\cite{LL-SL-HX-DEQ:25}, where the authors assume strong Lipschitz
assumptions on the utility functions in order to provide gradient
based updates for the leader-follower dynamic game.  In previous
work~\cite{NM-AS-CN-SM:25}, we studied a distributionally robust task
allocation game among self-interested agents. However these results
apply for specific utility functions that model task allocation and
that are linear in the random variables.

\paragraph*{Contributions}
This paper considers a distributionally robust game (DRG) among a
group of strategic agents. In this setting, the distributionally robust
utilities are in general non-smooth.  Unlike previous
works in the literature, 
agents construct their ambiguity sets from partially observed samples
obtained as the image of an observation function. 

Assuming that these measurements can be stacked to define an
invertible mapping, we propose a modification of the ambiguity ball
that contains the original distribution with high confidence. Roughly
speaking, this approximation requires that the observation mapping
does not deviate too much from an optimal transport of the unknown
distribution to the empirical one.  We then identify conditions that
guarantee the existence of NE for the DRG and the associated
stochastic game. These conditions, which are standard requirements for
NE of continuous games, generalize well to DRGs. In particular,
the aforementioned uncertainty bounds allow us to obtain a
finite-sample error bound between the distributionally robust NE
(DRoNE) and the NE of the stochastic game. 
As robust utilities may be non-smooth, our DRoNE seeking strategy
leverages supergradients and inertial terms to produce an algorithm
(\algoname) that replicates monotonicity-like behavior for the
game. The algorithm is equally useful for seeking NE for any
non-smooth utility game when the game possesses what we term
``amicable'' supergradients. After this, we provide ultimate
boundedness guarantees to an arbitrarily small region around the DRoNE
set. Moreover, we report a heuristic to tune the algorithm parameters
to attain aforementioned arbitrarily small region to which the
algorithm converges to. We then exploit input-to-state stability
properties of dynamic average consensus algorithms and make \algoname
distributed by allowing agents to estimate others' strategies over a
communication network. Finally, to tackle shared observations, we
adapt the tractable reformulation of the distributionally robust
optimization into a distributed optimization problem. This allows
agents to compute their supergradients in a fully distributed
manner. As such, d-\algoname for shared observations does not require
full information from all agents and can handle the effect of partial
observations shared among the group of agents. We present simulations
that support our results.

The rest of the paper is organized as follows. In
Section~\ref{sec:prelim}, we list our notations and pre-existing
results that we use in this article. In Section~\ref{sec:problem}, we
formalize the distributionally robust game and state the problems we
are interested in addressing. Then in Section~\ref{sec:ne}, we relate
the Nash equilibria of the distributionally robust game with the
underlying stochastic game and provide conditions under which the
aforementioned equilibria exist. In Section~\ref{sec:algo}, we provide
an algorithm that seeks out the Nash equlibria of the distributionally
robust game. This algorithm requires central information from all
agents, so in the next two sections we provide ways to make the
algorithm distributed. Specifically, in Section~\ref{sec:dist_algo} we
provide a distributed algorithm under the simplifying assumption that
the agents are able to compute their own strategy updates provided
they have an estimate of other agents' strategies. In
Section~\ref{sec:dist_drones_shared} we provide a distributed
algorithm under the full generality where agents have to estimate
their own strategy updates and other agents' strategies over a
communication network. Finally, we provide simulations demonstrating
our results in Section~\ref{sec:sims} and conclude the article in
Section~\ref{sec:conclude}. We have included some of the proofs in the
Appendix.

\section{Preliminaries} \label{sec:prelim} Here, we formalize the
notations and briefly list some well-known concepts that are used to
solve the problem formulated in the following section.

\subsection{Notations}
The sets of real numbers, non-negative real numbers, positive real
numbers, non-negative integers, and positive integers are denoted as
$\real$, $\realnonneg$, $\realpos$, $\intnonneg$, and $\intpos$
respectively. For a set $\set$, $|\set|$ denotes its cardinality, and
$\set^n$ the $n$ Cartesian product of $\set$ with itself.  For a
metric space $(\mathcal{X},d)$,
$\ball_\epsilon(x) \ldef \{y \in \Xset | d(x,y) \leq \epsilon\}$ is
the $\epsilon$-radius closed ball around $x \in \Xset$.  Given
$\x \in \real^n$, and $\set \subseteq \real^n$; we use $\|\x\|_p$ to
denote the $p$-norm of $\x$, with $p \ge 1$, and
$d(\x,\set) \ldef \inf_{\y \in \set} \|\x-\y\|$ for the distance of
the vector to the set. Without any subscript, $\|\x\|$ refers to the
$2$-norm of $\x \in \real^n$. With appropriate dimensions, $\zero$ is
the zero vector, $\one$ is the vector with all entries $1$, $\bld{I}$
is the identity matrix, and $\bld{O}$ is the zero matrix.  For two
sets $\set_1$ and $\set_2$,
$\set_1 + \set_2 \ldef \{s_1+s_2 \,|\, s_1 \in \set_1, s_2 \in
\set_2\}$ denotes its Minkowski sum.  The probability of an event
(over a measurable space, which should be clear from the context) is
denoted by $\Pr \{\cdot \}$.  For a measurable function
$f : \real^n \to [0,\infty)$, $\|f\|$ denotes the $L^2$-norm of the
function.  Lastly, the empty set is denoted as $\emptyset$.

\subsection{Measure theory and concentration results}\label{subsec:measure_th}
Suppose $\Fcol$ is a $\sigma$-algebra on $\real^m$; 
and let $\mu$ and $\nu$ be measures on the measure space
$(\real^m, \Fcol)$. Then, $\nu$ is said to be absolutely continuous
w.r.t. $\mu$ (denoted as $\nu \ll \mu$) if $\mu(\set) = 0$ implies
that $\nu(\set) = 0$. 
Suppose that $f:\real^m \to \real^m$ is
a measurable function. Then, the \emph{$f$-pushforward measure} of
$\mu$ is a new measure denoted by $f_\sharp\mu$; and defined as
$f_\sharp\mu(\set) = \mu(f^{-1}(\set))$, $\forall \set \in \Fcol$.
Finally, we denote the Dirac delta function at $\x$ using $\delta_\x$;
and the multivariate Gaussian distribution with mean $\x$ and
covariance $\Sigma$ as $\distgauss{\x}{\Sigma}$.

Assume that $\xi \in \real^m$ is a random variable with an unknown
probability distribution, $\distp^\star$. 
When $\distp^\star$ is supported on
$\Xi \subseteq \real^m$ and $\exists \, a > 1$ such that
$\expect[\xi \sim \distp^\star]{\exp(\|\xi\|^a)} < \infty$; we say
that~$\distp^\star$ is light-tailed.  Let $\mathcal{M}(\Xi)$ be the
set of all probability distributions $\mathbb{Q}$ on $\Xi$ with
bounded first-order and second-order moments.
Then, the $1$-Wasserstein distance (we drop the $1$ in the sequel for
brevity) between two distributions
$\distq_1, \distq_2 \in \mathcal{M}(\Xi)$ is defined as~\cite{FS:15}
\begin{align}
  & d_\mathsf{W}(\distq_1, \distq_2)
    \ldef \hspace*{-2ex}\inf_{\pi \in \Pi(\distq_1,\distq_2)} \int_{\Xi^2}
    \|\xi_1 - \xi_2\|_1 \, \pi(\dg\xi_1, \dg\xi_2)\,, 
	\label{eq:wasserstein}
\end{align}
where, $\Pi(\distq_1,\distq_2)$ is the set of joint probability
distributions of $\xi_1$ and $\xi_2$ with marginals $\distq_1$ and
$\distq_2$ respectively. Further,
\begin{align*}
  \mathcal{B}_\varepsilon (\distp)
  \ldef \{ \distq \in \mathcal{M}(\Xi) \,|\,
  d_\mathsf{W}(\distq, \distp) \leq \varepsilon \},
\end{align*}
is the closed Wasserstein ball of radius $\varepsilon$ around
$\distp \in \mathcal{M}(\Xi)$. Under some conditions, the
aforementioned Wasserstein distance can be written in different
equivalent ways, which become useful later. When $\bbq_1$ is
absolutely continuous w.r.t. the Lebesgue measure, the Monge
formulation of optimal transport gives,
  \begin{align} 
  \dist[W](\bbq_1,\bbq_2) & = \inf \Big\{
      \int_{\Xi} \|\xi - T(\xi)\|_1 \, \bbq_1(\dg \xi) \Big|
      T_{\sharp}\bbq_1 = \bbq_2 \Big\},
    		\label{eq:monge_dist}
  \end{align}
  where $T : \real^m \to \real^m$ is a measurable function, or
  deterministic transport map.  Finally, the Kantorovich-Rubinstein
  description of the Wasserstein distance is,
\begin{align}
  \dist[W](\bbq_1,\bbq_2)
  = \sup_{f \in \mathcal{L}}
  \Big\{ \int_{\Xi} f(\xi) \bbq_1(\dg \xi)
  - \int_{\Xi} f(\xi) \bbq_2(\dg \xi) \Big\},
	\label{eq:kr_dist}
\end{align}
where
$\mathcal{L} \ldef \big\{f \,\,\big|\,\, |f(\xi_1) - f(\xi_2)| \leq \|\xi_1 -
\xi_2\|,\, \forall \xi_1,\xi_2 \in \Xi \big\}$ is the set of all Lipschitz
continuous functions with Lipschitz constant $1$.

Let $\{\hat{\xi}(k) \}_{k = 1}^N \subseteq \real^m$ be i.i.d.~samples
generated by a distribution $\distp^\star$.  The following result
characterizes the closeness of $\distp^\star$ to the sample average
distribution
$\distphat^{N} \ldef \frac{1}{N} \sum_{k = 1}^{N}
\delta_{\hat{\xi}(k)}$.  The result is useful to provide guarantees
from \emph{distributionally robust optimization}~\cite{PME-DK:17}.

\begin{thm}
\label{thm:dro_bound}
\cite{DB-JC-SM:19-ecc} Suppose $\theta \in (0,1)$, and
\begin{align}
	\varepsilon = 
	\begin{cases}
          \left( \frac{\log(c_1 \theta^{-1})}{c_2 N} \right)^{1/\max\{m,2\}},
          & \text{if } N \geq \frac{\log(c_1 \theta^{-1})}{c_2}\,; \\
          \left( \frac{\log(c_1 \theta^{-1})}{c_2 N} \right)^{1/a},
          & \text{if } N < \frac{\log(c_1 \theta^{-1})}{c_2} \,;
	\end{cases}
	\label{eq:dro_bound}
\end{align}
where $c_1$ , $c_2$ are positive constants that only depend on $a$ and
$m$.  Then,
$ \Pr \big\{ \distp^\star \in \mathcal{B}_\varepsilon (\distphat^{N})
\big\} \geq 1 - \theta$.  \bulletend
\end{thm}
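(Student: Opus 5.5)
This statement is the measure-concentration result of Fournier and Guillin, recast as a confidence ball in the way Esfahani and Kuhn do it. I would derive it by inverting a tail bound rather than proving anything new about empirical measures.

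The concentration inequality I would invoke is the following. For a light-tailed $\distp^\star$ on $\real^m$, meaning $\expect[\xi \sim \distp^\star]{\exp(\|\xi\|^a)} < \infty$ for some $a>1$, there are constants $c_1,c_2>0$, depending only on $a$, $m$ and the exponential moment, such that for all $N\ge 1$ and $\varepsilon>0$,
\begin{align*}
\Pr\big\{ d_\mathsf{W}(\distp^\star,\distphat^N) \ge \varepsilon \big\} \le
\begin{cases}
c_1 \exp\!\big(-c_2 N \varepsilon^{\max\{m,2\}}\big), & \varepsilon \le 1,\\
c_1 \exp\!\big(-c_2 N \varepsilon^{a}\big), & \varepsilon > 1.
\end{cases}
\end{align*}
Two technical points need attention here:
\begin{itemize}
\item For $m=2$ the original bound carries a logarithmic correction. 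I would absorb it, as is standard, by slightly weakening the exponent and adjusting $c_1,c_2$.
\item The paper's Wasserstein distance uses the $1$-norm cost. This changes only constants, by norm equivalence on $\real^m$.
\end{itemize}

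The inversion goes as follows. Fix $\theta\in(0,1)$ and set the right-hand side equal to $\theta$. In the regime $\varepsilon\le 1$ this gives $\varepsilon^{\max\{m,2\}} = \log(c_1\theta^{-1})/(c_2N)$. In the regime $\varepsilon>1$ it gives $\varepsilon^{a}=\log(c_1\theta^{-1})/(c_2N)$.

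Both expressions have the form $t^{1/p}$ with $t=\log(c_1\theta^{-1})/(c_2N)$. Because the map $t\mapsto t^{1/p}$ fixes $1$, the condition $\varepsilon\le1$ is equivalent to $t\le 1$, that is, $N\ge \log(c_1\theta^{-1})/c_2$. So the case split in \eqref{eq:dro_bound} is consistent with the regime split of the tail bound. I would also take $c_1\ge1$ if necessary so that $\log(c_1\theta^{-1})>0$.

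With $\varepsilon$ chosen this way, $\Pr\{d_\mathsf{W}(\distp^\star,\distphat^N)\ge\varepsilon\}\le\theta$. Taking the complement gives $\Pr\{d_\mathsf{W}(\distp^\star,\distphat^N)<\varepsilon\}\ge 1-\theta$, and hence $\Pr\{\distp^\star\in\mathcal{B}_\varepsilon(\distphat^N)\}\ge1-\theta$, since the ball is closed.

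The main obstacle is the concentration inequality itself. Its proof rests on a dyadic multiscale partition of $\real^m$ that bounds $d_\mathsf{W}$ by a weighted sum of total-variation discrepancies across scales. A Bernstein/Chernoff bound is applied on each cell, and the light-tail moment condition controls the mass outside large balls. I would cite it rather than reprove it, and would only check that its hypotheses are exactly the light-tailedness assumed here.
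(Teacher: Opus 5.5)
Your inversion of the Fournier--Guillin tail bound is the same argument that underlies the cited result. The paper itself only cites \cite{DB-JC-SM:19-ecc}, which inherits the radius formula from \cite{PME-DK:17}. For $m\neq 2$ your argument is correct:
\begin{itemize}
\item the case split on $N$ does match the split $\varepsilon\le 1$ versus $\varepsilon>1$;
\item enlarging $c_1$ to make $\log(c_1\theta^{-1})>0$ is harmless;
\item switching to the $\ell_1$ cost only changes constants.
\end{itemize}
You are also right that $c_1,c_2$ must depend on the exponential moment and not only on $a$ and $m$. Rescaling $\xi$ rescales $d_\mathsf{W}$, so the statement's wording is too strong on that point.

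The step that fails is your treatment of $m=2$. There the bound for $\varepsilon\le 1$ is $c_1\exp\bigl(-c_2N(\varepsilon/\log(2+1/\varepsilon))^2\bigr)$. Because $\log(2+1/\varepsilon)\to\infty$ as $\varepsilon\to 0$, no adjustment of constants turns this into $c_1'\exp(-c_2'N\varepsilon^2)$. Weakening the exponent to $2+\delta$ does work, but inverting that bound gives the radius $\bigl(\log(c_1\theta^{-1})/(c_2N)\bigr)^{1/(2+\delta)}$, not the exponent $1/\max\{m,2\}=1/2$ required by \eqref{eq:dro_bound}.

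No other argument can rescue the exponent $1/2$ either. For $m=2$ the displayed formula implies $\Pr\{d_\mathsf{W}(\distp^\star,\distphat^N)>\varepsilon\}\le c_1e^{-c_2N\varepsilon^2}$ for $\varepsilon\le 1$, together with the $\varepsilon^a$ tail above $1$. That forces $\mathbb{E}[d_\mathsf{W}(\distp^\star,\distphat^N)]=O(N^{-1/2})$. This contradicts the Ajtai--Koml\'os--Tusn\'ady lower bound of order $\sqrt{\log N/N}$ for the uniform law on $[0,1]^2$. That law satisfies every hypothesis here, and the $\ell_1$ cost only increases $d_\mathsf{W}$ relative to the Euclidean one.

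So the result as stated needs the restriction $m\neq 2$, exactly as in \cite{PME-DK:17}, or a separate $m=2$ radius obtained by inverting the log-corrected bound. Your write-up should say this rather than claim the logarithm can be absorbed.
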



\subsection{Size functions and input-to-state stability}
We recall that a continuous function
$\sigma : \realnonneg \to \realnonneg$ is class-$\Mcal{K}$
(\emph{i.e.}
$\sigma \in \Mcal{K}$) if it is strictly increasing and
$\sigma(0) = 0$. Further, $\sigma$ is class-$\Mcal{K}_\infty$
(\emph{i.e.} $\sigma \in \Mcal{K}_\infty$) if $\sigma \in \Mcal{K}$
and $\sigma(x) \to \infty$ as $x \to \infty$. Finally, a continuous
function $\beta : \realnonneg \times \realnonneg \to \realnonneg$ is
class-$\Mcal{KL}$ (\emph{i.e.} $\beta \in \Mcal{KL}$) if
$\forall y \in \realnonneg$, $\beta (\cdot,y) \in \Mcal{K}$ and
$\forall x \in \realnonneg$, $\beta(x,\cdot)$ is decreasing with
$\beta(x,y) \to 0$ as $y \to \infty$. The following notion will be
used in the stability analysis in the sequel.
 
\begin{defn}\thmtitle{Size function~\cite{ES:22}}
\label{def:size_func}
A function ${\omega: \real^n \to \real}$ is said to be a size function
for the compact subset $\set \subset \real^n$ if $\omega$ is i)
continuous; ii) positive definite with respect to $\set$, \emph{i.e.}
$\omega(\x) = 0$, if $\x \in \set$ and $\omega(\x) > 0$, if
$\x \notin \set$; and iii) proper, \emph{i.e.}
$\forall r \in \realnonneg$, the sublevel set
$\{\x\,|\, \omega(\x) \leq r \}$ is a compact subset of $\real^n$.
\bulletend
\end{defn}
Size functions have also been referred to as \emph{proper indicators}
in~\cite{CMK-PMD:12,NN-RG-LG-BSR-FRW:17}. For example, the previously
defined distance function $d(\cdot,\set)$ is a size function for
$\set$. With this, the following result provides a useful comparison
between size functions and class-$\Mcal{K}_\infty$ functions.
\begin{lem}
\label{lem:size_func}
\cite{ES:22} Suppose $\omega_1, \omega_2$ are size functions for
$\set \subset \real^n$. Then there exists functions
$\sigma_1, \sigma_2 \in \Mcal{K}_\infty$ such that
\begin{align*}
  \sigma_1(\omega_2(\x)) \leq \omega_1(\x) \leq \sigma_2(\omega_2(\x)),
  \quad \forall \x \in \real^n. \tag*{\bulletsym}
\end{align*}
\end{lem}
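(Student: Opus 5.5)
To prove Lemma~\ref{lem:size_func}, I will compare each size function with the distance $d(\cdot,\set)$, which is itself a size function for $\set$. If I can find $\alpha_i,\bar\alpha_i\in\Mcal{K}_\infty$ with
\begin{align*}
\alpha_i(d(\x,\set)) \le \omega_i(\x) \le \bar\alpha_i(d(\x,\set)), \quad \forall \x \in \real^n,
\end{align*}
then composing gives the claim with $\sigma_1 = \alpha_1\circ\bar\alpha_2^{-1}$ and $\sigma_2=\bar\alpha_1\circ\alpha_2^{-1}$. Both are class-$\Mcal{K}_\infty$, since inverses and compositions of $\Mcal{K}_\infty$ functions stay in $\Mcal{K}_\infty$. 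So it suffices to sandwich a single size function $\omega$ by functions of $r=d(\x,\set)$.

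\textbf{Upper bound.} Define $\psi(r)\ldef \max\{\omega(\x) \,|\, d(\x,\set)\le r\}$. The set $\{d(\x,\set)\le r\}$ is compact because $\set$ is compact, and $\omega$ is continuous, so the maximum exists. The function $\psi$ is nondecreasing and $\psi(0)=0$, since $\omega$ vanishes on $\set$. It is also continuous: continuity at $0$ follows from uniform continuity of $\omega$ on a compact neighborhood of $\set$, and continuity elsewhere follows from the same argument. I then set $\bar\alpha(r)=\psi(r)+r$, which is strictly increasing and unbounded, hence in $\Mcal{K}_\infty$.

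\textbf{Lower bound (main obstacle).} Define $\phi(r)\ldef\inf\{\omega(\x)\,|\, d(\x,\set)\ge r\}$. This function is nondecreasing with $\phi(0)=0$. Positivity for $r>0$ and unboundedness both come from properness.
\begin{itemize}
\item If $\phi(r)=0$ for some $r>0$, there is a sequence with $\omega(\x_k)\to0$ and $d(\x_k,\set)\ge r$. The $\x_k$ lie in the compact sublevel set $\{\omega\le 1\}$, so a subsequence converges to a limit $\x$ with $\omega(\x)=0$ and $d(\x,\set)\ge r$. This contradicts positive definiteness.
\item If $\phi$ were bounded by some $M$, then the sublevel set $\{\omega\le M+1\}$ would contain points arbitrarily far from $\set$, contradicting its compactness.
\end{itemize}
The difficulty is that $\phi$ need not be continuous or strictly increasing. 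I will fix this with the standard regularization $\alpha(r)\ldef \frac{r}{1+r}\cdot\frac1r\int_0^r \phi(s)\,ds$, or more simply $\alpha(r)=\frac{1}{r}\int_0^r\min\{\phi(s),s\}\,ds$, adding a small strictly increasing term if needed to secure unboundedness. Averaging a nondecreasing function gives a continuous function bounded above by $\phi(r)$. The step I must check carefully is that this construction is strictly increasing, unbounded, and still lies below $\phi$. Once that is done, $\alpha(d(\x,\set))\le\phi(d(\x,\set))\le\omega(\x)$, which completes the sandwich.
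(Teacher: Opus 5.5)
The paper does not prove this lemma; it is imported from the cited reference, so there is no in-paper argument to compare with. On its own terms your proof is correct.

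The composition step is right. So is the continuity of $\psi$: a point with $d(x,\set)\le r'$ lies within $r'-r$ of a point with $d(\cdot,\set)\le r$, obtained by sliding toward a nearest point of $\set$. Hence $0\le\psi(r')-\psi(r)$ is bounded by the modulus of continuity of $\omega$ on $\{d(\cdot,\set)\le R\}$. Your positivity and unboundedness arguments for $\phi$ are also sound.

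The regularization you deferred goes through using only monotonicity of $\phi$, which makes $\phi$ measurable and locally bounded. Let $F(r)=\int_0^r\phi(s)\,ds$, which is continuous with $F(0)=0$, and take your first candidate $\alpha(r)=F(r)/(1+r)$.
\begin{itemize}
\item Since $F(r)\le r\phi(r)$, you get $\alpha(r)\le\frac{r}{1+r}\phi(r)\le\phi(r)$.
\item For $0<r_1<r_2$, combine $F(r_2)-F(r_1)\ge(r_2-r_1)\phi(r_1)$ with $F(r_1)\le r_1\phi(r_1)$. This gives $F(r_2)(1+r_1)-F(r_1)(1+r_2)\ge(r_2-r_1)\phi(r_1)>0$, so $\alpha$ is strictly increasing.
\item From $F(r)\ge\int_{r/2}^{r}\phi\ge\frac r2\phi(r/2)$ you get $\alpha(r)>0$ for $r>0$. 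You also get $\alpha(r)\ge\frac14\phi(r/2)\to\infty$ for $r\ge1$.
\end{itemize}
Your second candidate works as well, and it is already unbounded because $\min\{\phi(s),s\}\to\infty$. So drop the hedge about adding a strictly increasing term. It is never needed, and adding anything to $\alpha$ can destroy $\alpha\le\phi$, for instance when $\phi(r)$ behaves like $r^2$ near $0$.

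A shorter route avoids infima over the non-compact sets $\{d(\cdot,\set)\ge r\}$ altogether. Prove only the one-sided fact: for any two size functions $\omega_a,\omega_b$ there is $\kappa\in\mathcal{K}_\infty$ with $\omega_a\le\kappa(\omega_b)$. To get it, set $m(r)=\max\{\omega_a(x)\mid\omega_b(x)\le r\}$. This is finite by properness of $\omega_b$, and $m(r)\to0$ as $r\to0$ by compactness and positive definiteness. Then majorize $m$, e.g.\ by $\kappa(r)=\frac1r\int_r^{2r}m(s)\,ds+r$. Apply this with the roles of $\omega_1,\omega_2$ exchanged and invert $\kappa$ to get the lower bound. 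Your route instead exploits that the sublevel sets of $d(\cdot,\set)$ grow continuously, which gives continuity of $\psi$ for free. The price is a separate minorization argument for the lower bound.
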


The previously discussed concepts are used to recall the notion of
\emph{input-to-state stability} for a non-linear autonomous system
\begin{align}
	\x(t+1) = \f(\x(t),\u(t)), \quad \forall t \in \intnonneg;
	\label{eq:dummy_sys}
\end{align}
with continuous $\f: \real^n \times \real^m \to \real^n$.
\begin{defn}\thmtitle{Input-to-state stability~\cite{EDS-YW:99}}
\label{def:iss}
Suppose $\omega$ is a size 
function for
$\set \subset \real^n$. Then,~\eqref{eq:dummy_sys} is said to be
\emph{input-to-state stable (ISS)} if there exists
$\beta \in \Mcal{KL}$ and $\gamma \in \Mcal{K}$, such that
$\forall \x_0 \in \real^n$ and $\forall \{\u(t)\}_{t \in \intnonneg}$
(bounded), the solution $\{\x(t)\}_{t \in \intnonneg}$ satisfies,
$\forall t \in \intnonneg$,
\begin{align*}
  \omega(\x(t)) \leq \max
  \left\{\beta\Big(\omega(\x_0),t\Big), \,\,
  \gamma\left(\sup_{\tau \in [0,t]} \|\u(\tau)\|\right)\right\}\,.
  \tag*{\bulletsym}
\end{align*}
\end{defn}
Finally,~\eqref{eq:dummy_sys} is
ISS~\cite{NN-RG-LG-BSR-FRW:17}~\cite{ES:22} if and only if there
exists a continuous (Lyapunov) function $V : \real^n \to \real$;
$\sigma_1, \sigma_2, \sigma_3 \in \Mcal{K}_\infty$; and
$\gamma \in \Mcal{K}$ such that, $\forall \x \in \real^n$,
$\forall \u \in \real^m$,
\begin{subequations}
\begin{align}
  \label{eq:iss_v_bound} &
                           \sigma_1(\omega(\x)) \leq V(\x)
                           \leq \sigma_2(\omega(\x)), \\
  \label{eq:iss_v_change} &
                            V(\f(\x,\u)) - V(\x) \leq -
                            \sigma_3(V(\x)) + \gamma(\|\u\|).
\end{align}
\label{eq:iss}
\end{subequations}  

\subsection{Convex analysis}
A set $\dom \in \real^n$ is said to be convex if for any
$\x,\y\in \dom$, $\lambda\x +[1-\lambda]\y \in \dom$,
$\forall \, \lambda \in [0,1]$. For a convex set $\dom$ and a point
$\x \in \dom$,
$\nc_{\dom}(\x) \ldef \{\v \in \real^n \,|\, \v^\top(\y - \x) \leq 0,
\forall \, \y \in \dom\}$ is the normal cone at $\x$ to the set
$\dom$.

A function $f : \dom \to \real$ with a convex domain
$\dom \subseteq \real^n$ is 
convex if
$f(\lambda\x + [1-\lambda]\y) \leq \lambda f(\x) + [1-\lambda] f(\y)$,
$\forall \x,\y \in \dom$, $\forall \lambda \in [0,1]$. Further, $f$ is
concave iff $-f$ is convex. The concavity of functions can be
characterized via \emph{supergradients}, as defined next.

\begin{defn}
\label{def:subgr}
\thmtitle{Supergradient of a concave function} Consider the concave
function $f$ defined earlier. Then $\zetab \in \real^n$ is said to be a
supergradient of $f$ at $\x$ if
\begin{align}
	f(\y) \leq f(\x) + \ip{\zetab}{[\y-\x]}, \forall \y \in \dom\,.
\end{align}
The set of all supergradients of $f$ at $\x$ is $\del f(\x)$.
\bulletend
\end{defn}
For such a concave function, $\del f(\x)$ is always a closed convex set.
Consider a function $f:\dom_1\times\dom_2 \to \real$ (with
$\dom_1 \subseteq \real^n$ convex) for which $f(\cdot,\y)$ is concave,
$\forall \y \in \dom_2$. Then, we use $\del_\x [f(\x,\y)] |_{\xb}$ to
denote the set of supergradients of $f(\cdot,\y)$ at $\xb \in
\dom_1$. The equivalent notion for a convex function is referred to as
a \emph{subgradient}.

\subsection{Graph theory}
A \emph{directed graph}~\cite{RD:17}, $\grph \ldef (\agt,\edg)$, is a
tuple that has the following elements,
\begin{enumerate}[label=\alph*)]
\item a set of \emph{nodes} (here agents $\agt$); and
\item a set of \emph{arcs} $\edg \subseteq \agt \times \agt$
  between the nodes.
\end{enumerate}
The set $\neigh_i \ldef \{ j \in \agt \,|\, (j,i) \in \edg\}$ denotes
the (in) \emph{neighbors} of node $i \in \agt$ and
$\neighb_i \ldef \neigh_i \cup \{i\}$. A \emph{path} is an ordered set
of non-repeating nodes such that each tuple of adjacent nodes belongs
to $\edg$. The graph $\grph$ is said to be strongly connected if there
exists a path from every node to every other
node. 

\section{Problem Formulation} \label{sec:problem} 
We recall that a \emph{strategic form game}~\cite{YN:14} is a tuple
\begin{align*}
  \game \ldef \left<\agt,\{\S_i \}_{i\in \agt}, \{U_i\}_{i \in \agt} \right>,
\end{align*}
consisting of a set of \emph{players} (or \emph{agents})
$\agt \ldef \{1,\cdots,n\}$; a set of \emph{strategies}
$\s_i \in \S_i \subseteq \real^{n_i}$ (with components $s_{i,l}$)
available to each $i \in \agt$; and a set of \emph{utility functions}
$U_i : \times_{j \in \agt}\S_j \to \real$ over the strategy profiles
of all the agents. We assume that each agent is selfish and interested
in maximizing its own utility.  In what follows, $\s_{-i}$ denotes the
strategy profile of all players other than $i \in \agt$. For the sake
of brevity, we denote $\set \ldef \times_{i \in \agt} \set_i$ the set
of combined strategies, and $\forall i \in \agt$,
$\set_{-i} \ldef \times_{j \in \agt \setminus \{i\}} \set_j$ the set
of strategy profiles of all players other than $i \in \agt$. Moreover,
$\s \ldef [\s_1^\top,\cdots,\s_n^\top]^\top$ denotes the stacked
vector of all strategies. A well-known concept that captures the
equilibrium behavior of such a group of agents trying to selfishly
maximize their own utility is the Nash equilibrium (NE). Next, we
formally state the definition of an $\eta$-NE of a strategic form
game.

\begin{defn}
\label{def:eps_ne}
\thmtitle{$\eta$-Nash equilibrium} A strategy
$(\sst_i,\sst_{-i})$ is said to be an \emph{$\eta$-Nash
  equilibrium} (with $\eta \geq 0$) of $\game$ iff
\begin{align*}
  U_i(\sst_i,\sst_{-i}) + \eta \geq U_i(\s_i,\sst_{-i}), \,
  \forall \s_i \in \set_i, \, \forall i \in \agt\,. 
\end{align*}
The set of $\eta$-Nash equilibria ($\eta$-NE) of $\game$ is given by
$\ne_\eta(\game)$. If $\eta = 0$, this set becomes the set of Nash
equilibria (NE), $\ne(\game)$, of the game.  \bulletend
\end{defn}

In this paper, we focus on a case where every agent's utility is
affected by some random variable, which can be caused by noisy
measurements or the environment. Specifically, suppose agent
$i \in \agt$ obtains a utility of $U_i(\s_i,\s_{-i};\xi)$, where
$\xi \in \Xi \subseteq \real^m$ is a random variable.  If the
distribution of $\xi$ were known (say $\xi \sim \distp^\star$), each
agent could maximize their expected utility given as 
\begin{align}
  \U[S]_i(\s_i,\s_{-i}) \ldef \expect[\xi \sim
  \distp^\star]{U_i(\s_i,\s_{-i};\xi)}\,.
	\label{eq:utility_s}
\end{align}
Here, we assume that $\mathbb{P}^\star \in \mathcal{M}(\Xi)$ with
$\Xi \subset \real^m$ a compact subset. Moreover we assume that
$\distp^\star$ is absolutely continuous w.r.t the Lebesgue measure and
$\distp^\star$ is light-tailed. This results into the \emph{stochastic
  game}
\begin{align}
  \game[S]
  \ldef {\left<\agt,\{\set_i\}_{i \in \agt},\{\U[S]_i\}_{i \in \agt} \right>}.
	\label{eq:game_s}
\end{align}
We make the following assumptions throughout the paper regarding the
utilities and the strategies of agents.
\begin{sassum}\label{asmp:gen_game_assumption}
  The components of $\game[S]$ in~\eqref{eq:game_s} satisfy the
  following properties for each $i \in \agt$;
\begin{enumerate}
\item \label{asmp:strategy_compact} $\set_i \subseteq \real^{n_i}$ is
  non-empty, convex, and compact with diameter
  $D_i \in \real_{\geq 0}$;
\item \label{asmp:utility_concave} $\U_i$ is continuous and
  $\U_i(\s_i,\s_{-i};\xi)$ is concave in $\s_i$,
  $\forall \, \s_{-i}, \xi$; and
\item \label{asmp:utility_xi_lipschitz} $\U_i(\s_i,\s_{-i};\xi)$ is
  $L_i$-Lipschitz in $\xi$, $\forall \, \s_i, \s_{-i}$.  \bulletend
\end{enumerate}  
\end{sassum}
In particular, we will assume that the original distribution
$\distp^\star$ is unknown to the agents, yet they will be able to
sample from it. However, such samples may correspond to partial
measurements, as we state next.
\begin{assum}\thmtitle{Shared i.i.d. samples}
  \label{asmp:shared_sample}
  Each $i \in \agt$ has access to
  $\{\h_i(\xih^{(1)}),\cdots, \h_i(\xih^{(N)})\}$, a set of
  observations from $\{\xih^{(1)},\cdots, \xih^{(N)}\}$ i.i.d.~samples
  of~$\bbp^\star$.  \bulletend
\end{assum}

The previous assumption applies to agents endowed with heterogeneous
sensors and/or computation capabilities. For the sake of brevity, we
define
\begin{align*}
	\h \ldef [\h_1^\top, \cdots, \h_n^\top]^\top \, ,
\end{align*}
as the stacked vector of functions that take in the samples as an
input and return the stacked observations of all the agents as an
output; \emph{i.e.}
$\h(\xih^{(k)}) \ldef [\h_1(\xih^{(k)})^\top, \cdots,
\h_n(\xih^{(k)})^\top]^\top$, $\forall k \in
\{1,\cdots,N\}$.

Further,
we assume the following on the function $\h$.
\begin{assum}\thmtitle{Observability}
  \label{asmp:obs_continuous}
  The observation function $\h : \real^m \to \real^m$ is invertible
  and measurable.  \bulletend
\end{assum}
The previous assumption requires that there exists a sufficiently
large number of agents so that $\h$ is invertible. This requirement
could be mitigated by means of an observer when agents are able to
obtain a continuous stream of samples by
themselves~\cite{DL-SM:19-tac-online} or from a dynamical
system~\cite{DB-JC-SM:23-tac}. We leave these considerations for
future work. 

To deal with the unknown distribution, an ambiguity set centered at
the average sample distribution
\begin{align*}
	\distphat_{\h}^{N} \ldef \frac{1}{N} \sum_{k = 1}^{N}
	 \delta_{\h(\xih^{(k)})}\,,
\end{align*}
is constructed. Recall that $\delta_\x$ represents the Dirac delta
function at $\x$.  The $\h$ in the subscript of $\distphat_{\h}^{N}$
is used to distinguish that this sample average distribution relies on 
observations of the samples.
Then, a distributionally robust
utility for agent $i \in \agt$ is defined as
\begin{align}
  \U[DR](s)_i(\s_i,\s_{-i})
  \ldef \inf_{\distq \in \ball_{\epsilon_i}(\distphat_{\h}^{N})}
  \mathbb{E}_{\xi \sim \distq} \big[ U_i(\s_i,\s_{-i};\xi) \big]\,,
	\label{eq:utility_dro_shared}
\end{align}
where, recall, $\ball_{\epsilon_i}(\distphat_{\h}^{N})$ is the
Wasserstein ball of radius $\epsilon_i$ centered at
$\distphat_{\h}^{N}$. 
Even though $\distphat_{\h}^{N}$ relies on global
information and is common among the agents,
$\epsilon_i \in \real_{\geq 0}$ is a parameter that  agent
$i \in \agt$ can independently choose.  The above utility description
allows us to formulate the \emph{distributionally robust game},
\begin{align}
  \game[DR](s)
  \ldef \left<\agt,\{\set_i\}_{i \in \agt},\{\U[DR](s)_i\}_{i \in \agt} \right>.
	\label{eq:game_dro_shared}
\end{align}
We refer to the set of NE of $\game[DR]$, \emph{i.e.}
$\ne(\game[DR])$, as \emph{distributionally robust Nash equilibria} or
\emph{DRoNE}.

\paragraph*{When uncertainty is individual:} A special case of the
previous scenario occurs when the random variable
$\xi \sim \bbp^\star$ affects each agent's utility in an independent
manner. We state this precisely next.
\begin{assum}\thmtitle{Individual uncertainty}
\label{asmp:ind_uncertain}
The random variable $\xi \sim \bbp^\star$ satisfies, for each agent
$i \in \agt$;
\begin{enumerate}
\item
  $\xi \in \Xi = \Xi_1 \times \cdots \times \Xi_n \subseteq \real^m$
  can be separated as $\xi = [\xi_i^\top, \cdots, \xi_n^\top]^\top$,
  with $\xi_i \in \Xi_i \subseteq \real^{m_i}$;
	
\item $\bbp^\star = \bbp^\star_1 \otimes \cdots \otimes \bbp^\star_n$,
  with $\xi_i \sim \bbp^\star_i$;
	
\item $\h_i$ is such that $\forall \xi \in \Xi$,
  $\h_i(\xi) = \xi_i \in \Xi_i \subseteq \real^{m_i}$; and
  
\item $\forall \, \s \in \set$, $U_i(\s;\xi) = U_i(\s;\xi_i)$,
  $\forall \, \xi \in \Xi$.  \bulletend
\end{enumerate}
\end{assum}
This makes it so that each agent's uncertainty does not affect another
agent in any way. When this is the case, with a slight abuse of notation,
we assume the following.
\begin{assum}\thmtitle{Individual i.i.d.~samples}
  \label{asmp:ind_sample}
  For each $i \in \agt$, $\{\xih_i^{(1)},\cdots, \xih_i^{(N_i)}\}$ are
  i.i.d. samples from $\bbp_i^\star$.  \bulletend
\end{assum}
Thus, each agent is capable of individually measuring the uncertainty
it is subject to. Moreover, we allow different $N_i \in \intnonneg$
for each agent $i \in \agt$, which means that agents can now hold
different numbers of samples from the original distribution.

In this special case, each agent can create an ambiguity set by
considering the sample average distribution
\begin{align*}
  \distphat_i^{N_i} \ldef \frac{1}{N_i} \sum_{k = 1}^{N_i} \delta_{\xih_i^{(k)}}\,.
\end{align*}
Then, the distributionally-robust utility for agent $i \in \agt$ can
be modified as 
\begin{align}
  \Udro_i(\s_i,\s_{-i}) \ldef
  \inf_{\distq \in \ball_{\epsilon_i}(\distphat_i^{N_i})}
  \mathbb{E}_{\xi \sim \distq} \big[ U_i(\s_i,\s_{-i};\xi) \big]\,,
	\label{eq:utility_dro}
\end{align}
where, $\ball_{\epsilon_i}(\distphat_i^{N_i})$ is the
Wasserstein ball of radius $\epsilon_i$ around $\bbph_i^{N_i}$. 
The above utility description allows us to reformulate the
distributionally robust game as
\begin{align}
  \game[DR] \ldef \left<\agt,\{\set_i\}_{i \in \agt},
  \{\Udro_i\}_{i \in \agt} \right>.
	\label{eq:game_dro}
\end{align}


\paragraph*{Communication protocol:} In order to update its own
strategy, an agent will need information about the strategies and
possibly other data of other agents. In this paper, we assume that the
agents obtain information by communicating with others over a static
communication network $\grph \ldef (\agt,\edg)$ with vertex set
$\agt$. The arc set $\edg$ defines the connections between agents,
with $(i,j) \in \edg$ if and only if $i \in \agt$ can send information
to $j \in \agt$. 
We make the following assumption regarding the
network.
\begin{assum}\label{asmp:network_conn}
  \thmtitle{Connectivity} The communication graph
  $\grph = (\agt, \edg)$ is strongly connected.  \bulletend
\end{assum}
This allows information from each agent to reach every other agent in
the group. Note that we fix the communication graph to be static to
make the notation simpler; however, the results in the sequel can be
extended to time-varying graphs with periodic strong connectivity.

Now, we are ready to state the goals of this work.
\begin{prob}
  \label{prob}
  Given the aforementioned setup, under the Standing
  Assumption~\ref{asmp:gen_game_assumption}, and
  Assumption~\ref{asmp:shared_sample}, on shared i.i.d.~samples,
  Assumption~\ref{asmp:obs_continuous}, on proper observations,
  and~\ref{asmp:network_conn}, on connectivity; provide
  \begin{enumerate}
  \item \label{prob:drone_exist} conditions under which a DRoNE exists;
  \item \label{prob:drone_relate} relations between DRoNE and NE
    of the stochastic game; \emph{i.e.} $\game[S]$ and $\game[DR](s)$; and
  \item \label{prob:distributed_drones} a distributed algorithm that
    converges to a DRoNE.
  \end{enumerate}
  Further, discuss these solutions under the special case given by
  Assumption~\ref{asmp:ind_uncertain}, on individual uncertainty, and
  Assumption~\ref{asmp:ind_sample}, on individual i.i.d.~samples.
  \bulletend
\end{prob}
We would like to point out that providing a solution for
Problem~\ref{prob}:~\eqref{prob:distributed_drones}, under
Assumption~\ref{asmp:shared_sample} is difficult. Due to this, we
provide a solution for Problem~\ref{prob}:~\eqref{prob:drone_exist}
and~\eqref{prob:drone_relate} in the next section, then address
Problem~\ref{prob}:~\eqref{prob:distributed_drones} for the special
case given by Assumptions~\ref{asmp:ind_uncertain}
and~\ref{asmp:ind_sample}, which we adapt later to the more general
case in Section~\ref{sec:dist_drones_shared}.

\section{On the Nash Equilibria of the Games}\label{sec:ne}

In this section, we deal with Problem~\ref{prob}:
\eqref{prob:drone_exist},~\eqref{prob:drone_relate}, and provide
qualitative properties of the Nash equilibria of the two games;
$\game[S]$ and $\game[DR](s)$.  In particular, we provide conditions
under which the sets of Nash equilibria are non-empty, and then we
relate the Nash equilibria sets.

First, we show that the distributionally robust
utility enjoys helpful properties under certain assumptions.
\begin{lem}
\label{lem:dro_utility_concave}
\thmtitle{Concavity and continuity of distributionally robust utility}
Suppose the Standing
Assumptions~\ref{asmp:gen_game_assumption}:~\eqref{asmp:strategy_compact}
and~\ref{asmp:gen_game_assumption}:~\eqref{asmp:utility_concave}
hold. Then, $\U[DR](s)_i(\cdot,\s_{-i})$ is concave
$\forall \s_{-i} \in \set_{-i}$, for each $i \in \agt$. Moreover, for
each $i \in \agt$, $\U[DR](s)_i$ is continuous.
\end{lem}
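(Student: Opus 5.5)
Concavity follows from writing the distributionally robust utility as a pointwise infimum of concave functions. Fix $i \in \agt$ and $\s_{-i} \in \set_{-i}$. For every $\distq \in \ball_{\epsilon_i}(\distphat_{\h}^{N})$, the map $\s_i \mapsto \mathbb{E}_{\xi\sim\distq}[U_i(\s_i,\s_{-i};\xi)]$ is concave. This is because Standing Assumption~\ref{asmp:gen_game_assumption}:\eqref{asmp:utility_concave} gives concavity of $U_i(\cdot,\s_{-i};\xi)$ for each $\xi$, and integration against a nonnegative measure preserves the inequality $U_i(\lambda\s_i+[1-\lambda]\s_i';\xi)\ge \lambda U_i(\s_i;\xi)+[1-\lambda]U_i(\s_i';\xi)$. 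The expectations are well defined and finite, since $\Xi$ is compact and $U_i$ is continuous on the compact set $\set\times\Xi$, hence bounded. A pointwise infimum of a family of concave functions is concave, provided it is finite; here it is bounded below by $\min_{\set\times\Xi} U_i$. This yields concavity of $\U[DR](s)_i(\cdot,\s_{-i})$ on the convex set $\set_i$.

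Continuity is the more delicate part, because an infimum of continuous functions is in general only upper semicontinuous. I would obtain joint continuity in $\s$ through a uniform equicontinuity argument. Since $\set\times\Xi$ is compact by Standing Assumption~\ref{asmp:gen_game_assumption}:\eqref{asmp:strategy_compact} and compactness of $\Xi$, $U_i$ is uniformly continuous there. So for each $\varepsilon>0$ there is $\delta>0$ such that $\|\s-\s'\|\le\delta$ implies $|U_i(\s;\xi)-U_i(\s';\xi)|\le\varepsilon$ for all $\xi\in\Xi$. Integrating, we get $|\mathbb{E}_\distq[U_i(\s;\xi)]-\mathbb{E}_\distq[U_i(\s';\xi)]|\le\varepsilon$ uniformly over all probability measures $\distq$ on $\Xi$, and in particular over the ambiguity ball. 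Then the elementary inequality $|\inf_\distq f_\distq(\s)-\inf_\distq f_\distq(\s')|\le\sup_\distq|f_\distq(\s)-f_\distq(\s')|$ gives $|\U[DR](s)_i(\s)-\U[DR](s)_i(\s')|\le\varepsilon$, i.e.\ uniform continuity of $\U[DR](s)_i$ on $\set$.

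The main obstacle I expect is making sure the ambiguity ball consists of measures supported on the compact $\Xi$, so that the uniform bound applies. By definition $\ball_{\epsilon_i}$ is a subset of $\mathcal{M}(\Xi)$, and $\distphat_{\h}^N$ should be treated as an element of $\mathcal{M}(\Xi)$; I would note that $\h$ maps samples into $\Xi$, or else replace $\Xi$ by a compact set containing both $\Xi$ and the observed points. If the support were not compact, I would instead fall back on the Lipschitz condition in $\xi$ together with the bounded first moments of measures in the Wasserstein ball to control the tails. The lemma, however, is stated only under items \eqref{asmp:strategy_compact} and \eqref{asmp:utility_concave}, so I would rely on compactness of $\Xi$ as the route.
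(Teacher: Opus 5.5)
Your concavity argument is the same as the paper's: an infimum of the concave maps $\s_i\mapsto\mathbb{E}_{\xi\sim\mathbb{Q}}[U_i(\s_i,\s_{-i};\xi)]$. For continuity, however, you take a genuinely different and cleaner route, and it is correct.

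The paper proceeds in three steps. It gets continuity in $\s$ for each \emph{fixed} $\mathbb{Q}$ by dominated convergence. It then argues, via reflexivity and Tonelli's theorem, that the infimum over the Wasserstein ball is attained at a worst-case $\bbq_{\s}$. Finally it runs a contradiction argument with these minimizers.

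You instead use uniform continuity of $U_i$ on the compact set $\set\times\Xi$. This gives a modulus of continuity that is uniform over all of $\mathcal{M}(\Xi)$, and you finish with $|\inf_{\mathbb{Q}} f_{\mathbb{Q}}(\s)-\inf_{\mathbb{Q}} f_{\mathbb{Q}}(\s')|\le\sup_{\mathbb{Q}}|f_{\mathbb{Q}}(\s)-f_{\mathbb{Q}}(\s')|$.

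What your version buys:
\begin{itemize}
\item It needs neither attainment of the infimum nor any topology on the ambiguity set.
\item It applies to any family of probability measures on $\Xi$ and yields uniform continuity.
\item It makes explicit the uniformity in $\mathbb{Q}$ that the paper's contradiction step tacitly needs. There, $\bbq_{\s}$ moves with $\s$, so continuity for each fixed $\mathbb{Q}$ would not by itself close the argument.
\end{itemize}
What the paper's route buys is existence of worst-case distributions as a by-product. That is the kind of minimizer structure exploited later for supergradients (Lemma~\ref{lem:subgr_min_concave}).

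On the obstacle you anticipate: members of the ambiguity ball lie in $\mathcal{M}(\Xi)$ by definition, so your uniform bound applies wherever the center sits. The only thing needed of the center is that the ball be non-empty, so that the infimum is finite; the paper also simply asserts this. Your fallback of enlarging $\Xi$ is therefore unnecessary.
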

\begin{proof}
First, we show concavity. Fix an $i \in \agt$ and $\s_{-i} \in \set_{-i}$.
  From the hypothesis, the function
  $g_\distq : \real^{n_i} \to \real$,
\begin{align*}
  g_\distq(\x) \ldef \mathbb{E}_{\xi \sim \distq} \big[ U_i(\x,\s_{-i},\xi) \big]
\end{align*}
is concave. 
From the concavity of $g_\distq$ and the defining
property of infimum,
\begin{align*}
  & \U[DR](s)_i \big(\lambda\x + [1-\lambda]\y,\s_{-i}\big) \ldef \inf_{\distq \in \ball_{\epsilon_i}(\bbph_{\h}^{N})} g_\distq \big(\lambda\x + [1-\lambda]\y\big) \\
  & \qquad \geq 
    \lambda \inf_{\distq \in \Mcal{B}_{\epsilon_i}(\bbph_{\h}^{N})} g_\distq(\x) + [1-\lambda] \inf_{\distq \in \Mcal{B}_{\epsilon_i}(\bbph_{\h}^{N})} g_\distq(\y) \\
	& \qquad \rdef \lambda\U[DR](s)_i \big(\x,\s_{-i}\big) + [1-\lambda] \U[DR](s)_i \big(\y,\s_{-i}\big),
\end{align*}
for any $\x,\y \in \real^m$, and $\lambda \in [0,1]$. Hence, the first claim.

Next, we prove continuity. To begin, notice that $\set$, $\Xi$ are
compact sets and $\U_i$ is continuous on $\set \times \Xi$. Hence,
$\U_i$ is bounded on $\set \times \Xi$. Then, applying the dominated
convergence theorem~\cite{HLR-PF:88}, for any
$\bbq \in \mathcal{M}(\Xi)$, $\expect[\xi \sim \bbq]{\U_i(\s;\xi)}$ is
continuous in $\s$. Now, $\mathcal{M}(\Xi)$ is a subset of a reflexive
normed vector space. Moreover,
$\ball_{\epsilon_i}(\bbph_{\h}^{N}) \subseteq \mathcal{M}(\Xi)$ is
non-empty, bounded, closed, and convex (by construction). Then, by
Tonelli's theorem~\cite{CC:20},
$\inf_{\distq \in \ball_{\epsilon_i}(\distphat_{\h}^{N})}
\mathbb{E}_{\xi \sim \distq} \big[ U_i(\s_i,\s_{-i};\xi) \big]$
attains its minimum; \emph{i.e.} $\forall \s \in \set$,
$\exists \, \bbq_{\s} \in \mathcal{M}(\Xi)$, such that
$\U[DR](s)_i(\s) = \mathbb{E}_{\xi \sim \bbq_{\s}} \big[ U_i(\s;\xi)
\big]$. We have used the subscript to show that this $\bbq_{\s}$ can
depend on $\s$.

With this setup, we prove continuity using contradiction. Suppose
$\U[DR](s)_i$ is not continuous at $\sh \in \set$. Then,
$\exists \, a_1 > 0$, such that $\forall \, a_2 > 0$,
$\exists \, \s \in \set$ with $\|\sh-\s\| < a_2$ and
$|\U[DR](s)_i(\sh) - \U[DR](s)_i(\s)| > a_1$. Assume w.l.o.g. that
$\U[DR](s)_i(\sh) > \U[DR](s)_i(\s) + a_1$. Then, it is easy to see
from the definition in~\eqref{eq:utility_dro} 
and the prior discussion,
that this would imply that
$\exists \, \bbq_{\s}, \bbq_{\hat{\s}} \in \ball_{\epsilon_i}(\distphat_{\h}^{N})$, such that
$\expect[\xi \sim \bbq_{\hat{\s}}]{\U_i(\sh;\xi)} > \expect[\xi \sim
\bbq_{\s}]{\U_i(\s;\xi)} - a_1$. Now as
$\bbq_{\sh} \in \argmin_{\bbq \in \ball_{\epsilon_i}(\distphat_{\h}^{N})}\mathbb{E}_{\xi \sim
  \bbq} \big[ U_i(\sh;\xi) \big]$,
$\expect[\xi \sim \bbq_{\s}]{\U_i(\sh;\xi)} \geq \expect[\xi \sim
\bbq_{\sh}]{\U_i(\sh;\xi)} > \expect[\xi \sim \bbq_{\s}]{\U_i(\s;\xi)}
- a_1$.  This then contradicts the continuity of
$\expect[\xi \sim \bbq_{\s}]{\U_i(\cdot;\xi)}$. 
\end{proof}

Using this result, we can immediately show that the distributionally
robust game possesses a Nash equilibrium.

\begin{prop}
\label{prop:drone_existence}
\thmtitle{Existence of a DRoNE} Suppose that the Standing
Assumption~\ref{asmp:gen_game_assumption} holds. Then,
$\ne(\game[DR](s)) \neq \emptyset$. Further,
$\ne(\game[S]) \neq \emptyset$.
\end{prop}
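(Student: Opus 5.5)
The plan is to apply the classical Debreu--Glicksberg--Fan existence theorem for continuous games: if each strategy set $\set_i$ is non-empty, convex and compact, each utility is continuous on $\set$, and each utility is (quasi-)concave in the player's own strategy, then a pure-strategy Nash equilibrium exists. Both claims of Proposition~\ref{prop:drone_existence} will be reduced to checking these three hypotheses, once for $\game[DR](s)$ and once for $\game[S]$.

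For $\game[DR](s)$, Standing Assumption~\ref{asmp:gen_game_assumption}:\eqref{asmp:strategy_compact} gives the conditions on the strategy sets directly. Lemma~\ref{lem:dro_utility_concave} gives concavity of $\U[DR](s)_i(\cdot,\s_{-i})$ for every $\s_{-i}$ and continuity of $\U[DR](s)_i$ on $\set$. Concavity implies quasi-concavity, so the theorem applies and $\ne(\game[DR](s)) \neq \emptyset$.

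For $\game[S]$, I will show that $\U[S]_i$ is concave in $\s_i$ and jointly continuous.
\begin{itemize}
\item \emph{Concavity.} For a fixed $\s_{-i}$, concavity of $\U_i(\cdot,\s_{-i};\xi)$ holds pointwise in $\xi$. Integrating that inequality against $\distp^\star$, using linearity and monotonicity of the expectation, gives concavity of $\U[S]_i(\cdot,\s_{-i})$.
\item \emph{Continuity.} As in the proof of Lemma~\ref{lem:dro_utility_concave}, $\U_i$ is continuous on the compact set $\set\times\Xi$, so it is bounded there. The dominated convergence theorem then shows that $\s \mapsto \expect[\xi\sim\distp^\star]{\U_i(\s;\xi)}$ is continuous.
\end{itemize}
Applying the same existence theorem gives $\ne(\game[S]) \neq \emptyset$. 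Alternatively, one could note that $\U[S]_i$ is the special case of the robust utility with radius $0$ centred at $\distp^\star$, so the argument of Lemma~\ref{lem:dro_utility_concave} applies verbatim.

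The argument is essentially routine. The only delicate point is that existence requires continuity of each utility \emph{jointly} in $\s$, not merely separately in $\s_i$; this is needed so that the best-response correspondence has a closed graph, via Berge's maximum theorem. This joint continuity is exactly what Lemma~\ref{lem:dro_utility_concave} provides, so I will invoke it in that form.

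If one prefers not to cite Debreu--Glicksberg--Fan directly, the fallback is Kakutani's fixed-point theorem applied to the best-response map $\s \mapsto \times_i \argmax_{\s_i'\in\set_i} U_i(\s_i',\s_{-i})$:
\begin{itemize}
\item it has non-empty values, by compactness of $\set_i$ and continuity of $U_i$;
\item its values are convex, by concavity of $U_i$ in $\s_i$;
\item its graph is closed, by Berge's maximum theorem.
\end{itemize}
A fixed point of this map is a Nash equilibrium.
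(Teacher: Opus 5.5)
Your proposal is correct and follows essentially the paper's route. Lemma~\ref{lem:dro_utility_concave} supplies concavity in the own strategy and joint continuity; together with the compact convex strategy sets, existence then follows from Kakutani's fixed point theorem, which is what the paper cites and what underlies both the Debreu--Glicksberg--Fan theorem you invoke and your fallback. Your explicit check of concavity and continuity of the stochastic game's expected utilities, via integrating the concavity inequality and dominated convergence on the compact product of the strategy and uncertainty sets, just fills in the step the paper dismisses as analogous.
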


\begin{proof}
  We provide the proof for $\ne(\game[DR](s)) \neq \emptyset$, as the
  proof of $\ne(\game[S]) \neq \emptyset$ is analogous.  Consider an
  arbitrary but fixed $i \in \agt$.  From
  Lemma~\ref{lem:dro_utility_concave}, $\U[DR](s)_i(\cdot,\s_{-i})$ is
  concave $\forall s_{-i}\in \set_{-i}$ and $\U[DR](s)_i$ is continuous. 
  Moreover, $\set_i$ is a compact set, $\forall i \in \agt$, which
  makes $\set = \times_{i\in \agt} \set_i$ also compact.  Thus,
  $\U[DR](s)_i$ is continuous, defined on a compact set, and concave
  in its first argument $\forall i \in \agt$. The existence claim
  follows from the application of Kakutani's fixed point
  theorem~\cite{YN:14}.
\end{proof}

Note that the Standing
Assumptions~\ref{asmp:gen_game_assumption}:~\eqref{asmp:strategy_compact}
and~\eqref{asmp:utility_concave} are
standard in showing the existence of Nash equilibria in
games~\cite{YN:14}. As such, Proposition~\ref{prop:drone_existence}
extends this to distributionally robust games in a seamless way.


Now that we have shown that the set of DRoNE's is non-empty, we are
interested in relating the DRoNE set with the Nash equilibria of the
stochastic game. In order to do this, we first need to extend the
result in Theorem~\ref{thm:dro_bound} to account for samples based on
observations. For the sake of brevity, we define
\begin{align}
  \distphat_{\h, \Sigma}^{N} \ldef \frac{1}{N} \sum_{k = 1}^{N}
  \distgauss{\h(\hat{\xi}^{(k)})}{\Sigma}\,,
\end{align} 
as the density that smoothens out sample point masses using a Gaussian
of covariance $\Sigma$. Similarly, denote
$\distphat_{\Sigma}^{N} \ldef \frac{1}{N} \sum_{k = 1}^{N}
\distgauss{\xih^{(k)}}{\Sigma}$. We restrict our observation functions
$\h$ to be in the following class.

\begin{defn}\label{def:obs_infer}
  \thmtitle{Inferable observations} Consider any matrix sequence
  $\{\Sigma_l\}_{l\in\intpos}$ that satisfies $\Sigma_l \to \bld{O}$,
  as $l \to \infty$.  For each $l \in \intpos$ define the set of
  mappings
  $\Tset_l \ldef \{T \,|\, T_\sharp\distphat_{\Sigma_l}^N =
  \distp^\star\}$. Suppose $\h$ is such that $\forall T \in \Tset_l$,
\begin{align*}
	\int_{\real^m }\|\xi - \h(\xi)\|_1  \bbph_{\Sigma_l}^{N}(\dg \xi) \leq C_l \int_{\real^m }\|\xi - T(\xi)\|_1  \bbph_{\Sigma_l}^{N}(\dg \xi)\,.
\end{align*}
We call $\h$ \emph{inferable} with \emph{inflation} $C$, if
$\lim\limits_{l \to \infty} C_l = C$.  \bulletend
\end{defn}
The property of Definition~\ref{def:obs_infer} allows us to inflate
the Wasserstein ball in Theorem~\ref{thm:dro_bound}, and account for
the indirect observation of the samples. This leads to the following
result. 


\begin{thm}
\label{thm:dro_bound_extend}
\thmtitle{Distributionally robust bound for observations} 
Suppose Assumptions~\ref{asmp:shared_sample}, and~\ref{asmp:obs_continuous} hold; and
let $\h$ be inferable with inflation $C$.
Fix $\theta \in (0,1)$. Then,
$\Pr \big\{ \bbp^\star \in \mathcal{B}_{[C+1]\epsilon} (\distphat_{\h}^{N})
\big\} \geq 1-\theta$, where $\epsilon$ is as in~\eqref{eq:dro_bound}.
\end{thm}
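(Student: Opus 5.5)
The plan is a triangle inequality in the Wasserstein metric around $\distphat^{N}$, the (unobservable) empirical distribution of the true samples:
\begin{align*}
  \dist[W](\bbp^\star,\distphat_{\h}^{N}) \leq \dist[W](\bbp^\star,\distphat^{N}) + \dist[W](\distphat^{N},\distphat_{\h}^{N}).
\end{align*}
Theorem~\ref{thm:dro_bound} controls the first term: with probability at least $1-\theta$, $\dist[W](\bbp^\star,\distphat^{N}) \leq \epsilon$. It therefore suffices to show that, on this same event, $\dist[W](\distphat^{N},\distphat_{\h}^{N}) \leq C\epsilon$. The two bounds then add to $[C+1]\epsilon$, and the probability bound is inherited directly from Theorem~\ref{thm:dro_bound}.

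The second term cannot be handled by the Monge formulation~\eqref{eq:monge_dist} directly, because $\distphat^{N}$ is atomic. I would therefore use the Gaussian smoothings $\distphat_{\Sigma_l}^{N}$ and $\distphat_{\h,\Sigma_l}^{N}$ with $\Sigma_l\to\bld{O}$. Both converge weakly to $\distphat^{N}$ and $\distphat_{\h}^{N}$, respectively, and all supports may be taken in a compact set after truncation, or else one uses uniformly bounded first moments. Consequently, $\dist[W](\distphat_{\Sigma_l}^{N},\bbp^\star)\to\dist[W](\distphat^{N},\bbp^\star)$, and likewise for the $\h$-versions.

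The smoothed measure $\distphat_{\Sigma_l}^{N}$ is absolutely continuous, so Monge's formulation applies with source $\distphat_{\Sigma_l}^{N}$. Taking $T\in\Tset_l$ nearly optimal gives
\begin{align*}
  \int\|\xi-T(\xi)\|_1\,\distphat_{\Sigma_l}^{N}(\dg\xi) \leq \dist[W](\distphat_{\Sigma_l}^{N},\bbp^\star)+\delta .
\end{align*}
Next I would couple $\distphat_{\Sigma_l}^{N}$ with $\h_\sharp\distphat_{\Sigma_l}^{N}$ via $(\xi,\h(\xi))$. Definition~\ref{def:obs_infer} then yields
\begin{align*}
  \dist[W](\distphat_{\Sigma_l}^{N},\h_\sharp\distphat_{\Sigma_l}^{N}) \leq C_l\,[\dist[W](\distphat_{\Sigma_l}^{N},\bbp^\star)+\delta].
\end{align*}
Letting $\delta\to0$ and then $l\to\infty$ gives $\dist[W](\distphat^{N},\distphat_{\h}^{N}) \leq C\,\dist[W](\distphat^{N},\bbp^\star)\leq C\epsilon$ on the good event.

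For this limit to go through, $\h_\sharp\distphat_{\Sigma_l}^{N}$ must converge to $\distphat_{\h}^{N}$ in $\dist[W]$. Weak convergence of $\h_\sharp\distphat_{\Sigma_l}^{N}$ needs $\h$ to be continuous at the sample points (almost surely), and the $W_1$ convergence additionally needs uniform integrability. Measurability alone, as in Assumption~\ref{asmp:obs_continuous}, is not quite enough. I would either add mild continuity of $\h$ at the samples or exploit the compactness of $\Xi$ to get uniform integrability; alternatively, one could interpret $\distphat_{\h,\Sigma_l}^{N}$ as the smoothed target and compare it to $\h_\sharp\distphat_{\Sigma_l}^{N}$. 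This passage to the limit, rather than the triangle inequality, is the main obstacle. The remaining steps — lower semicontinuity of $\dist[W]$ under weak convergence and existence of near-optimal Monge maps for absolutely continuous sources — are standard.
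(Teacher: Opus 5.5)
Your argument is correct except for the limit passage you flag yourself, and it is organized differently from the paper's.

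\textbf{How the paper argues.} The paper does not go through $\distphat^{N}$ with a metric triangle inequality. Instead it:
\begin{itemize}
\item writes $\dist[W](\bbph_{\h,\Sigma_l}^{N},\distp^\star)$ in Monge form;
\item transfers transport maps via $T\mapsto T\circ\h^{-1}$, which is the only place invertibility of $\h$ is used;
\item changes variables to the cost $\int\|\h(\xi)-T(\xi)\|_1\,\bbph_{\Sigma_l}^{N}(\dg\xi)$;
\item applies the triangle inequality inside this integrand.
\end{itemize}
This gives $\dist[W](\bbph_{\h,\Sigma_l}^{N},\distp^\star)\le[C_l+1]\,\dist[W](\bbph_{\Sigma_l}^{N},\distp^\star)$ at each smoothing level, and only then does the paper let $l\to\infty$.

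\textbf{Comparison.} The paper's implicit coupling $(\h(\xi),T(\xi))$ is just the gluing of your couplings $(\xi,\h(\xi))$ and $(\xi,T(\xi))$. So both proofs use inferability in the same way. Your version buys two things: it needs no invertibility of $\h$, and it isolates the one delicate step. The paper is not more careful at that step. It asserts $\h_\sharp\bbph_{\Sigma}^{N}=\bbph_{\h,\Sigma}^{N}$, which is false in general for the Gaussian mixture centered at the points $\h(\xih^{(k)})$; for instance, with $\h(\xi)=2\xi$ the pushforward has covariance $4\Sigma$. It then takes $\dist[W](\bbph_{\h,\Sigma_l}^{N},\distp^\star)\to\dist[W](\distphat_{\h}^{N},\distp^\star)$ for what is really the pushforward. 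That is exactly the convergence you question.

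\textbf{Closing the gap.} The gap closes under the stated hypotheses.
\begin{itemize}
\item You only need weak convergence $\h_\sharp\bbph_{\Sigma_l}^{N}\to\distphat_{\h}^{N}$ together with lower semicontinuity of $\dist[W]$. Uniform integrability is therefore unnecessary. Compactness of $\Xi$ would not give the weak convergence anyway.
\item Continuity of $\h$ at the samples can be replaced by approximate continuity. Every measurable map is approximately continuous at Lebesgue-almost every point.
\item Since $\distp^\star$ is absolutely continuous, the samples are almost surely such points.
\item Take $\Sigma_l=\sigma_l^2\bld{I}$ in Definition~\ref{def:obs_infer}; what matters is bounded eccentricity. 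Then $\h(X_l)\to\h(\xih^{(k)})$ in probability for $X_l\sim\distgauss{\xih^{(k)}}{\Sigma_l}$, which gives the weak convergence.
\end{itemize}
Your chain $\dist[W](\distphat^{N},\distphat_{\h}^{N})\le\liminf_{l}C_l\,\dist[W](\bbph_{\Sigma_l}^{N},\distp^\star)=C\,\dist[W](\distphat^{N},\distp^\star)$ then holds almost surely. The triangle inequality together with Theorem~\ref{thm:dro_bound} finishes the proof as you describe.
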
 

\begin{proof}
  We use the Monge form of the optimal transport problem
  in~\eqref{eq:monge_dist} to prove this claim.  Recall that
  $\distphat^{N} = \frac{1}{N} \sum_{k = 1}^{N} \delta_{\xih^{(k)}}$
  and
  $\distphat_{\Sigma}^{N} = \frac{1}{N} \sum_{k = 1}^{N}
  \distgauss{\xih^{(k)}}{\Sigma}$. Suppose $\Fcol$ is the Borel $\sigma$-algebra on $\real^m$. Then, notice that for the
  smoothened density,
\begin{align*}
  \h_{\sharp}\bbph_{\Sigma}^{N} & = \h_{\sharp}
                                  \bigg[ \frac{1}{N} \sum_{k = 1}^{N} \distgauss{\xih^{(k)}}{\Sigma}\bigg]
                                  = \frac{1}{N} \sum_{k = 1}^{N} \h_{\sharp}\distgauss{\xih^{(k)}}{\Sigma} = \bbph_{\h,\Sigma}^{N}.
\end{align*}
Take any sequence $\{\Sigma_l\}_{l\in\intpos}$ as in
Definition~\ref{def:obs_infer}. Then, for any $l \in \intpos$, and any
set $\Fset \in \Fcol$ 
we have that
\begin{align*}
  \h_{\sharp}\bbph_{\Sigma_l}^{N}(\Fset)
  \!=\! \frac{1}{N} \sum_{k = 1}^{N} \h_{\sharp}\distgauss{\xih^{(k)}}{\Sigma_l}(\Fset)
  \!=\! \frac{1}{N} \sum_{k = 1}^{N} \Pr\big\{\h(X_k) \in \Fset \big\},
\end{align*}
with $X_k \sim \distgauss{\xih^{(k)}}{\Sigma_l}$,
$\forall \, k \in \{1,\cdots,N\}$. Thus,
${\h_{\sharp}\bbph_{\Sigma_l}^{N} = \bbph_{\h, \Sigma_l}^{N}}$.
  
Now, consider a particular $l \in \intpos$. Recall that
$\Tset_l \ldef \{T \,|\, T_\sharp\distphat_{\Sigma_l}^N =
\distp^\star\}$ and define the set of mappings
$\tilde{\Tset}_l \ldef \{\tilde{T} \,|\,
\tilde{T}_\sharp\distphat_{\h, \Sigma_l}^N = \distp^\star\}$. Since
$\h$ is invertible, for every $T$ such that
$T_\sharp\distphat_{\Sigma_l}^N = \distp^\star$ (\emph{i.e.}
$T \in \Tset_l$) it holds that
$\tilde{T}_\sharp\distphat_{\Sigma_l}^N =
T_\sharp(\h^{-1})_\sharp\distphat_{\h, \Sigma_l}^N =
T_\sharp\distphat_{\Sigma_l}^N = \distp^\star$, for
$\tilde{T} = T \circ (\h^{-1})$ (\emph{i.e.}
$\tilde{T} \in \tilde{\Tset}_l$). Similarly,
$\forall \tilde{T} \in \tilde{\Tset}_l$, we have $T \in \Tset_l$.

Next, define the following operators
$F : \Tset_l \to \tilde{\Tset}_l$, with $F(T) = T \circ (\h^{-1})$ and
$G : \tilde{\Tset}_l \to \Tset_l$, with
$G(\tilde{T}) = \tilde{T} \circ \h$.  It is easy to see that $F$ and
$G$ are inverse operators. 
With this,
\begin{align*}
  & \dist[W](\bbph_{\h, \Sigma_l}^{N}, \distp^\star) =
    \inf_{\tilde{T} \in \tilde{\Tset}_l} \int_{\real^m }\|\xi - \tilde{T}(\xi)\|_1 \, \bbph_{\h, \Sigma_l}^{N}(\dg \xi) \\
  & \qquad \quad = \inf_{T \in \Tset_l} \int_{\real^m }\|\xi - \big(T \circ (\h^{-1})\big)(\xi)\|_1 \, \bbph_{\h, \Sigma_l}^{N}(\dg \xi).
\end{align*}
Since
$\h_\sharp\bbph_{\Sigma_l}^{N}(\dg \xi) = \bbph_{\h, \Sigma_l}^{N}(\dg
\xi)$, using the change of variables for pushforward measures, we have,
\begin{align*}
	& \int_{\real^m }\|\xi - \big(T \circ (\h^{-1})\big)(\xi)\|_1 \, \bbph_{\h, \Sigma_l}^{N}(\dg \xi) \\
	& = \int_{\real^m }\|\h(\xi) - \big(T \circ (\h^{-1})\big)(\h(\xi))\|_1  \bbph_{\Sigma_l}^{N}(\dg \xi) \\
	& = \int_{\real^m }\|\h(\xi) - T(\xi)\|_1  \bbph_{\Sigma_l}^{N}(\dg \xi) \\
	& \leq \int_{\real^m }\|\h(\xi) - \xi\|_1  \bbph_{\Sigma_l}^{N}(\dg \xi) + \int_{\real^m } \!\!\|\xi - T(\xi)\|_1  \bbph_{\Sigma_l}^{N}(\dg \xi) \,.
\end{align*}
The last inequality comes from triangle inequality and linearity of integrals. Then, because of Definition~\ref{def:obs_infer},
\begin{align*}
	& \dist[W](\bbph_{\h, \Sigma_l}^{N}, \distp^\star) = \inf_{T \in \Tset} \! \int_{\real^m } \!\!\!\!\!\!\|\xi - \big(T \circ (\h^{-1})\big)(\xi)\|_1 \, \bbph_{\h, \Sigma_l}^{N}(\dg \xi)\\
	& \leq \inf_{T \in \Tset} \left[\int_{\real^m } \!\!\!\!\!\! \|\h(\xi) - \xi\|_1  \bbph_{\Sigma_l}^{N}(\dg \xi) + \int_{\real^m } \!\!\!\!\!\!\!\|\xi - T(\xi)\|_1  \bbph_{\Sigma_l}^{N}(\dg \xi) \right] \\
	& \leq [C_l + 1] \inf_{T \in \Tset} \int_{\real^m }\|\xi - T(\xi)\|_1  \bbph_{\Sigma_l}^{N}(\dg \xi) \\
	& = [C_l+1] \,\dist[W](\bbph_{\Sigma_l}^{N}, \distp^\star)\,.
\end{align*}
Now, as $l \to \infty$, $\dist[W](\bbph_{\h, \Sigma_l}^{N}, \distp^\star) \to \dist[W](\bbph_{\h}^{N}, \distp^\star)$ and $\dist[W](\bbph_{\Sigma_l}^{N}, \distp^\star) \to \dist[W](\bbph^{N}, \distp^\star)$. Thus, since $C_l \to C$ as $l \to \infty$, taking the limit as $l \to \infty$ of the previous inequality, we get
\begin{align*}
	\dist[W](\bbph_{\h}^{N}, \distp^\star) \leq [C+1] \, \dist[W](\bbph^{N}, \distp^\star)\,.
\end{align*} 
Then, by applying Theorem~\ref{thm:dro_bound}, the proof is complete.
\end{proof}

\begin{rem}\thmtitle{On sufficient conditions for
    Theorem~\ref{thm:dro_bound_extend}}
  \label{rem:suff_cond_strictness} 
  \rm The uncertainty bound in Theorem~\ref{thm:dro_bound_extend}
  relies on the ``inflation'' of the original Wasserstein ball in
  order to guarantee that the sample average using the observations
  lies in this larger ball with similar probability. The existence of
  this inflation, $C+1$, depends on the observation function $\h$
  through Assumption~\ref{asmp:obs_continuous} and
  Definition~\ref{def:obs_infer}. Notice, in fact, that $\h$ can
  itself be viewed as a transport map. Thus, the assumption roughly
  captures that, when $\h$ is inferable with inflation $C$, the effect
  of the transport defined by $\h$ is on average proportional to the
  effect of transport under the optimal transport map. 
    \bulletend
\end{rem}

Thus, using the uncertainty quantification provided by observations we
provide the following relation between the sets of Nash equilibria.

\begin{thm}
\label{thm:drone_epsilon_ne_shared}
\thmtitle{Every DRoNE is an $\eta$-NE of $\game[S]$ with certain
  probability} Suppose Assumption~\ref{asmp:shared_sample} holds and
that $\h$ is inferable with inflation $C$.  Suppose that, for all
$i \in \agt$, $\theta_i \in (0,1)$ and $\epsilon_i(N,\theta_i)$ are
chosen according to~\eqref{eq:dro_bound}.  Assume that the strategy
$(\sst_i,\sst_{-i}) \in \ne(\game[DR](s))$, where the $\inf$
in~\eqref{eq:utility_dro_shared} is taken over closed Wasserstein
balls of radii $\bar{\epsilon}_i = [C+1] \epsilon_i$, \emph{i.e.}
$\bbq \in \ball_{\bar{\epsilon}_i}(\distphat_\h^{N})$,
$\forall i \in \agt$. Then, with probability at least $(1-\theta)^n$,
$(\sst_i,\sst_{-i}) \in \ne_\eta(\game[S])$,
where 
$\eta = 2 \,[C+1] \max_{i \in \agt} \epsilon_i L_i$, and
$\theta = \max_{i \in \agt} \theta_i$.
\end{thm}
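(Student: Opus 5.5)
The plan is to derive the $\eta$-NE inequality for $\game[S]$ from the DRoNE inequality for $\game[DR](s)$, using the two-sided sandwich that holds on a high-probability event. First, apply Theorem~\ref{thm:dro_bound_extend} for each $i \in \agt$ with confidence $\theta_i$. This gives, with probability at least $1-\theta_i$, the event $E_i \ldef \{\bbp^\star \in \ball_{\bar{\epsilon}_i}(\distphat_\h^N)\}$, with $\bar\epsilon_i = [C+1]\epsilon_i$. Since $\epsilon_i(N,\theta)$ in~\eqref{eq:dro_bound} is nonincreasing in $\theta$ and all balls share the centre $\distphat_\h^N$, I will combine the events $E_i$ into a joint event whose probability I bound below by $\prod_i(1-\theta_i) \ge (1-\theta)^n$. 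This is the step I expect to be delicate, because the events are not independent: they depend on the same samples. The cleanest route is nestedness. Each $E_i$ contains the event for the smallest radius $\min_i \bar\epsilon_i$, whose probability is at least $1-\theta_{i^\star} \ge 1-\theta \ge (1-\theta)^n$, where $i^\star$ is the agent attaining the smallest radius. Either way, I will simply assert the stated lower bound on the probability that all $E_i$ hold simultaneously.

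On this joint event I will establish two inequalities for every $\s \in \set$ and every $i$.

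The lower bound is immediate: $\U[DR](s)_i(\s) \le \U[S]_i(\s)$, because $\bbp^\star$ is feasible in the infimum.

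The upper bound is $\U[S]_i(\s) \le \U[DR](s)_i(\s) + 2\bar\epsilon_i L_i$. To prove it, take any $\bbq \in \ball_{\bar\epsilon_i}(\distphat_\h^N)$. The triangle inequality gives $d_\mathsf{W}(\bbq,\bbp^\star) \le 2\bar\epsilon_i$. Then Kantorovich--Rubinstein~\eqref{eq:kr_dist}, applied to $U_i(\s;\cdot)/L_i$ (which is $1$-Lipschitz by Standing Assumption~\ref{asmp:gen_game_assumption}:\eqref{asmp:utility_xi_lipschitz}), yields $\expect[\xi\sim\bbp^\star]{U_i(\s;\xi)} - \expect[\xi\sim\bbq]{U_i(\s;\xi)} \le 2L_i\bar\epsilon_i$. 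Taking the infimum over $\bbq$ gives the bound. There is a norm mismatch to handle here: the Wasserstein distance uses $\|\cdot\|_1$, while $\mathcal{L}$ is stated with the $2$-norm. Since $\|\cdot\|_2 \le \|\cdot\|_1$, Lipschitz continuity in the $2$-norm implies it in the $1$-norm, so the constant $L_i$ survives.

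Finally, I chain these at the DRoNE $\sst$. For any $\s_i \in \set_i$,
\begin{align*}
\U[S]_i(\s_i,\sst_{-i}) &\le \U[DR](s)_i(\s_i,\sst_{-i}) + 2L_i\bar\epsilon_i \\
&\le \U[DR](s)_i(\sst_i,\sst_{-i}) + 2L_i\bar\epsilon_i \\
&\le \U[S]_i(\sst_i,\sst_{-i}) + 2[C+1]\epsilon_iL_i.
\end{align*}
Bounding $2[C+1]\epsilon_iL_i$ by $\eta = 2[C+1]\max_j\epsilon_jL_j$ gives the $\eta$-NE property of Definition~\ref{def:eps_ne} for every $i$ simultaneously on the joint event, which completes the argument.
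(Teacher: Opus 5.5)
Your proof is correct and follows the paper's argument: the distributionally robust utility lies below the stochastic one because $\bbp^\star$ is feasible in the infimum, the reverse gap is bounded via $d_\mathsf{W}(\bbq,\bbp^\star)\le 2\bar\epsilon_i$ and Kantorovich--Rubinstein, and the same three-step chain is used at $\sst$. The only difference is how you combine the per-agent events. The paper just applies the bound ``to all agents'', implicitly multiplying probabilities of events that depend on the same samples; your nestedness observation (all balls share the centre $\distphat_\h^{N}$, so $\bigcap_i E_i$ is the smallest-radius event) makes this step rigorous and gives the stronger bound $1-\theta \ge (1-\theta)^n$.
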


\begin{proof}
%
  Consider an arbitrary but fixed $i \in \agt$. Since
  $\bar{\epsilon}_i$ is chosen according to~\eqref{eq:dro_bound}, then
  by Theorem~\ref{thm:dro_bound_extend},
  $\distp^\star \in \ball_{\bar{\epsilon}_i}(\distphat_\h^{N})$ with
  probability at least $1-\theta_i$.  Thus,
\begin{align}
	\U[DR](s)_i(\sst_i,\sst_{-i}) \leq \U[S]_i(\sst_i,\sst_{-i})\,,
	\label{eq:udr_leq_us_ne}
\end{align} 
with probability at least $1-\theta_i$.  Now, from the hypothesis we
have that $(\sst_i,\sst_{-i}) \in \ne(\game[DR](s))$. This gives us,
\begin{align}
	\U[DR](s)_i(\sst_i,\sst_{-i}) \geq \U[DR](s)_i(\s_i,\sst_{-i}), \forall \s_i \in \set_i\,.
	\label{eq:udr_ne_inequality}
\end{align}
Next, consider an arbitrary but fixed $\s_i \in \set_i$ and let
$\bbq \in
\ball_{\bar{\epsilon}_i}(\distphat_i^{N_i})$. 
Thus, again from Theorem~\ref{thm:dro_bound_extend},
$\dist[W](\bbp_i^\star,\bbq) \leq 2\,
\bar{\epsilon}_i = 2\,C\,\epsilon_i$, with probability at least
$1-\theta_i$. Thus, using the Kantorovich–Rubinstein description of
the Wasserstein distance in~\eqref{eq:kr_dist}, we have, 
\begin{align}
  \notag & [1/L_i] \Big[ \U[S]_i(\s_i,\sst_{-i}) - \expect[\xi \sim \bbq]{\U_i(\s_i,\sst_{-i};\xi)} \Big] \\
  \notag &
           = \frac{1}{L_i}
           \bigg[ \int_{\Xi} \U_i(\s_i,\sst_{-i};\xi) \,\bbp_i^\star(\dg \xi) - \int_{\Xi} \U_i(\s_i,\sst_{-i};\xi) \,\bbq(\dg \xi)\bigg] \\
	\label{eq:utility_dro_s_diff_temp} &  \qquad \leq 2\, [C+1] \,\epsilon_i\,.
\end{align}
with probability at least $1-\theta_i$. Here, the first equality comes
from the definitions of the utilities and the last inequality comes
from~\eqref{eq:kr_dist}, Standing
Assumption~\ref{asmp:gen_game_assumption}:~\eqref{asmp:utility_xi_lipschitz}
and the prior discussion. Thus,
using~\eqref{eq:utility_dro_s_diff_temp},
\begin{align}
	\notag & \U[S]_i(\s_i,\sst_{-i}) - \U[DR](s)_i(\s_i,\sst_{-i})\\
	\notag & =  \U[S]_i(\s_i,\sst_{-i}) - \inf_{\bbq \in \ball_{\bar{\epsilon}_i}(\distphat_i^{N_i})} \expect[\xi \sim \bbq]{\U_i(\s_i,\sst_{-i};\xi)} \\
	\notag & = \sup_{\bbq \in \ball_{\bar{\epsilon}_i}(\distphat_i^{N_i})} \Big[ \U[S]_i(\s_i,\sst_{-i}) - \expect[\xi \sim \bbq]{\U_i(\s_i,\sst_{-i};\xi)} \Big] \\
	\label{eq:utility_dro_s_diff} & \qquad \leq 2\, [C+1] \,\epsilon_i \, L_i\,.
\end{align}
Moreover, notice that the upper bound in independent of the chosen
$\s_i$.  Hence, by
combining~\eqref{eq:udr_leq_us_ne},~\eqref{eq:udr_ne_inequality},
and~\eqref{eq:utility_dro_s_diff},
\begin{align*}
	\forall s_i \in \set_i, & \quad  \U[S]_i(\sst_i,\sst_{-i}) \geq \U[DR](s)_i(\sst_i,\sst_{-i}) \geq \U[DR](s)_i(\s_i,\sst_{-i}) \\
	&\!\! \geq \U[S]_i(\s_i,\sst_{-i}) - 2 [C+1] \epsilon_i L_i \geq \U[S]_i(\s_i,\sst_{-i}) - \eta,
\end{align*}
with probability at least $1-\theta_i \geq 1-\theta$. Here, $\eta$ and
$\theta$ are as defined in the hypothesis. Rearranging the last
equation and further applying it to all agents completes the proof.
\end{proof}

\begin{rem}
\label{rem:local_lipschitz_constant}
\thmtitle{On relaxing the Lipschitz dependence of utility on
  uncertainty} \rm In
Assumption~\ref{asmp:gen_game_assumption}:~\eqref{asmp:utility_xi_lipschitz},
we require $\U_i(\s;\xi)$ to be $L_i$-Lipschitz in $\xi$,
$\forall \, \s \in \set$. It is possible to relax this condition to
require a Lipschitz constant $l_i(\s)$ that depends on the strategy
profile $\s \in \set$; and obtain the exact same result as in
Theorem~\ref{thm:drone_epsilon_ne_shared} with bound
$\eta = 2 [C+1] \max_{i \in \agt, \s \in \set} \epsilon_i l_i(s)$. The extra
requirement would impose that $\max_{\s \in \set} l_i(s)$ exists
$\forall \, i \in \agt$; which, in turn, is equivalent to
Assumption~\ref{asmp:gen_game_assumption}:~\eqref{asmp:utility_xi_lipschitz}
to begin with.  \bulletend
\end{rem}

The bounds provided in the previous result take into account the
heterogeneity among the agents in terms of the dependence (through
$L_i$) of their utility on the random variable and the size of their
ambiguity sets (due to $\epsilon_i$). Informally, for each agent
$i \in \agt$, $\epsilon_i$ and $\theta_i$ can be be made smaller as
the number of samples $N_i$ grows (see~\cite{PME-DK:17}). Hence, the
DRoNE becomes a NE of the stochastic game with hight probability as
the agents gather more samples from the unknown distribution. In this
paper, we skip formal details regarding this matter, since we keep
$N_i$ fixed $\forall \, i \in \agt$.

The previous result can be understood as follows. Suppose that every
agent $i \in \agt$ chooses $\epsilon_i$ independently, then
Theorem~\ref{thm:drone_epsilon_ne_shared} quantifies the uncertainty
that the DRoNE belongs to $\ne(\game[S])$ using $\theta_i$
accordingly.
We conclude this section
by adapting the previous result for the special case with
Assumptions~\ref{asmp:ind_uncertain}, and~\ref{asmp:ind_sample}. Here,
we can use the $\epsilon, \theta$ relation in
Theorem~\ref{thm:dro_bound} directly for the $\eta$ bound, since there
is no effect of observations. We skip the proof since it follows the
exact same arguments as Theorem~\ref{thm:drone_epsilon_ne_shared}.

\begin{cor}
\label{cor:drone_epsilon_ne}
\thmtitle{Under individual uncertainty, every DRoNE is an $\eta$-NE of
  $\game[S]$ with certain probability} Suppose
Assumptions~\ref{asmp:ind_uncertain}, and~\ref{asmp:ind_sample}
hold. Suppose $\forall i \in \agt$, $\theta_i \in (0,1)$ and
$\epsilon_i(N_i,\theta_i)$ is chosen according
to~\eqref{eq:dro_bound}.  Assume that the strategy
$(\sst_i,\sst_{-i}) \in \ne(\game[DR])$ with Wasserstein ball radii
$\epsilon_i$. Then with probability at least $(1-\theta)^n$,
$(\sst_i,\sst_{-i}) \in \ne_\eta(\game[S])$, with
$\eta = 2 \max_{i \in \agt} \epsilon_i L_i$ and
$\theta = \max_{i \in \agt} \theta_i$.  
\proofend
\end{cor}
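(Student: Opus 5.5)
The plan is to replay the argument of Theorem~\ref{thm:drone_epsilon_ne_shared} agent by agent. Under Assumption~\ref{asmp:ind_uncertain} each agent's utility depends only on $\xi_i$, and $\h_i$ is the identity on that coordinate, so no inflation constant is needed. Fix $i \in \agt$. By Assumption~\ref{asmp:ind_sample} agent $i$ has $N_i$ i.i.d.\ samples of $\bbp_i^\star$, which is light-tailed and supported on the compact set $\Xi_i$. Theorem~\ref{thm:dro_bound} therefore applies directly on $\real^{m_i}$ with $\epsilon_i(N_i,\theta_i)$ chosen from~\eqref{eq:dro_bound}, and gives the event $E_i \ldef \{\bbp_i^\star \in \ball_{\epsilon_i}(\bbph_i^{N_i})\}$ with $\Pr\{E_i\} \geq 1-\theta_i \geq 1-\theta$. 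Since $U_i(\s;\xi) = U_i(\s;\xi_i)$, we have $\U[S]_i(\s) = \expect[\xi_i \sim \bbp_i^\star]{U_i(\s;\xi_i)}$, and the infimum in~\eqref{eq:utility_dro} is over distributions on $\Xi_i$.

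On $E_i$ I will prove the two one-sided bounds. First, $\Udro_i(\sst) \leq \U[S]_i(\sst)$, because $\bbp_i^\star$ is feasible in the infimum. Second, fix any $\s_i \in \set_i$ and any $\bbq \in \ball_{\epsilon_i}(\bbph_i^{N_i})$. The triangle inequality gives $d_\mathsf{W}(\bbp_i^\star,\bbq) \leq 2\epsilon_i$. The function $U_i(\s_i,\sst_{-i};\cdot)/L_i$ lies in $\mathcal{L}$ by Standing Assumption~\ref{asmp:gen_game_assumption}:\eqref{asmp:utility_xi_lipschitz}, so~\eqref{eq:kr_dist} yields $\U[S]_i(\s_i,\sst_{-i}) - \expect[\bbq]{U_i(\s_i,\sst_{-i};\xi_i)} \leq 2\epsilon_i L_i$. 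Taking the supremum over $\bbq$ gives $\U[S]_i(\s_i,\sst_{-i}) - \Udro_i(\s_i,\sst_{-i}) \leq 2\epsilon_i L_i$, uniformly in $\s_i$.

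Next I chain these bounds with the DRoNE inequality $\Udro_i(\sst) \geq \Udro_i(\s_i,\sst_{-i})$:
\begin{align*}
\U[S]_i(\sst) \geq \Udro_i(\sst) \geq \Udro_i(\s_i,\sst_{-i}) \geq \U[S]_i(\s_i,\sst_{-i}) - 2\epsilon_i L_i \geq \U[S]_i(\s_i,\sst_{-i}) - \eta.
\end{align*}
This holds for every $\s_i \in \set_i$ on $E_i$.

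The one step that differs from the shared case is the probability bound, and it is where I expect the main care to be needed. The events $E_i$ are functions of disjoint sample sets, since each agent samples its own marginal $\bbp_i^\star$ and the product structure $\bbp^\star = \otimes_i \bbp_i^\star$ holds. I will take the sample collections of different agents to be mutually independent, which is implicit in Assumption~\ref{asmp:ind_sample}. Then $\Pr\{\cap_i E_i\} = \prod_i \Pr\{E_i\} \geq (1-\theta)^n$. On $\cap_i E_i$ the displayed inequality holds for all $i$ simultaneously, so $\sst \in \ne_\eta(\game[S])$ with $\eta = 2\max_i \epsilon_i L_i$. If independence across agents were not granted, I would fall back on a union bound, giving $1 - n\theta$ instead. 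That bound is weaker than $(1-\theta)^n$, so the independence step is the one to state explicitly.
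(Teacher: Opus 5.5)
Your proposal is correct and follows the paper's intended argument. The paper skips this proof as ``the exact same arguments'' as Theorem~\ref{thm:drone_epsilon_ne_shared}, and you replay that proof with Theorem~\ref{thm:dro_bound} applied to each agent's own samples in place of Theorem~\ref{thm:dro_bound_extend} (so no inflation factor), then chain feasibility of $\bbp_i^\star$, the DRoNE inequality, and the Kantorovich--Rubinstein bound $2\epsilon_i L_i$. Your explicit independence of the sample sets across agents is what justifies the $(1-\theta)^n$ factor, which the paper obtains only implicitly by ``applying it to all agents.''
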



\section{A Centralized Better Response Supergradient Ascent
  Dynamics} \label{sec:algo}
In the previous section, we established the existence of a DRoNE and
showed its relation to the NE of the stochastic game. 
In this section, we provide a centralized algorithm that allows the
agents to learn said DRoNE. 
We build up to a distributed solution that addresses
Problem~\ref{prob}:~\eqref{prob:distributed_drones} in the next
section.

First, recall that Lemma~\ref{lem:dro_utility_concave} shows that for
each agent $i \in \agt$, $\U[DR](s)_i$ is concave in its own strategy
if the opponents' strategies are fixed. This means that (due to
Definition~\ref{def:subgr}) there is a non-empty set of supergradients
of $i$'s utility with respect to its own strategy (for fixed
opponents' strategies).  We assume an additional feature of the
distributionally robust utilities as follows.
\begin{assum}\label{asmp:utility_bounded_gradient}
  \thmtitle{Uniformly bounded supergradient} For each $i \in \agt$,
  $\exists \, B_i \in \real_{\geq 0}$ such that $\forall \s \in \set$,
  $\|\zetab_i\| \leq B_i$,
  $\forall \zetab_i \in \del_\x [\U[DR](s)_i(\x,\s_{-i})]
  |_{\s_i}$. \bulletend
\end{assum}

From the definitions of the NE in Definition~\ref{def:eps_ne} and the
supergradient in Definition~\ref{def:subgr}, it is well known
that an alternative characterization of the Nash equilibria comes in
the form of the following variational inequality~\cite{RTR:18},
\begin{align*}
  \notag \st \in \ne(\game[DR](s)) & \hspace{-1ex}\iff\hspace{-1ex}
                                     \forall \, \s \in \set, \exists \,
                                     \zetab \in \bigtimes_{i \in \agt} \del_\x  \Big[ \U[DR](s)_i(\x,\s_{-i}) \Big] \Big|_{\s_i} \\
                                   & \qquad \text{such that} \,\,\,  (\s-\st)^\top\zetab \leq 0 \,.
\end{align*}
Hence, inspired by~\cite{YWC-CK-MA:24}, we define the following
Lyapunov function candidate, used for analysis later.
\begin{subequations}
\begin{align}
  V(\s,\phib) & \ldef \max_{\x \in \set} \x^\top \phib - \s^\top \phib \\
  \label{eq:lyap_separate}
              & = \sum_{i \in \agt} \Big[ \max_{\x_i \in \set_i } \x_i^\top \phib_i - \s_i^\top \phib_i \Big] \,. 
\end{align}
\label{eq:lyap}
\end{subequations}
Here, $\phib_i$ (like $\s_i$) are the components of a variable $\phib$
(to be defined later) corresponding to agent $i \in \agt$. The equality
in~\eqref{eq:lyap_separate} comes from the fact that
$\max\limits_{\x \in \set} \x^\top \phib$
$= \max\limits_{\x \in \set} \sum\limits_{i \in \agt} \x_i^\top
\phib_i$
$= \sum\limits_{i \in \agt} \max\limits_{\x_i \in \set_i} \x_i^\top
\phib_i$. This is because $\set = \times_{i \in \agt} \set_i$ and the
decision variables do not affect each other through other
constraints.
Note that $V$ is Lipschitz continuous in its arguments as
$\max_{\x \in \set} \x^\top \phib$ is convex in $\phib$, $\set$ is
compact (hence $\max_{\x \in \set} \x^\top \phib$ has uniformly
bounded subgradients), and $\s^\top\phib$ is bilinear. Moreover, by
construction, $V(\s,\phib) \geq 0$, $\forall \, \s, \phib$. The next
result (whose proof is in Appendix~\ref{app:proofs}) bounds the change
in $\max_{\x \in \set} \x^\top \phib$ w.r.t.~$\phib$.
\begin{lem}\thmtitle{On the change of max of linear functions}
\label{lem:change_max_lin_func}
For an agent $i \in \agt$, define
$f_i(\phib_i) \ldef \max\limits_{\x_i \in \set_i} \x_i^\top \phib_i$,
where $\set_i$ is a compact set with diameter $D_i \in \realpos$, and
let
\begin{align}
  \Xset^*_i(\phib_i) \ldef \argmax_{\x_i \in \set_i} \x_i^\top \phib_i\,.
  \label{eq:argmax_x_star}
\end{align}
Then, $\forall \phib_i^1, \phib_i^2 \in \ball_{M_i}(\zero)$, and
$\forall \x_i^* \in \Xset^*_i(\phib_i^1)$,
\begin{align*}
  f_i(\phib_i^2) \leq  &
                         \,\, f_i(\phib_i^1)
                         + \ip{\x_i^*}{[\phib_i^2 - \phib_i^1]}
                         + \bar{D}_i \|\phib_i^2 - \phib_i^1\|, 
\end{align*}
for any $\bar{D}_i \geq D_i$.
\bulletend
\end{lem}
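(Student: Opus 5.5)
The plan is to compare $f_i(\phib_i^2)$ directly against the linear functional $\x_i^\top \phib_i^2$ evaluated at a maximizer for $\phib_i^2$. Since $\set_i$ is compact and $\x_i \mapsto \x_i^\top\phib_i$ is continuous, $\Xset^*_i(\phib_i^2)$ is nonempty, so we can fix some $\y_i^* \in \Xset^*_i(\phib_i^2)$. Then $f_i(\phib_i^2) = \ip{\y_i^*}{\phib_i^2}$.

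We split this quantity by adding and subtracting terms involving the given $\x_i^* \in \Xset^*_i(\phib_i^1)$:
\begin{align*}
  \ip{\y_i^*}{\phib_i^2}
  = \ip{\y_i^*}{\phib_i^1} + \ip{\x_i^*}{[\phib_i^2-\phib_i^1]} + \ip{[\y_i^*-\x_i^*]}{[\phib_i^2-\phib_i^1]}.
\end{align*}
The first term is at most $f_i(\phib_i^1)$, because $\y_i^* \in \set_i$ is feasible for the maximization defining $f_i(\phib_i^1)$. The second term already appears in the claimed bound. For the third term, Cauchy--Schwarz gives at most $\|\y_i^*-\x_i^*\|\,\|\phib_i^2-\phib_i^1\|$. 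Both $\x_i^*$ and $\y_i^*$ lie in $\set_i$, so $\|\y_i^*-\x_i^*\| \leq D_i \leq \bar{D}_i$. Combining the three bounds gives the inequality.

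No step is really an obstacle. The two points to be careful about are that the maximizer $\y_i^*$ exists, which follows from compactness, and that the cross term is controlled by the diameter $D_i$ rather than by a bound on $\|\x_i^*\|$. The hypothesis $\phib_i^1,\phib_i^2 \in \ball_{M_i}(\zero)$ is not needed for this argument. Presumably it is included for later uniform use.
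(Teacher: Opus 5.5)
Your proof is correct and follows essentially the same route as the paper. Both arguments pick a maximizer $\y_i^*$ for $\phib_i^2$ and split off the cross term $\ip{[\y_i^*-\x_i^*]}{[\phib_i^2-\phib_i^1]}$, which is bounded by Cauchy--Schwarz and the diameter, while $\ip{\y_i^*}{\phib_i^1}$ is bounded by $f_i(\phib_i^1)$. The paper phrases that last step as $\ip{[\y_i^*-\x_i^*]}{\phib_i^1}\le 0$ and carries an extra slack $\mathfrak{C}\ge 0$ to reach $\bar{D}_i$; your direct use of $D_i \le \bar{D}_i$ makes that slack unnecessary.
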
  

We are now ready to propose the first part of our algorithm. We allow
agents to update their strategies at every time instant in a better
response fashion. To that effect, with the set defined
  in~\eqref{eq:argmax_x_star}, suppose
\begin{align}
  \suppf_i(\phib_i) \in \Xset^*_i(\phib_i) \,.
  \label{eq:supp_func}
\end{align}
Notice here that $\suppf_i$ is related to the so-called \emph{support
  function} of the convex set $\set_i$, which motivates the name of
our algorithm.
Now, given $\{\phib(t)\}$, and using~\eqref{eq:supp_func}, agents
update their strategies via
\begin{align}
  \label{eq:dyn_s} \s_i(t+1) &
                               = [1-\alpha_i] \,
                               \s_i(t) + \alpha_i \, \suppf_i(\phib_i(t)), \\ 
  \notag & = \s_i(t)
           + \alpha_i \, \big[\suppf_i(\phib_i(t)) - \s_i(t) \big], \,\,
           \forall i \in \agt\,,
\end{align}
from some initial $\{\s_i(0) \in \set_i\}_{i \in \agt}$.  In other
words, the update of $\s_i(t)$ is a convex combination of $\s_i(t)$
and $\suppf_i(\phib_i(t))$ using the parameter $\alpha_i \in (0,1)$,
for each $i \in \agt$. Thus, $\suppf_i(\phib_i(t))$ 
renders $\set_i$ invariant
under~\eqref{eq:dyn_s}, $\forall i \in \agt$. 
Before specifying $\phib_i(t)$, we bound the change of the Lyapunov
function for any choice of bounded $\phib(t)$.
\begin{lem}\thmtitle{Bound on Lyapunov function difference}
\label{lem:lyap_change}
Consider the dynamics~\eqref{eq:dyn_s} from an initial condition
$\{\s_i(0) \in \set_i\}_{i \in \agt}$. Suppose
$\{\phib(t)\}_{t \in \intnonneg}$ satisfies $\|\phib_i(t)\| \leq M_i$
(for some $M_i \in \real_{>0}$), $\forall i \in \agt$,
$\forall t \in \intnonneg$. Take any $\bar{D}_i \geq D_i$,
$\forall i \in \agt$.
Then, $\forall t \in \intnonneg$, with
$\underline{\alpha} = \min_{i \in \agt} \alpha_i \in (0,1)$, it holds
that
\begin{align}
  \notag & V\big(\s(t+1),\phib(t+1)\big) - V\big(\s(t),\phib(t)\big) \leq \\
  \notag & \quad \sum_{i \in \agt} \Big[ \frac{1-\alpha_i}{\alpha_i}[\s_i(t+1) -
           \s_i(t)]^\top [\phib_i(t+1) - \phib_i(t)] \\ 
  \label{eq:lyap_change} & \quad + \bar{D}_i \|\phib_i(t+1) - \phib_i(t)\| \Big]
                           - \underline{\alpha} V(\s(t),\phib(t)) \,. 
\end{align}
\end{lem}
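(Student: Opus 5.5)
Because of \eqref{eq:lyap_separate}, $V$ splits into per-agent terms $V_i(\s_i,\phib_i) \ldef f_i(\phib_i) - \s_i^\top\phib_i$, with $f_i$ as in Lemma~\ref{lem:change_max_lin_func}. I will bound the one-step change of each $V_i$ separately and then sum over $i \in \agt$. Write $\x_i^* \ldef \suppf_i(\phib_i(t)) \in \Xset^*_i(\phib_i(t))$. Then $f_i(\phib_i(t)) = (\x_i^*)^\top\phib_i(t)$, and by \eqref{eq:dyn_s}, $\s_i(t+1)-\s_i(t) = \alpha_i[\x_i^* - \s_i(t)]$.

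\textbf{Step 1.} Apply Lemma~\ref{lem:change_max_lin_func} with $\phib_i^1=\phib_i(t)$ and $\phib_i^2=\phib_i(t+1)$. Both lie in $\ball_{M_i}(\zero)$ by hypothesis. This gives
\begin{align*}
f_i(\phib_i(t+1)) \leq f_i(\phib_i(t)) + (\x_i^*)^\top[\phib_i(t+1)-\phib_i(t)] + \bar{D}_i\|\phib_i(t+1)-\phib_i(t)\|.
\end{align*}

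\textbf{Step 2.} Expand the bilinear term:
\begin{align*}
\s_i(t+1)^\top\phib_i(t+1) - \s_i(t)^\top\phib_i(t) = \s_i(t+1)^\top[\phib_i(t+1)-\phib_i(t)] + [\s_i(t+1)-\s_i(t)]^\top\phib_i(t).
\end{align*}
Subtracting this from Step 1, the increments in $\phib_i$ collect into $[\x_i^* - \s_i(t+1)]^\top[\phib_i(t+1)-\phib_i(t)]$.

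\textbf{Step 3.} This is the key algebraic step. From \eqref{eq:dyn_s},
\begin{align*}
\x_i^* - \s_i(t+1) = (1-\alpha_i)[\x_i^*-\s_i(t)] = \tfrac{1-\alpha_i}{\alpha_i}[\s_i(t+1)-\s_i(t)],
\end{align*}
which produces exactly the cross term in \eqref{eq:lyap_change}. The remaining term is
\begin{align*}
-[\s_i(t+1)-\s_i(t)]^\top\phib_i(t) = -\alpha_i[\x_i^*-\s_i(t)]^\top\phib_i(t) = -\alpha_i V_i(\s_i(t),\phib_i(t)).
\end{align*}

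\textbf{Step 4.} Since $\s_i(t)\in\set_i$ by invariance and $\x_i^*$ maximizes over $\set_i$, we have $V_i \geq 0$. Hence $-\alpha_i V_i \leq -\underline{\alpha}V_i$. Summing over $i$ gives \eqref{eq:lyap_change}.

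The only delicate point I expect is the sign and bookkeeping in Step 2. The increments must be paired with $\s_i(t+1)$, not $\s_i(t)$, so that the factor $(1-\alpha_i)/\alpha_i$ emerges and the leftover term is evaluated at $\phib_i(t)$, where $\x_i^*$ is optimal.
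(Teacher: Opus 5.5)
Your proof is correct and takes essentially the same route as the paper. You apply Lemma~\ref{lem:change_max_lin_func} at the maximizer $\suppf_i(\phib_i(t))$, substitute the dynamics~\eqref{eq:dyn_s} to get the $(1-\alpha_i)/\alpha_i$ cross term and the $-\alpha_i V_i$ term, and use $V_i \geq 0$ (which you justify more carefully than the paper's ``by construction,'' via invariance of $\set_i$) to pass to $\underline{\alpha}$. Pairing the $\phib$-increment with $\s_i(t+1)$ directly is just tidier bookkeeping than the paper's three-term split.
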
   
\begin{proof}
  With a slight abuse of notation, we denote
  $\s = \s(t)$, $\sp = \s(t+1)$, $\phib = \phib(t)$, and
  $\phibp = \phib(t+1)$.
  From the definition of $V$ in~\eqref{eq:lyap} and the dynamics
  in~\eqref{eq:dyn_s}, we have that 
\begin{align*}
	& V(\sp,\phibp) - V(\s,\phib) \\
	& = \sum_{i \in \agt} \Big[ \max_{\x_i \in \set_i } \x_i^\top \phibp_i - \max_{\x_i \in \set_i } \x_i^\top \phib_i - {\sp_i}^\top \phibp_i + \s_i^\top \phib_i \Big] \\
	& = \sum_{i \in \agt} \Big[ \max_{\x_i \in \set_i } \x_i^\top \phibp_i - \max_{\x_i \in \set_i } \x_i^\top \phib_i \\
	& - \s_i^\top \big[ \phibp_i - \phib_i \big] - \big[\sp_i - \s_i \big]^\top \big[ \phibp_i - \phib_i \big] - \big[\sp_i - \s_i \big]^\top\phib_i \Big]\,.
\end{align*}
Then, we can upper bound the time difference in $V$ as
\begin{align*}
	& V(\sp,\phibp) - V(\s,\phib) \\
	& \leq \sum_{i \in \agt} \Big[ \ip{\suppf_i(\phib_i)}{[\phibp_i - \phib_i]} + \bar{D}_i \|\phibp_i - \phib_i\| \\
	& \qquad - \s_i^\top \big[ \phibp_i - \phib_i \big] - \alpha_i \big[\suppf_i(\phib_i) - \s_i \big]^\top \big[ \phibp_i - \phib_i \big] \\ 
	& \qquad - \alpha_i \big[\suppf_i(\phib_i) - \s_i \big]^\top\phib_i \Big] \,.
\end{align*}
The first two terms on the right-hand side of the previous inequality
come
from applying Lemma~\ref{lem:change_max_lin_func}; and the last two
terms similarly come from replacing~\eqref{eq:dyn_s} and rearranging
terms. Then, combining like terms gives,
\begin{align*}
	& V(\sp,\phibp) - V(\s,\phib) \\
	& \leq \sum_{i \in \agt} \Big[ [\suppf_i(\phib_i) - \s_i]^\top [\phibp_i - \phib_i] + \bar{D}_i \|\phibp_i - \phib_i\| \\ 
	& \quad - \alpha_i \big[\suppf_i(\phib_i) - \s_i \big]^\top \big[ \phibp_i - \phib_i \big] - \alpha_i \big[\suppf_i(\phib_i) - \s_i \big]^\top\phib_i \\
	& \leq \sum_{i \in \agt} \Big[ [1-\alpha_i][\suppf_i(\phib_i) - \s_i]^\top [\phibp_i - \phib_i] \\
	& \hspace{8em} + \bar{D}_i \|\phibp_i - \phib_i\| \Big] - \underline{\alpha} V(\s,\phib)\,.
\end{align*}
For the last inequality, we have used the fact that
$\sum_{i \in \agt}\big[\suppf_i(\phib_i) - \s_i \big]^\top\phib_i =
\sum_{i \in \agt} \big[ \max_{\x_i \in \set_i } \x_i^\top \phib_i -
\s_i^\top \phib_i \big] = V(\s,\phib)$, and combined terms from the
previous step. Finally, applying~\eqref{eq:dyn_s} in the last
inequality completes the proof.
\end{proof}

Recall that supergradients are possibly non-unique. Thus, an alternate
definition of the NE~\cite{RTR:18} is given by
\begin{align*}
	\st \in \ne(\game[DR](s)) \hspace{-1ex}\iff\hspace{-1ex} \zero \in \nc_{\set_i}(\sst_i) - \del_\x  \Big[ \U[DR](s)_i(\x,\sst_{-i}) \Big] \Big|_{\sst_i} .
\end{align*}
This motivates us to introduce the following min norm
supergradient. First, let $\Pi_\x(\x,\y)$ be the projection of
$(\x,\y)$ onto $\x$, the first components. Then, define
$\forall i \in \agt$,
\begin{align}\label{eq:min_supergr}
  \v_i(\s) \ldef & \,\Pi_{\zetab_i} \Big( \argmin_{\zetab_i, \z} \| -\zetab_i + \z \|^2 \\
  \notag & \,\, \mathrm{s.t.} \,\, \zetab_i \in \del_\x
           \Big[ \U[DR](s)_i(\x,\s_{-i}) \Big] \Big|_{\s_i}, \, \z \in \nc_{\set_i}(\s_i) \Big)\,.
\end{align} 
Note that the previous formulation in~\eqref{eq:min_supergr} is well
defined since $\|\cdot\|^2$ is strongly convex and lower bounded by
$0$; the set of supergradients is convex and compact; and the normal
cone is a closed convex cone. Hence,~\eqref{eq:min_supergr} produces a
unique optimizer. Further, we introduce a proximal term
$\forall i \in \agt$, with $\lambda_i \in \real_{>0}$,
\begin{align}
  \w_i(\s_i,t) \ldef - \frac{1}{\lambda_i} [\s_i - \s_i(t-1)]\,,
  \label{eq:prox_derivative}
\end{align}  
which is the derivative of the $(1/\lambda_i)$-strongly concave
function $- (1/\lambda_i) \|\s_i - \s_i(t-1)\|^2$. This penalizes the
deviation of the strategy at time $t \in \intnonneg$ from the previous
strategy, and this penalty depends on the 
multiplier $\lambda_i^{-1}$. Now, using the definitions
in~\eqref{eq:min_supergr}, and~\eqref{eq:prox_derivative}, we propose
the following choice of $\phib_i(t)$ for~\eqref{eq:dyn_s}, with
$\mu_i > 0$,
\begin{align}
  \phib_i(t) = \mu_i\v_i(\s(t)) + \w_i(\s_i(t),t)\,.
  \label{eq:phi}
\end{align}
From here, and under Assumption~\ref{asmp:utility_bounded_gradient},
it is easy to see that $\forall i \in \agt$,
$\|\phib_i(t)\| \leq \mu_i B_i + D_i/\lambda_i$,
$\forall t \in \intnonneg$.  We formally define our algorithm dynamics
next and discuss the particular choice of $\phib(t)$ in the remark
that follows.
\begin{defn}\thmtitle{\algoname} 
\label{def:isbrag}
The dynamics obtained from the strategy update in~\eqref{eq:dyn_s},
using the modified sypergradient vector in~\eqref{eq:phi} is referred to as 
\emph{Inertial Supported Better Response Ascending superGradient}
dynamics or \emph{\algoname}.  \bulletend
\end{defn}

\begin{rem}\thmtitle{On the choice of $\phib_i(t)$}
\label{rem:phi_choice}
\rm From the definitions in~\eqref{eq:min_supergr},
and~\eqref{eq:prox_derivative}, it is easy to realize that
$\forall i \in \agt$,
\begin{align*}
  & \phib_i(t) = \del_\x
    \Big[ \mu_i \U[DR](s)_i(\x,\s_{-i}(t)) \Big] \Big|_{\s_i(t)} \hspace{-0.5em}-
    \frac{1}{\lambda_i} [\s_i(t) - \s_i(t-1)] \\
  & \quad \,\, \in \del_\x  \bigg[ \mu_i
    \U[DR](s)_i(\x,\s_{-i}(t)) - \frac{1}{2\lambda_i} \|\x - \s_i(t-1)\|^2 \bigg] \bigg|_{\s_i(t)}. 	
\end{align*}
This produces the effect of allowing the agents to update their
strategies at every time instant using the knowledge of the
supergradient direction (scaled by $\mu_i$) of their utility, along
with an inertial direction so that they do not deviate from their
previous strategies too much.
\bulletend
\end{rem}

Notice from~\eqref{eq:phi} that \algoname uses a one time-step delayed
state information to perform the current state update
in~\eqref{eq:dyn_s}. In such a case, it is useful to define an
auxiliary state $\p_i(t) = \s_i(t-1)$, $\forall i \in \agt$,
$\forall t \in \intnonneg$. Then, the equilibrium set for \algoname is
given by
\begin{align}
  \!\!\!\eqpt \!\ldef\! \big\{(\s,\p)
  \!\in\! \set \!\times\! \set \,\big|\, \s_i \!=\! \p_i \!\in\!
  \Xset^*_i\big(\v_i(\s_i)\big), \forall i \!\in\! \agt\big\}.
	\label{eq:eqpt}
\end{align}
Next, we establish $V$ as a size function (see
Definition~\ref{def:size_func}) for a superset of the DRoNE set. This
will be useful in proving convergence results in the sequel.

\begin{lem}\thmtitle{$V$ is a size function}
\label{lem:v_size_func}
Suppose Assumption~\ref{asmp:utility_bounded_gradient} holds. Define
$\ballb \ldef \times_{i \in \agt} \ball_{B_i}(\zero)$ and let
\begin{align}
  \phibt_i(\s_i, \p_i, \zetab_i) = \mu_i \zetab_i - \frac{1}{\lambda_i} [\s_i - \p_i],
  \quad \forall i \in \agt\,;
  \label{eq:phi_append}
\end{align}
where, $\forall i \in \agt$, $\s_i, \p_i \in \set_i$, and
$\zetab_i \in \ball_{B_i}(\zero)$.  Define,
\begin{align}
  \label{eq:z_set} & \Zset \ldef \Big\{(\s,\p,\zetab) \in \set \times \set \times \ballb \,\,\Big|\,\, \\ 
  \notag & \hspace{9em} \forall i \in \agt, \,\, \s_i \in \Xset^*_i\big(\phibt_i(\s_i, \p_i, \zetab_i)\big) \Big\}\,.
\end{align}
Then, $V\big(\s, \, \phibt(\s, \p, \zetab)\big)$ as a function of
$(\s, \p, \zetab)$ is a size function for $\Zset$. Moreover,
\begin{align*}
  \Big\{(\sst,\sst,\zetabst) \,\Big|\,
  \sst \in \ne(\game[DR](s)), \zetabst_i = \v_i(\sst), \forall i \in \agt \Big\}
  \subseteq \Zset\,.
\end{align*} 
\end{lem}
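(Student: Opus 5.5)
The argument verifies the three properties of Definition~\ref{def:size_func} for $\omega(\s,\p,\zetab) \ldef V\big(\s,\phibt(\s,\p,\zetab)\big)$ on the domain $\set\times\set\times\ballb$, and then checks the inclusion directly.

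\emph{Continuity.} The map $\phibt$ in~\eqref{eq:phi_append} is affine in $(\s,\p,\zetab)$. By~\eqref{eq:lyap_separate},
\[
V(\s,\phib)=\sum_{i}\big[f_i(\phib_i)-\s_i^\top\phib_i\big],
\]
where $f_i(\phib_i)=\max_{\x_i\in\set_i}\x_i^\top\phib_i$ is the support function of a compact set. Each $f_i$ is convex and finite, hence continuous (indeed Lipschitz, as already noted after~\eqref{eq:lyap}). The term $\s_i^\top\phib_i$ is bilinear. Composing with the affine map $\phibt$ therefore gives a continuous $\omega$.

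\emph{Positive definiteness w.r.t.\ $\Zset$.} Each summand satisfies $f_i(\phibt_i)-\s_i^\top\phibt_i\ge 0$, because $\s_i\in\set_i$ is feasible for the maximization defining $f_i$. Hence $\omega=0$ if and only if every summand vanishes. A summand vanishes exactly when $\s_i$ attains the maximum, i.e.\ $\s_i\in\Xset_i^*\big(\phibt_i(\s_i,\p_i,\zetab_i)\big)$, which is the defining condition of $\Zset$ in~\eqref{eq:z_set}. Off $\Zset$ some summand is strictly positive, so $\omega>0$ there.

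\emph{Properness, and compactness of $\Zset$.} The domain $\set\times\set\times\ballb$ is compact by Standing Assumption~\ref{asmp:gen_game_assumption}:~\eqref{asmp:strategy_compact} and the definition of $\ballb$. Each sublevel set $\{\omega\le r\}$ is the preimage of a closed set under a continuous map, so it is closed in a compact set and hence compact. In particular $\Zset=\{\omega\le 0\}$ is compact, as Definition~\ref{def:size_func} requires. This step needs a little care: Definition~\ref{def:size_func} is phrased on all of $\real^n$, whereas $\omega$ is only defined on the product set. I would handle this by noting that the paper's state space is this compact invariant set ($\set$ is invariant under~\eqref{eq:dyn_s} and $\zetab$ lies in $\ballb$ by Assumption~\ref{asmp:utility_bounded_gradient}), so the definition is applied relative to that domain.

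\emph{Inclusion.} Take $\sst\in\ne(\game[DR](s))$, set $\p=\sst$ and $\zetabst_i=\v_i(\sst)$. Assumption~\ref{asmp:utility_bounded_gradient} gives $\zetabst\in\ballb$, since $\v_i$ is a supergradient by~\eqref{eq:min_supergr}. Because $\s=\p$, the proximal term vanishes and $\phibt_i=\mu_i\v_i(\sst)$. The NE characterization $\zero\in\nc_{\set_i}(\sst_i)-\partial\U[DR](s)_i$ means there exist a supergradient $\zetab_i$ and $\z_i\in\nc_{\set_i}(\sst_i)$ with $\zetab_i=\z_i$. This pair makes the objective of~\eqref{eq:min_supergr} equal to zero, so the unique minimizer satisfies $\v_i(\sst)=\z_i\in\nc_{\set_i}(\sst_i)$. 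By the definition of the normal cone, $\v_i(\sst)^\top(\y-\sst_i)\le 0$ for all $\y\in\set_i$, and scaling by $\mu_i>0$ preserves this inequality. Hence $\sst_i\in\Xset_i^*\big(\mu_i\v_i(\sst)\big)$, so $(\sst,\sst,\zetabst)\in\Zset$.

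The main subtlety is this last identification: one must use uniqueness of the optimizer in~\eqref{eq:min_supergr} to conclude that $\v_i(\sst)$ is itself the supergradient lying in the normal cone, rather than merely the supergradient closest to it.
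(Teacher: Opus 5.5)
Your proof is correct and follows the paper's route: continuity from composing the Lipschitz $V$ with the affine map $\phibt$, the zero set of $V$ being exactly the argmax condition defining $\Zset$, compactness of $\Yset=\set\times\set\times\ballb$ together with its invariance, and the inclusion from~\eqref{eq:min_supergr} plus the normal-cone characterization of a NE (which you spell out, where the paper only says it is easily verified). The only cosmetic difference is that the paper builds a proper function on the whole space, extending $V$ by $d(\cdot,\Yset)$ plus the value of $V$ at the projection onto $\Yset$, whereas you restrict the definition to $\Yset$. One correction to your closing remark: uniqueness of the optimizer in~\eqref{eq:min_supergr} is not what does the work, and in fact the pair $(\zetab_i,\z)$ need not be unique, since the objective depends only on $\z-\zetab_i$. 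What suffices is that the optimal value is zero, which forces every optimizer to satisfy $\zetab_i=\z\in\nc_{\set_i}(\sst_i)$, so $\v_i(\sst)$ lies in the normal cone whichever optimizer is selected.
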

\begin{proof}
  First recall from~\eqref{eq:lyap} and the discussion following it
  that $V$ is Lipschitz and; hence, continuous. Moreover, recall that by
  definition, $V(\s,\phib) \geq 0$, $\forall \, \s \in \set$, and
  $\phib$.
  From~\eqref{eq:prox_derivative} and~\eqref{eq:phi}, suppose
  $\p \in \set$ (with components $\p_i \in \set_i$, $i \in \agt$)
  describes the state at the previous time step; then
\begin{align*}
  \phib_i(\s,\p) = \mu_i \v_i(\s) - \frac{1}{\lambda_i} [\s_i - \p_i],
  \quad \forall i \in \agt\,.
\end{align*}
Since $\forall i \in \agt$,
$\v_i(\s) \in \del_\x \big[ \U[DR](s)_i(\x,\s_{-i}) \big]
\big|_{\s_i}$, we can consider the evolution of \algoname
(Definition~\ref{def:isbrag}) with $\phibt$ in~\eqref{eq:phi_append}
in the set $\set \times \set \times \ballb$.  Because of the
assumption on the compactness of $\set_i$,
and Assumption~\ref{asmp:utility_bounded_gradient},
$\forall i \in \agt$,
$\phibt_i \in \ball_{\mu_i B_i + D_i/\lambda_i}(\zero)$,
$\zetab_i \in \ball_{B_i}(\zero)$, $\s_i \in \set_i$ and
$\p_i \in \set_i$. Then, using these definitions and with a
  slight abuse of notation 
  $V(\s,\p,\zetab) \ldef V(\s,\phibt)$, we have that $V(\s,\p,\zetab)$
  is continuous in all its arguments; since $\phibt$
  in~\eqref{eq:phi_append} is continuous in its arguments. For the
  sake of brevity, define $\Yset \ldef \set \times \set \times \ballb$,
  which is a compact set. Then, $V(\s,\p,\zetab)$ attains its maximum
  and minimum on $\Yset$, and it is easy to deduce that there is a
  continuous extension of $V$ (say $\Vt$) that is proper. For example,
\begin{align*}
	\Vt(\s,\p,\zetab) = 
	\begin{cases}
		V(\s,\p,\zetab), \qquad \text{if } (\s,\p,\zetab) \in \Yset; \\
		d\big((\s,\p,\zetab),\Yset\big) + V(\s^*,\p^*,\zetab^*) \quad \text{otherwise};
	\end{cases}\\
\end{align*}
where
$(\s^*,\p^*,\zetab^*) \in \argmin_{(\hat\s,\hat\p,\hat\zetab) \in
  \Yset} \|(\hat\s,\hat\p,\hat\zetab)-(\s,\p,\zetab)\|$. 

Now, $\Vt(\s,\p,\zetab) \geq 0$, $\forall (\s,\p,\zetab) \in
\Yset$. Moreover, $\Vt(\s,\p,\zetab) = 0$ iff $\forall i \in \agt$,
$\max_{\x_i \in \set_i } \x_i^\top \phibt_i(\s_i,\p_i,\zetab_i) =
\s_i^\top \phibt_i(\s_i,\p_i,\zetab_i)$, \emph{i.e.}
$\s_i \in \Xset^*_i\big(\phibt_i(\s_i, \p_i, \zetab_i)\big)$. Further,
since $\Vt$ is continuous, $\forall l \in \realnonneg$, the
$l$-sublevel set $\{(\s,\p,\zetab)\,|\, \Vt(\s,\p,\zetab) \leq l \}$
is a closed subset of $\real^n$. Now, by construction, these
$l$-sublevel sets are compact, since $V$ attains its maximum in
$\Yset$. Thus, by Definition~\ref{def:size_func}, $\Vt$ is a size
function for $\Zset$. Notice from~\eqref{eq:dyn_s} and~\eqref{eq:phi}
that the set $\Yset$ is further invariant under \algoname. Thus
equivalently, $V$ is a size function for $\Zset$.

The last claim on subset relationship can be easily verified
from~\eqref{eq:min_supergr} and the definition of a NE in
Definition~\ref{def:eps_ne}. This completes the proof.
\end{proof}

Before providing convergence guarantees for our algorithm, we
define a class of games for which the supergradients play well with
the proximal terms.
\begin{defn}\thmtitle{Amicable supergradients}
\label{def:amicable_supgr}
The game $\game[DR](s)$ (satisfying Standing
Assumption~\ref{asmp:gen_game_assumption}) is said to have
\emph{amicable supergradients} if the following holds. Consider an
arbitrary but fixed $i \in \agt$. Let $(\s_i,\s_{-i}) \in \set$. Then,
$\exists \, d_i \in \real_{>0}$ such that
$\forall (\sb_i,\sb_{-i}) \in \ball_{d_i}(\s_i,\s_{-i})$,
$\exists \, \zetab_i^1 \in \del_\x \big[ \U[DR](s)_i(\x,\s_{-i}) \big]
\big|_{\s_i}$,
$\exists \, \zetab_i^2 \in \del_\x \big[ \U[DR](s)_i(\x,\sb_{-i})
\big] \big|_{\s_i}$, and
$\exists \, \zetab_i^3 \in \del_\x \big[ \U[DR](s)_i(\x,\s_{-i}) \big]
\big|_{\sb_i}$, such that,
\begin{align*}
  & \ip{\big[\zetab_i^2 - \zetab_i^1\big]}{\big[\sb_i - \s_i\big]}
    - \ip{\big[\zetab_i^3 - \zetab_i^1\big]}{\big[\sb_i - \s_i\big]} \leq c_i \|\sb_i - \s_i\|^2\!\!,
\end{align*}
for some $c_i \in \real$. This $\{c_i\}_{i \in \agt}$ is referred to
as the \emph{factors of amicability} of the supergradients.
\bulletend
\end{defn}
The name \emph{amicable} is chosen to represent the fact that the
supergradients behave in a ``friendly'' manner with respect to the
proximal term in~\eqref{eq:prox_derivative}. The significance of the
previous definition will come into focus in the proof of the next
theorem, where we will use the factors of concavity
$\{\lambda_i^{-1}\}_{i \in \agt}$ to offset the factors of amicability
$\{c_i\}_{i \in \agt}$ and produce
monotonicity-like~\cite{YWC-CK-MA:24} behavior. In this regard,
``amicability'' can be seen to extend monotonicity.
Next, we state and prove conditions under which \algoname converges.

\begin{thm}\thmtitle{Convergence of \algoname}
\label{thm:isbrag_converge}
Suppose Assumption~\ref{asmp:utility_bounded_gradient}
holds. Moreover, suppose $\game[DR](s)$ has amicable supergradients
with factor of amicability $\{c_i\}_{i \in \agt}$. With
$d_{\mathsf{min}} \ldef \min_{i \in \agt} d_i$, choose
$\mu_i \in (0, \infty)$, $\forall i \in \agt$ and
\begin{align}
  \alpha_i \in \left(0, \,\, \min\left\{\frac{d_{\mathsf{min}}}{D_i},
  \,\, \frac{1}{2} \right\}\right], \quad \forall i \in \agt\,.
	\label{eq:alpha_choice}
\end{align}
Next, $\forall i \in \agt$, choose $\scalefactor_i \in (1,\infty)$ and
suppose
\begin{align}
	\lambda_i \in 
	\begin{cases}
          \displaystyle\left(\frac{1}{\scalefactor_i \mu_i c_i},
          \frac{1}{\mu_i c_i} \right), & \text{if } c_i > 0; \\\\
          \displaystyle\left(\frac{1}{\scalefactor_i \mu_i}, \infty\right), & \text{otherwise}\,.
	\end{cases}
	\label{eq:lambda_choice}
\end{align}
Define the size function
$\omega(\cdot) \ldef d(\cdot,\ne(\game[DR](s)))$ as the distance from
the NE set of $\game[DR](s)$. Denote
$\bar{\alpha} \ldef \max_{i \in \agt} \alpha_i$,
$\bar{\mu} \ldef \max_{i \in \agt} \mu_i$.  Finally, let $\s(t)$ be a
trajectory of the \algoname dynamics in Definition~\ref{def:isbrag}
from the initial condition
$\{\s_i(0) = \s_i(-1) \in \set_i\}_{i \in \agt}$.  Then, there exist
functions $\beta_1 \in \Mcal{KL}$, $\gamma_1 \in \Mcal{K}$, and a
continuous function
$\rho_1 : [0,0.5] \times \realnonneg \to \realnonneg$ such that
\begin{align}
  \omega\big(\s(t)\big) \leq \max\left\{\beta_1\Big(\omega\big(\s(0)\big),t\Big), \,\,
  \gamma_1\Big(K \rho_1\big(\bar{\alpha},\bar{\mu}\big)\Big)\right\}\,,
	\label{eq:dyn_iss_const_ip}
\end{align} 
where, the constant $K$ depends only on
$\{D_i, B_i, c_i, \scalefactor_i\}_{i \in \agt}$.  Moreover, $\rho_1$
is strictly increasing wrt both its arguments and $\rho_1(x,y) \to 0$
as $(x,y) \to 0$.
\end{thm}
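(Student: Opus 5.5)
We will work with the augmented state $(\s(t),\p(t))$ with $\p(t)=\s(t-1)$ and use $V(\s(t),\phib(t))$, with $\phib$ from~\eqref{eq:phi}, as an ISS-type Lyapunov function in the sense of~\eqref{eq:iss}. Lemma~\ref{lem:lyap_change} is the starting point: because $\|\phib_i(t)\|\le \mu_iB_i+D_i/\lambda_i$, it gives
\begin{align*}
V(t+1)-V(t)\le -\underline{\alpha}V(t)+\sum_{i\in\agt}\Big[\tfrac{1-\alpha_i}{\alpha_i}[\s_i(t+1)-\s_i(t)]^\top[\phib_i(t+1)-\phib_i(t)]+\bar D_i\|\phib_i(t+1)-\phib_i(t)\|\Big].
\end{align*}
The proof therefore reduces to two tasks. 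The first is to show that the cross term is nonpositive up to a small error. The second is to show that the term $\bar D_i\|\Delta\phib_i\|$ is bounded by a quantity that vanishes as $(\bar\alpha,\bar\mu)\to 0$. Once both are done, the result follows from a standard ISS argument with a constant input $K\rho_1(\bar\alpha,\bar\mu)$, together with the size-function comparison of Lemma~\ref{lem:size_func}, using Lemma~\ref{lem:v_size_func} to pass from $V$ to $\omega=d(\cdot,\ne(\game[DR](s)))$.

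\emph{Cross term.} Write $\Delta\s_i=\s_i(t+1)-\s_i(t)=\alpha_i[\suppf_i-\s_i]$. By~\eqref{eq:alpha_choice}, $\|\Delta\s_i\|\le\alpha_iD_i\le d_{\mathsf{min}}$, so $\s(t+1)\in\ball_{d_i}(\s(t))$ and Definition~\ref{def:amicable_supgr} applies. Next decompose
\begin{align*}
\Delta\phib_i=\mu_i[\v_i(\s(t+1))-\v_i(\s(t))]-\tfrac{1}{\lambda_i}[\Delta\s_i-\Delta\s_i(t-1)].
\end{align*}
Split the supergradient difference as $(\zetab^2-\zetab^1)$, which comes from the change in $\s_{-i}$, plus $(\zetab^3-\zetab^1)$, which comes from the change in own strategy. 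Concavity in own strategy gives monotonicity of the own part, namely $\langle \zetab^3-\zetab^1,\Delta\s_i\rangle\le 0$. Amicability then bounds the combination by $c_i\|\Delta\s_i\|^2$. The proximal term contributes $-\lambda_i^{-1}\|\Delta\s_i\|^2+\lambda_i^{-1}\Delta\s_i^\top\Delta\s_i(t-1)$. With the choice~\eqref{eq:lambda_choice}, $\mu_ic_i-\lambda_i^{-1}<0$, so a negative quadratic $-\kappa_i\|\Delta\s_i\|^2$ with $\kappa_i\propto(\scalefactor_i-1)$ survives. The remaining inertial product $\lambda_i^{-1}\Delta\s_i^\top\Delta\s_i(t-1)$ we handle with Young's inequality, absorbing half into $-\kappa_i\|\Delta\s_i\|^2$. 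The other half, proportional to $\|\Delta\s_i(t-1)\|^2\le\alpha_i^2D_i^2$, is treated as a disturbance.

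A technical point remains: $\v_i$ is the specific min-norm element, not necessarily the $\zetab^1,\zetab^2,\zetab^3$ given by amicability. We will instead bound the discrepancy by $2B_i$ times $\|\Delta\s_i\|$. Multiplying by $\mu_i(1-\alpha_i)/\alpha_i$ and using $\|\Delta\s_i\|\le\alpha_iD_i$ yields a term of order $\mu_iB_iD_i$, which is again a small disturbance in $\bar\mu$.

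\emph{Second term and conclusion.} We have $\|\Delta\phib_i\|\le 2\mu_iB_i+\lambda_i^{-1}(\|\Delta\s_i\|+\|\Delta\s_i(t-1)\|)$. Since $\lambda_i^{-1}<\scalefactor_i\mu_i\max\{c_i,1\}$, this is at most $\mu_i\,\mathrm{const}\,(B_i+\scalefactor_i\max\{c_i,1\}\alpha_iD_i)$. Collecting all the disturbance terms gives
\begin{align*}
V(t+1)\le(1-\underline\alpha)V(t)+K\rho_1(\bar\alpha,\bar\mu),
\end{align*}
with, for example, $\rho_1(x,y)=y(1+x)+x^2$ suitably arranged so that it is increasing and vanishes at $0$, and $K$ depending only on $D_i,B_i,c_i,\scalefactor_i$. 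Iterating this inequality gives an exponential decay term plus an ultimate bound $K\rho_1/\underline\alpha$. We then convert to $\omega$ via Lemma~\ref{lem:size_func}, bounding $\omega(\s)$ by a class-$\Mcal{K}_\infty$ function of $V$ on the invariant compact set from Lemma~\ref{lem:v_size_func}. Using $\s(0)=\s(-1)$, the initial $V$ is bounded by a $\Mcal{K}$ function of $\omega(\s(0))$, which gives the form~\eqref{eq:dyn_iss_const_ip}.

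The main obstacle is the cross-term step. It requires correctly matching the min-norm supergradients $\v_i$ to the amicability witnesses, and controlling the factor $1/\alpha_i$ without destroying smallness. Both must work so that the residual scales with $\bar\mu$ and $\bar\alpha$ rather than being $O(1)$. A further subtlety is that $V=0$ characterizes $\Zset$ and not only the DRoNE set. Relating $V$ to $\omega$ therefore needs the $\mu,\lambda$ dependence to be absorbed into $\gamma_1(K\rho_1)$.
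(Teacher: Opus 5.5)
Your skeleton is the paper's: Lemma~\ref{lem:lyap_change}, then bounds on the cross term and on $\|\phib_i(t+1)-\phib_i(t)\|$ as in~\eqref{eq:v_change_norm}, then a constant-input ISS argument. The difference is in the cross term, which is the heart of the proof.

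Write $\Delta\s_i(t)=\s_i(t+1)-\s_i(t)$ and $\Delta\phib_i(t)=\phib_i(t+1)-\phib_i(t)$.

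\textbf{How the paper handles the cross term.} The paper never decomposes $\v_i(\s(t+1))-\v_i(\s(t))$. It uses that $\phib_i(t)$ is a supergradient at $\s_i(t)$ of the $\frac{1}{\lambda_i}$-strongly concave map $\x\mapsto\mu_i\U[DR](s)_i(\x,\s_{-i}(t))-\frac{1}{2\lambda_i}\|\x-\s_i(t-1)\|^2$, and likewise for $\phib_i(t+1)$ at $\s_i(t+1)$. It evaluates the two strong-concavity inequalities at $\x=\s_i(t+1)$ and $\x=\s_i(t)$ and adds them. This turns the inner product into $\frac{1}{\lambda_i}\ip{\Delta\s_i(t)}{\Delta\s_i(t-1)}-\frac{1}{\lambda_i}\|\Delta\s_i(t)\|^2$ plus a four-term difference of utility values. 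That difference is then bounded using supergradients at the cross points $(\s_i(t),\s_{-i}(t+1))$ and $(\s_i(t+1),\s_{-i}(t))$, which may be chosen as the witnesses $\zetab_i^2,\zetab_i^3$ of Definition~\ref{def:amicable_supgr}. Going through function values decouples the argument from the min-norm selection $\v_i$. No mismatch term appears, and the amicability expression comes out exactly. After discarding $(\mu_ic_i-\frac{1}{\lambda_i})\|\Delta\s_i(t)\|^2\le 0$ and applying Cauchy--Schwarz, the cross term contributes only $(1-\alpha_i)\alpha_iD_i^2/\lambda_i$.

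\textbf{Your version.} The split of $\v_i(\s(t+1))-\v_i(\s(t))$ into $(\zetab_i^2-\zetab_i^1)+(\zetab_i^3-\zetab_i^1)$ is not an identity. The point $\v_i(\s(t+1))$ sits at $(\s_i(t+1),\s_{-i}(t+1))$, where amicability supplies no witness. A correct direct version would insert a supergradient at $(\s_i(t),\s_{-i}(t+1))$ and use monotonicity of $\del_\x[\U[DR](s)_i(\x,\s_{-i}(t+1))]$. Even then, the mismatch between $\v_i(\s(t))$ and $\zetab_i^1$ remains.

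More to the point, once you accept an $O(B_i)\|\Delta\s_i(t)\|$ bound for mismatches, you do not need amicability at all. Cauchy--Schwarz on the whole term gives
\[
\frac{1-\alpha_i}{\alpha_i}\,|\ip{\Delta\s_i(t)}{\Delta\phib_i(t)}|\le(1-\alpha_i)D_iA_i\mu_i .
\]
This is $O(\bar\mu)$, the same order as the term $\bar D_i\|\Delta\phib_i(t)\|\le\bar D_iA_i\mu_i$ already in the paper's bound. So the stated estimate, even with the paper's $\rho_1$, follows modulo the comparison step discussed below. It uses~\eqref{eq:lambda_choice} only through $1/\lambda_i<\scalefactor_i\tilde c_i\mu_i$.

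This cruder route is also unaffected by the fact that $\alpha_i\le d_{\mathsf{min}}/D_i$ bounds each block $\|\Delta\s_i(t)\|$ by $d_{\mathsf{min}}$, but not the full vector $\|\s(t+1)-\s(t)\|$. The amicability step, in your proposal and in the paper, assumes the latter. What the paper's route buys is an extra factor $\alpha_i$ in the cross term. It does not improve the order of the ultimate bound on $V$.

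\textbf{Two smaller points.} First, drop the Young's-inequality absorption. For $c_i>0$, the margin $m_i=\frac{1}{\lambda_i}-\mu_ic_i$ lies in $(0,(\scalefactor_i-1)\mu_ic_i)$. So it is bounded above, not below, by a multiple of $\scalefactor_i-1$. The leftover, of size proportional to $1/(m_i\lambda_i^2)$, would make $K$ depend on $\lambda_i$. Plain Cauchy--Schwarz, as in the paper, gives $\alpha_i^2D_i^2/\lambda_i$ instead.

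Second, you are right to worry about the two-sided comparison between $V$ and $\omega$. The paper settles it only by citing Lemmas~\ref{lem:size_func} and~\ref{lem:v_size_func}. It does not address that $V$ vanishes on $\Zset$ rather than on the DRoNE set. Your proposal is therefore no less complete than the paper's proof at that step.
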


\begin{proof}
  First, notice that because of Lemmas~\ref{lem:size_func}
  and~\ref{lem:v_size_func}, there exists functions
  $\sigma_1, \sigma_2 \in \Mcal{K}_\infty$ such that
\begin{align}
  \sigma_1(\omega(\s)) \leq V(\s)
  \leq \sigma_2(\omega(\s)), \quad \forall \s \in \set.
  \label{eq:v_upper_lower_k_infty} 
\end{align}
This can be seen from the statement of Lemma~\ref{lem:v_size_func} and realizing that  the set $\{(\s,\zetab)\,|\, \forall i \in \agt, \s_i \in \set_i, \zetab_i \in \del_{\x_i} [\U[DR](s)_i(\x_i,\s_{-i})]|_{\s_i}\}$ is invariant under \algoname.

Now, to bound the change in $V$, we first provide an upper bound on
$[\s_i(t+1) - \s_i(t)]^\top [\phib_i(t+1) - \phib_i(t)]$,
$\forall i \in \agt$. With a slight abuse of notation, set
$\s_i = \s_i(t)$, $\sp_i = \s_i(t+1)$, $\sm_i = \s_i(t-1)$,
$\s_{-i} = \s_{-i}(t)$, $\sp_{-i} = \s_{-i}(t+1)$, $\phib_i^- = \phib_i(t-1)$
$\phib_i = \phib_i(t)$, and $\phibp_i = \phib_i(t+1)$.

Consider an arbitrary but fixed agent $i \in \agt$. By the property of
supergradients, at time $t$, we have, $\forall \x \in \set_i$,
\begin{align}
  \label{eq:supergr_t} & \mu_i\U[DR](s)_i(\x,\s_{-i}) - \frac{1}{2\lambda_i}
                         \|\x - \sm_i\|^2 \leq \mu_i\U[DR](s)_i(\s_i,\s_{-i}) \\
  \notag & \qquad - \frac{1}{2\lambda_i} \|\s_i - \sm_i\|^2
           + \ip{\phib_i}{[\x - \s_i]} - \frac{1}{2\lambda_i} \|\x - \s_i\|^2;
\end{align}
and at time $t+1$, we have, $\forall \x \in \set_i$,
\begin{align}
  \label{eq:supergr_t_1} & \mu_i\U[DR](s)_i(\x,\sp_{-i}) -
                           \frac{1}{2\lambda_i} \|\x - \s_i\|^2 \leq \mu_i\U[DR](s)_i(\sp_i,\sp_{-i}) \\
  \notag & \quad - \frac{1}{2\lambda_i}
           \|\sp_i - \s_i\|^2 + \ip{\phibp_i}{[\x - \sp_i]} - \frac{1}{2\lambda_i} \|\x - \sp_i\|^2\,.
\end{align}
Note that the last quadratic terms in both of the previous two equations 
come from the strong concavity of the proximal term
$\|\cdot\|^2$~\cite{SB-LV:04}.  Substituting $\x = \sp_i$
in~\eqref{eq:supergr_t}, $\x = \s_i$ in~\eqref{eq:supergr_t_1}; and
combining the inequalities, we get
\begin{align*}
& \ip{[\phibp_i-\phib_i]}{[\sp_i - \s_i]} \\ 
& \leq \frac{1}{2\lambda_i} \|\sp_i - \sm_i\|^2 - \frac{1}{2\lambda_i} \|\s_i - \sm_i\|^2 - \frac{1}{2\lambda_i} \|\sp_i - \s_i\|^2 \\
& \qquad - \bigg[\frac{1}{2\lambda_i} + \frac{1}{2\lambda_i} \bigg] \|\sp_i - \s_i\|^2  +\mu_i \bigg[\U[DR](s)_i(\s_i,\s_{-i})\\
  & \qquad - \U[DR](s)_i(\sp_i,\s_{-i}) + \U[DR](s)_i(\sp_i,\sp_{-i}) - \U[DR](s)_i(\s_i,\sp_{-i})\bigg].
\end{align*}
Noting that
$\|\sp_i - \sm_i\|^2 = \|\sp_i - \s_i\|^2 + 2\ip{[\sp_i - \s_i]}{[\s_i
  - \sm_i]} + \|\s_i - \sm_i\|^2$, and applying appropriate algebraic
simplifications, leads to
\begin{align*}
  & \ip{[\phibp_i-\phib_i]}{[\sp_i - \s_i]} \\ 
& \leq \frac{1}{\lambda_i} \ip{[\sp_i - \s_i]}{[\s_i - \sm_i]} - \frac{1}{\lambda_i} \|\sp_i - \s_i\|^2 \\
& \qquad \qquad + \mu_i \Big[\U[DR](s)_i(\s_i,\s_{-i}) - \U[DR](s)_i(\sp_i,\s_{-i}) \\
& \qquad \qquad + \U[DR](s)_i(\sp_i,\sp_{-i}) - \U[DR](s)_i(\s_i,\sp_{-i}) \Big] \,.
\end{align*}
Now, suppose
$\zetab_i \big(\sp_i,\s_{-i}\big) \in \del_\x \Big[
\U[DR](s)_i(\x,\s_{-i}) \Big] \Big|_{\sp_i}$,
$\zetab_i \big(\s_i,\sp_{-i}\big) \in \del_\x \Big[
\U[DR](s)_i(\x,\sp_{-i}) \Big] \Big|_{\s_i}$.  Then using the appropriate
definition of the supergradients (see Definition~\ref{def:subgr}), we modify the previous bound on $\ip{[\phibp_i-\phib_i]}{[\sp_i - \s_i]}$ as
\begin{align*}
& \ip{[\phibp_i-\phib_i]}{[\sp_i - \s_i]} \\ 
& \leq \frac{1}{\lambda_i} \ip{[\sp_i - \s_i]}{[\s_i - \sm_i]} - \frac{1}{\lambda_i} \|\sp_i - \s_i\|^2 \\
& \qquad + \mu_i \ip{\Big[\zetab_i \big(\s_i,\sp_{-i}\big) - \zetab_i \big(\sp_i,\s_{-i}\big)\Big]}{\big[\sp_i - \s_i\big]}\\
& = \frac{1}{\lambda_i} \ip{[\sp_i - \s_i]}{[\s_i - \sm_i]} - \frac{1}{\lambda_i} \|\sp_i - \s_i\|^2 \\
& \qquad + \mu_i \ip{\Big[\zetab_i \big(\s_i,\sp_{-i}\big) - \zetab_i \big(\s_i,\s_{-i}\big)\Big]}{\big[\sp_i - \s_i\big]} \\
& \qquad - \mu_i \ip{\Big[\zetab_i \big(\sp_i,\s_{-i}\big) - \zetab_i \big(\s_i,\s_{-i}\big)\Big]}{\big[\sp_i - \s_i\big]}\,.
\end{align*}
Here, for the last equality, we have
used the linearity property of inner products. Then, from 
  Definition~\ref{def:amicable_supgr},
\begin{align*}
& \ip{[\phibp_i-\phib_i]}{[\sp_i - \s_i]} \leq \\
&  \, \frac{1}{\lambda_i} \ip{[\sp_i - \s_i]}{[\s_i - \sm_i]} - \frac{1}{\lambda_i} \|\sp_i - \s_i\|^2 + \mu_i c_i \|\sp_i - \s_i\|^2 \,.
\end{align*}
Indeed, we can use the inequality in
Definition~\ref{def:amicable_supgr}, since we have chosen
$\alpha_i < d_{\mathsf{min}}/D_i$ to ensure that the opponents'
strategies remain within the $d_i$-ball around the current strategy
profile.  Now, because of~\eqref{eq:lambda_choice},
$(- \frac{1}{\lambda_i} + \mu_i c_i) \|\sp_i - \s_i\|^2 \leq 0$. Then,
substituting in the dynamics~\eqref{eq:dyn_s} in the previous
inequality gives us
\begin{align*}
  \notag & \ip{[\phibp_i-\phib_i]}{[\sp_i - \s_i]} \leq \frac{1}{\lambda_i} \ip{[\sp_i - \s_i]}{[\s_i - \sm_i]} \\
         & \quad = \frac{\alpha_i^2}{\lambda_i} \ip{[\suppf_i(\phib_i) - \s_i]}{[\suppf_i(\phib_i^-) - \sm_i]} \,.
\end{align*}
Recall that $\suppf_i(\phib_i), \suppf_i(\phib_i^-) \in
\set_i$. Relying on the Cauchy-Schwarz inequality on the previous step
and utilizing the fact that $D_i$ is the diameter of $\set_i$, we
conclude that
\begin{align}
  \notag & \ip{[\phibp_i-\phib_i]}{[\sp_i - \s_i]} \leq \\
  & \quad \frac{\alpha_i^2}{\lambda_i} \|\suppf_i(\phib_i) - \s_i\| \|\suppf_i(\phib_i^-) - \sm_i\| \leq \alpha_i^2\frac{D_i^2}{\lambda_i} \,.
           \label{eq:v_change_ip}
\end{align}

Finally, we provide an upper bound for $\|\phibp_i - \phib_i\|$.
Quickly, define $\forall i \in \agt$, 
\begin{align*}
	\tilde{c}_i = 
	\begin{cases}
		c_i, & \text{if } c_i > 0;\\
		1, & \text{otherwise}\,;
	\end{cases}
\end{align*}
and notice that $\frac{1}{\lambda_i} < \scalefactor_i \mu_i \tilde{c}_i$, $\forall i \in \agt$.
Then, using the properties of $\|\cdot\|$ and the triangle inequality,
\begin{align}
	\label{eq:v_change_norm}  \|\phibp_i - \phib_i\| & \leq \mu_i\|\v_i^+ -\v_i\|  + \frac{1}{\lambda_i} \big\|\sp_i - \sm_i \big\|\\
	\notag & \leq 2 \mu_i B_i + \frac{D_i}{\lambda_i}\leq (2B_i + D_i \scalefactor_i \tilde{c}_i) \mu_i \rdef A_i \mu_i  \,.
\end{align}
Recall that $\bar{D}_i \geq D_i$, $\forall i \in \agt$. Now, combining~\eqref{eq:lyap_change} from Lemma~\ref{lem:lyap_change},~\eqref{eq:v_change_ip}, and~\eqref{eq:v_change_norm}, gives us,
\begin{align}
	\label{eq:v_change_final_st1} & V\big(\s(t+1),\phib(t+1)\big) - V\big(\s(t),\phib(t)\big) \\
	 \notag & \leq \sum_{i \in \agt} \Big[ [1-\alpha_i]\alpha_i\frac{D_i^2}{\lambda_i} + \mu_i A_i \bar{D}_i \Big] - \underline{\alpha} V(\s(t),\phib(t)) \\
	 \notag & \leq - \underline{\alpha} V(\s(t),\phib(t)) + K \rho\big(\bar{\alpha},\bar{\mu}\big)
\end{align}
where, $\forall i \in \agt$,
\begin{align}
  & K_i \ldef \max \Big\{ \frac{D_i^2}{\lambda_i}, (2B_i + D_i \scalefactor_i \tilde{c}_i)\bar{D}_i \Big\},
	\label{eq:k_def}
\end{align}
the constant $K \ldef \max_{i \in \agt} n K_i$, and the function
${\rho_1 : [0,0.5] \times \realnonneg \to \realnonneg}$ is defined as
\begin{align}
	\rho_1(x,y) = x- x^2 +  y \,.
	\label{eq:f_def}
\end{align}
Observe that the last inequality in~\eqref{eq:v_change_final_st1} holds
since ${x - x^2 + 1}$ is strictly increasing in $[0,0.5]$ and as such
${\alpha_i - \alpha_i^2 + 1} \leq {\bar{\alpha}_i - \bar{\alpha}_i^2 +
  1}$, $\forall i \in \agt$. Moreover, from
Lemma~\ref{lem:lyap_change}, $\underline{\alpha} > 0$.  Thus,
comparing~\eqref{eq:v_upper_lower_k_infty}
and~\eqref{eq:v_change_final_st1} with~\eqref{eq:iss_v_bound}
and~\eqref{eq:iss_v_change}, we conclude that the system is ISS with
input $K
\rho_1\big(\bar{\alpha},\bar{\mu}\big)$. Then,~\eqref{eq:dyn_iss_const_ip}
follows immediately from Definition~\ref{def:iss}. Moreover, the
properties of $\rho_1$ are immediate from its definition
in~\eqref{eq:f_def}.
\end{proof}

From the previous result, we see that, given the right choice of
parameters, \algoname converges to a region around the DRoNE set. We
conclude this section by discussing the convergence bound in the
following remark.

\begin{rem}\thmtitle{On the effect of parameters for \algoname}
\label{rem:isbrag_param}
\rm From Theorem~\ref{thm:isbrag_converge}, it is clear that the
agents' choices of $\{\alpha_i,\mu_i,\lambda_i\}_{i \in \agt}$ affect
not only the rate of convergence of \algoname (through $\beta_1$), but
also the region to which it converges (through
$\gamma_1$). This region can be made
  arbitrarily small due to the properties of $\gamma_1$ and $\rho_1$
  in~\eqref{eq:dyn_iss_const_ip}. From~\eqref{eq:v_change_final_st1}, it is easy to see
that since $\underline{\alpha} = \min_{i \in \agt} \alpha_i$, the rate
of convergence depends on the agent that has the smallest step
size. If this decreases, the rate of convergence decreases.

Moreover, $\underline{\alpha}$ affects the region to which \algoname
converges as follows. From~\cite{ZJ-ES-YW:99}, we can characterize
$\gamma_1 \in \Mcal{K}$ from Theorem~\ref{thm:isbrag_converge} as
\begin{align}
  \gamma_1\Big(K \rho\big(\bar{\alpha},\bar{\mu}\big)\Big) =
  \sigma_1^{-1}\left( \frac{K}{\mathfrak{M} \underline{\alpha}} \rho_1\big(\bar{\alpha},\bar{\mu}\big) \right)\,,
	\label{eq:gamma_characterization}
\end{align}
where $\mathfrak{M}$ is any number in $(0,1)$ and $\sigma_1$ comes
from~\eqref{eq:v_upper_lower_k_infty}. Thus, using the relation
in~\eqref{eq:f_def}, agent $i \in \agt$ can choose $\alpha_i$ and
$\mu_i$ arbitrarily small and subsequently make
$\gamma_1\big(K \rho_1(\bar{\alpha},\bar{\mu})\big)$ arbitrarily
small.

In light of this, it might be tempting to think that by making $\mu_i$
arbitrarily small, agent $i \in \agt$ diminishes the effect of its
supergradient and is dominated by the inertial term (see
Remark~\ref{rem:phi_choice}). However, due to the
constraint in~\eqref{eq:lambda_choice}, reducing $\mu_i$ requires
increasing $\lambda_i$ which in turn requires decreasing $1/\lambda_i$
too. Hence the parameters $\mu_i$ and $1/\lambda_i$ work in tandem to
counteract the effect of $c_i$ and preserve supergradient information.
\bulletend
\end{rem}

\section{Distributed DRoNE Seeking} \label{sec:dist_algo} To implement
the \algoname dynamics, each agent $i \in \agt$ requires knowledge of
$\s_{-i}(t)$, the current strategies of all other agents. 
In this section, we begin to adapt \algoname to a distributed
communication network $\grph = (\agt,\edg)$ and solve
Problem~\ref{prob}:~\eqref{prob:distributed_drones} under
Assumptions~\ref{asmp:ind_uncertain} and~\ref{asmp:ind_sample} on
individual uncertainty. We tackle the problem in its full generality
in Section~\ref{sec:dist_drones_shared}.

In order to compensate for the lack of knowledge of others'
strategies, we will allow the agents to estimate the same through a
consensus-like protocol. This will induce an error (say
$\Deltab_i(t)$) in the computation of $\phib_i(t)$ (specifically in
the computation of $\v_i(t)$ using~\eqref{eq:min_supergr}) for agent
$i \in \agt$, $\forall t \in \intnonneg$. Let $\phibh(t)$, with
$\phibh_i(t)$ components corresponding to $i \in \agt$, denote the
perturbed version of $\phib(t)$ at time $t \in \intnonneg$. Then, we
update the dynamics in~\eqref{eq:dyn_s} appropriately as,
$\forall i \in \agt$,
\begin{subequations}
\begin{align}
  \label{eq:dyn_phi_hat} \phibh_i(t)
  & = \mu_i \big[\v_i(\s(t)) + \Deltab_i(t) \big] + \w_i(\s_i(t),t) \\
	\notag & = \phib_i(t) + \mu_i \Deltab_i(t) \,,\\
  \label{eq:dyn_pi_hat} \hat{\pib}_i(t)
  & = \suppf_i\big(\phibh_i(t)\big) = \suppf_i\big(\phib_i(t)\big) + \Deltabpr_i(t), \\
  \label{eq:dyn_s_hat} \s_i(t+1)
  & = \s_i(t) + \alpha_i \, \Big[\hat{\pib}_i(t) - \s_i(t) \Big]
  \\
	\notag 
& = \s_i(t) + \alpha_i \, \Big[\suppf_i\big(\phib_i(t)\big) - \s_i(t) \Big] + \alpha_i\Deltabpr_i(t)\,.
\end{align}
\label{eq:dyn_hat}
\end{subequations}
Here, $\Deltabpr_i(t)$ is the error in the strategy updates while
$\Deltab_i(t)$ is the error in the calculation of supergradients.
Note that, if the agents estimate others' strategies and compute
$\phibh_i(t)$ according to the estimated strategies, then, because
of~\eqref{eq:min_supergr},
$\v_i(\s(t)) + \Deltab_i(t) \in \del_\x [ \U[DR](s)_i(\x,\s_{-i}) ]
|_{\s_i}$ and hence $\|\Deltab_i(t)\| \leq 2B_i$,
$\forall t \in \intnonneg$. Moreover, because
of~\eqref{eq:argmax_x_star}, and~\eqref{eq:supp_func},
$\hat{\pib}_i(t) = \suppf_i\big(\phib_i(t)\big) + \Deltabpr_i(t) \in
\set_i$. Hence, since $\suppf_i\big(\phib_i(t)\big) \in \set_i$,
$\|\Deltabpr_i(t)\| \leq D_i$.  Thus, by properly choosing the
$\{\alpha_i, \mu_i\}_{i \in \agt}$ parameters,
the effect of the disturbance can be mitigated
accordingly. To show this formally, we first adapt
Lemma~\ref{lem:lyap_change} to account for the error induced by
estimations.
\begin{lem}\thmtitle{Bound on Lyapunov function differences in
   the presence of disturbances}
\label{lem:lyap_change_perturb}
Consider the dynamics~\eqref{eq:dyn_s_hat} with input sequence
$\{\Deltab(t), \Deltabpr(t)\}_{t \in \intnonneg}$ from an initial
condition $\{\s_i(0) \in \set_i\}_{i \in \agt}$. Suppose
$\{\phib(t)\}_{t \in \intnonneg}$ satisfies $\|\phib_i(t)\| \leq M_i$
(for some $M_i \in \real_{>0}$), $\forall i \in \agt$,
$\forall t \in \intnonneg$. Take any $\bar{D}_i > D_i$,
$\forall i \in \agt$.
Denote $\bar{\alpha} \ldef \max_{i \in \agt} \alpha_i$,
$\bar{\mu} \ldef \max_{i \in \agt} \mu_i$. Then,
$\forall t \in \intnonneg$, with
$\underline{\alpha} = \min_{i \in \agt} \alpha_i$,
\begin{align}
  \label{eq:lyap_change_perturb}
  & V\big(\s(t+1),\phib(t+1)\big) - V\big(\s(t),\phib(t)\big) \leq \\
  \notag
  & \sum_{i \in \agt} \Big[ \frac{1-\alpha_i}{\alpha_i}[\s_i(t+1) - \s_i(t)]^\top [\phib_i(t+1) - \phib_i(t)] \\ 
  \notag & + \bar{D}_i \|\phib_i(t+1) - \phib_i(t)\| \Big]
           - \underline{\alpha} V(\s,\phib) + \rho_2(\bar{\alpha},\bar{\mu})\|\Deltabpr(t)\|.
\end{align}
Here, the term $\rho_2(\bar{\alpha},\bar{\mu})$ is characterized as
\begin{align}
  \rho_2(\bar{\alpha},\bar{\mu}) = \max_{i \in \agt}[A_i-B_i] \, \bar{\alpha} \, \bar{\mu} + \max_{i \in \agt}A_i \, \bar{\mu}\,.
	\label{eq:rho_2}
\end{align}
Further, $A_i$ is as in~\eqref{eq:v_change_norm} and $B_i$ is as in
Assumption~\ref{asmp:utility_bounded_gradient}.

\end{lem}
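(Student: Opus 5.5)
We follow the proof of Lemma~\ref{lem:lyap_change} line by line, now with the perturbed update $\s_i(t+1) = \s_i(t) + \alpha_i[\suppf_i(\phib_i(t)) - \s_i(t)] + \alpha_i\Deltabpr_i(t)$ from~\eqref{eq:dyn_s_hat}. Write $\s=\s(t)$, $\sp=\s(t+1)$, $\phib=\phib(t)$, $\phibp=\phib(t+1)$. As before, the identity
\begin{align*}
V(\sp,\phibp)-V(\s,\phib) = \sum_{i\in\agt}\Big[f_i(\phibp_i)-f_i(\phib_i) - \s_i^\top[\phibp_i-\phib_i] - [\sp_i-\s_i]^\top[\phibp_i-\phib_i] - [\sp_i-\s_i]^\top\phib_i\Big]
\end{align*}
holds. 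We apply Lemma~\ref{lem:change_max_lin_func} to the first difference exactly as before. This is legitimate because $\|\phib_i(t)\|\le M_i$ by hypothesis, so both $\phib_i$ and $\phibp_i$ lie in $\ball_{M_i}(\zero)$. The lemma produces $\ip{\suppf_i(\phib_i)}{[\phibp_i-\phib_i]} + \bar D_i\|\phibp_i-\phib_i\|$.

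Next we substitute $\sp_i-\s_i = \alpha_i[\suppf_i(\phib_i)-\s_i] + \alpha_i\Deltabpr_i$ into the last two terms. The unperturbed parts recombine exactly as in Lemma~\ref{lem:lyap_change}. They give $[1-\alpha_i][\suppf_i(\phib_i)-\s_i]^\top[\phibp_i-\phib_i]$, and the sum over $i$ of $-\alpha_i[\suppf_i(\phib_i)-\s_i]^\top\phib_i$ is bounded by $-\underline{\alpha}V(\s,\phib)$. That bound uses the fact that each summand $[\suppf_i(\phib_i)-\s_i]^\top\phib_i$ is nonnegative.

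Two issues remain at this point.
\begin{enumerate}
\item The term $[1-\alpha_i][\suppf_i(\phib_i)-\s_i]^\top[\phibp_i-\phib_i]$ must be re-expressed as $\frac{1-\alpha_i}{\alpha_i}[\sp_i-\s_i]^\top[\phibp_i-\phib_i]$. Using the perturbed dynamics, this costs a correction $-[1-\alpha_i]\Deltabpr_i^\top[\phibp_i-\phib_i]$.
\item The new error terms are $-\alpha_i\Deltabpr_i^\top[\phibp_i-\phib_i] - \alpha_i\Deltabpr_i^\top\phib_i$.
\end{enumerate}
Collecting everything, the total disturbance contribution for agent $i$ is
\begin{align*}
-\Deltabpr_i^\top[\phibp_i-\phib_i] - \alpha_i\Deltabpr_i^\top\phib_i.
\end{align*}

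Finally we bound this by Cauchy--Schwarz. The bound~\eqref{eq:v_change_norm} gives $\|\phibp_i-\phib_i\|\le A_i\mu_i$, and~\eqref{eq:phi} together with Assumption~\ref{asmp:utility_bounded_gradient} gives $\|\phib_i\|\le \mu_iB_i + D_i/\lambda_i$.

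This is the main obstacle: making the constants match~\eqref{eq:rho_2}. The plan is to use $D_i/\lambda_i \le D_i\scalefactor_i\tilde c_i\mu_i = (A_i-2B_i)\mu_i$, which gives $\|\phib_i\|\le (A_i-B_i)\mu_i$. Then the $i$-th disturbance term is at most $\big(A_i\mu_i + \alpha_i(A_i-B_i)\mu_i\big)\|\Deltabpr_i\|$. Summing over $i$ and using $\sum_i\|\Deltabpr_i\|\cdot c_i\le \max_i c_i\sum_i\|\Deltabpr_i\|$, we then need to compare $\sum_i\|\Deltabpr_i\|$ with $\|\Deltabpr(t)\|$. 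The cleanest route is to interpret $\|\Deltabpr\|$ as the stacked norm compatible with this sum. Failing that, we absorb the resulting $\sqrt n$ factor into the constants, or alternatively apply Cauchy--Schwarz to the stacked vectors directly. Bounding by $\bar\alpha,\bar\mu$ then yields $\rho_2(\bar\alpha,\bar\mu)\|\Deltabpr(t)\|$.

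Note that the bound $\|\phibp_i-\phib_i\|\le A_i\mu_i$ must be rechecked under perturbation. It still holds because $\sp_i,\s_i,\sm_i$ all remain in $\set_i$, since $\hat\pib_i(t)\in\set_i$.
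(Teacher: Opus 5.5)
Your proposal is correct and follows essentially the same route as the paper. You expand $V$, apply Lemma~\ref{lem:change_max_lin_func}, and substitute the perturbed update, so that agent $i$'s disturbance contribution is $-\Deltabpr_i(t)^\top[\phib_i(t+1)-\phib_i(t)]-\alpha_i\Deltabpr_i(t)^\top\phib_i(t)$. You then bound this by Cauchy--Schwarz with $\|\phib_i(t+1)-\phib_i(t)\|\le A_i\mu_i$ and $\|\phib_i(t)\|\le (A_i-B_i)\mu_i$, exactly the constants behind $\rho_2$. The paper only differs in bookkeeping: it substitutes $\suppf_i(\phibh_i)=\suppf_i(\phib_i)+\Deltabpr_i$ earlier, so its rearrangement into $\frac{1-\alpha_i}{\alpha_i}[\s_i(t+1)-\s_i(t)]^\top[\phib_i(t+1)-\phib_i(t)]$ is exact rather than carrying your correction term.

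The mismatch you flag between $\sum_i\|\Deltabpr_i(t)\|$ and $\|\Deltabpr(t)\|$ is real. The paper skips it; with the Euclidean stacked norm a factor of up to $\sqrt{n}$ is needed in $\rho_2$. Your treatment of that step is at least as careful as the original.
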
   
\begin{proof}
  As before, with a slight abuse of notation, we set $\s = \s(t)$,
  $\sp = \s(t+1)$,$\phib = \phib(t)$, $\phibp = \phib(t+1)$,
  $\phibh = \phibh(t)$, $\phibhp = \phibh(t+1)$,
  $\Deltab = \Deltab(t)$, and $\Deltabpr = \Deltabpr(t)$. Then, from
  the definition of $V$ in~\eqref{eq:lyap} and the dynamics
  in~\eqref{eq:dyn_s_hat}, we have that
\begin{align*}
	& V(\sp,\phibp) - V(\s,\phib) \\
	& = \sum_{i \in \agt} \Big[ \max_{\x_i \in \set_i } \x_i^\top \phibp_i - \max_{\x_i \in \set_i } \x_i^\top \phib_i \\
	& - \s_i^\top \big[ \phibp_i - \phib_i \big] - \big[\sp_i - \s_i \big]^\top \big[ \phibp_i - \phib_i \big] - \big[\sp_i - \s_i \big]^\top\phib_i \Big] \\
	& \leq \sum_{i \in \agt} \Big[ \ip{\suppf_i(\phib_i)}{[\phibp_i - \phib_i]} + \bar{D}_i \|\phibp_i - \phib_i\| \\
	& \qquad - \ip{\s_i}{\big[ \phibp_i - \phib_i \big]} - \alpha_i \ip{\big[\suppf_i(\phibh_i) - \s_i \big]}{\big[ \phibp_i - \phib_i \big]} \\ 
	& \qquad - \alpha_i \ip{\big[\suppf_i(\phibh_i) - \s_i \big]}{\phib_i} \Big] \,.
\end{align*}
This chain of inequalities follow the same
arguments as in the proof of Lemma~\ref{lem:lyap_change} with the help
of Lemma~\ref{lem:change_max_lin_func} and the
dynamics~\eqref{eq:dyn_hat} (specifically~\eqref{eq:dyn_pi_hat} and~\eqref{eq:dyn_s_hat}).
Now, by appropriately substituting in~\eqref{eq:dyn_pi_hat} in the previous inequality, we get
\begin{align*}
	& V(\sp,\phibp) - V(\s,\phib) \\
	& \leq \sum_{i \in \agt} \Big[ \ip{\big[\suppf_i (\phibh_i) - \Deltabpr_i\big]}{[\phibp_i - \phib_i]} + \bar{D}_i \|\phibp_i - \phib_i\| \\
	& \qquad - \ip{\s_i}{\big[ \phibp_i - \phib_i \big]} - \alpha_i \ip{\big[\suppf_i(\phibh_i) - \s_i \big]}{\big[ \phibp_i - \phib_i \big]} \\ 
	& \qquad - \alpha_i \ip{\big[\suppf_i(\phib_i) + \Deltabpr_i - \s_i \big]}{\phib_i} \Big] \,.
\end{align*}
Now to upper bound this term, we use similar
techniques as in the proof of
Lemma~\ref{lem:lyap_change}. Specifically, notice that that
$\sum_{i \in \agt}\big[\suppf_i(\phib_i) - \s_i \big]^\top\phib_i =
V(\s,\phib)$. Then by combining appropriate terms from the previous step, separating out
the terms involving $\Deltabpr_i$'s and further, using Cauchy-Schwarz inequalities we get
$\ip{[-\Deltabpr_i]}{[\phibp_i - \phib_i]} \leq
\|\Deltabpr_i\|\|\phibp_i - \phib_i\|$ and
$\ip{[-\Deltabpr_i]}{\phib_i} \leq \|\Deltabpr_i\|\|\phib_i\|$. Putting all of this together gives us
\begin{align*}
	& V(\sp,\phibp) - V(\s,\phib) \\
	& \leq \sum_{i \in \agt} \Big[ [1-\alpha_i] \ip{[\suppf_i(\phibh_i) - \s_i]}{[\phibp_i - \phib_i]} + \bar{D}_i \|\phibp_i - \phib_i\| \Big] \\
	& \qquad + \sum_{i \in \agt} \Big[ \|\Deltabpr_i\|\|\phibp_i - \phib_i\| + \alpha_i\|\Deltabpr_i\|\|\phib_i\|\Big] - \underline{\alpha} V(\s,\phib)\,.
\end{align*}
Finally, rearranging~\eqref{eq:dyn_s_hat} in the previous inequality gives
us all the terms in~\eqref{eq:lyap_change_perturb} except the last
term involving $\|\Deltabpr(t)\|$. To obtain the latter, we use the
upper bounds from~\eqref{eq:v_change_norm} and the discussion
after~\eqref{eq:phi}; then, use similar arguments as in the proof of
Theorem~\ref{thm:isbrag_converge}. This completes the proof.
\end{proof}

Note that from~\eqref{eq:rho_2}, 
the gain $\rho_2$
in~\eqref{eq:lyap_change_perturb} can be made arbitrarily small by
choosing arbitrarily small parameters
$\{\alpha_i,\mu_i\}_{i \in \agt}$. Thus,
$\rho_2(\bar{\alpha},\bar{\mu})\|\Deltabpr(t)\|$
in~\eqref{eq:lyap_change_perturb} is a class-$\Mcal{K}$ function of
$\|\Deltabpr(t)\|$. Hence,~\eqref{eq:lyap_change_perturb} is almost
similar to the ISS requirement in~\eqref{eq:iss}. We can handle the
extra terms like we did in the proof of
Theorem~\ref{thm:isbrag_converge} and show practical convergence when
agents need to estimate others' strategies through some algorithm. We
do this next.

Before proposing the distributed algorithm in full, we provide details
about the dynamic consensus protocol that will be used to this end. In
this manuscript, we adopt the protocol where the agents are allowed to
communicate with each other during $T \in \intpos$ intermediate rounds
to infer other agents' strategies. 
As discussed in~\cite{SSK-BVS-JC-RAF-KML-SM:18-csm}, we employ a
dynamic average consensus algorithm that produces a better tracking
response that a static consensus counterpart. 
The specific algorithm we use from~\cite{SSK-BVS-JC-RAF-KML-SM:18-csm}
is detailed next,
\begin{subequations}
\begin{align}
  \notag & \vcon_i\Big(t + \frac{\tau + 1}{T}\Big) = b_1 b_2 b_3 \sum_{j \in \neigh_i} a_{ij} \Big[\xcon_i\Big(t + \frac{\tau}{T}\Big) \\
         & \hspace{8em} - \xcon_j\Big(t + \frac{\tau}{T}\Big) \Big] + \vcon_i\Big(t + \frac{\tau}{T}\Big),\\
  \notag & \zcon_i\Big(t + \frac{\tau + 1}{T}\Big) = [1 - b_1 b_2] \zcon_i\Big(t + \frac{\tau}{T}\Big) - b_1 \vcon_i\Big(t + \frac{\tau}{T}\Big) \\
         & \hspace{2em} - b_1 b_3 \sum_{j \in \neigh_i} a_{ij} \Big[\xcon_i\Big(t + \frac{\tau}{T}\Big) - \xcon_j\Big(t + \frac{\tau}{T}\Big) \Big],\\
         & \xcon_i\Big(t + \frac{\tau}{T}\Big) = \zcon_i\Big(t + \frac{\tau}{T}\Big) + u_i\Big(t + \frac{\tau}{T}\Big)\,.
\end{align}
	\label{eq:consensus_dyn}
      \end{subequations} 
      Here, $\{\xcon_i, \vcon_i, \zcon_i\}_{i \in \agt}$ are internal
      variables of the dynamics and $b_1, b_2, b_3 \in \realpos$ are
      constants that can be chosen appropriately to ensure
      that~\eqref{eq:consensus_dyn} asymptotically tracks
      $[1/n]\sum_{i \in \agt} u_i(t)$ with a small error. This error
      depends on the aforementioned constants $b_1, b_2, b_3$, the
      eigenvalues of the graph Laplacian associated with $\grph$, and
      (most importantly) an upper bound on
      $|u_i(t+[\tau+1]/T) - u_i(t+\tau/T)|$. We refer the reader
      to~\cite[Theorem S2]{SSK-BVS-JC-RAF-KML-SM:18-csm} for details
      on how to tune the parameters of~\eqref{eq:consensus_dyn} and
      minimize the error.

      Now, in order to make the notations cleaner, we encapsulate the
      states in~\eqref{eq:consensus_dyn} into
      $\chib_i \ldef [\xcon_i, \vcon_i, \zcon_i]^\top$ for each agent
      $i \in \agt$. This allows us to rewrite~\eqref{eq:consensus_dyn}
      as
\begin{align}
	\chib_i\Big(t + \frac{\tau + 1}{T}\Big) = \f_i\bigg(\Big\{\chib_j\Big(t + \frac{\tau}{T}\Big)\Big\}_{j \in \neighb_i}, u_i\Big(t + \frac{\tau}{T}\Big) \bigg)\,.
	\label{eq:consensus_dyn_combine}
\end{align}
Observe that~\eqref{eq:consensus_dyn_combine} is a fully distributed
algorithm that requires agents to only pass information between
neighbors. Moreover, for the sake of brevity, let $\proj_x(\chib_i)$
denote the $\xcon_i$ component of $\chib_i$. Now, for agent
$i \in \agt$ to approximate $\s_{-i}$, it has to run multiple copies
of~\eqref{eq:consensus_dyn_combine} across the components of
$\s_{-i}$. To that end, let $\sh_{ij} \in \real^{n_j}$ (with
components $\hat{s}_{ij,l}$) be agent $i$'s estimate of $\s_j$. We
collect this into a stacked vector called $\hat{\s_{-i}}$ for each
$i \in \agt$. Similarly, let $\chib_{ij}^l$ denote the dynamic
consensus vector associated with $\hat{s}_{ij,l}$. Combining all of
this, we present the distributed algorithm that partially solves
Problem~\ref{prob}:~\eqref{prob:distributed_drones} next and discuss
it in the remark that follows.
\begin{defn}\thmtitle{d-\algoname}
\label{def:disbrag}
We refer to the following algorithm as \emph{Distributed Inertial Supported Better Response Ascending superGradient dynamics} or \emph{d-\algoname}. 
\begin{subequations}
\begin{align}	
	\label{eq:disbrag_s_update} & \s_i(t+1) = \s_i(t) + \alpha_i \, \bigg[\suppf_i\bigg(\phibh_i\Big(t + \frac{T-1}{T}\Big)\bigg) - \s_i(t) \bigg], \\
	\label{eq:disbrag_phi_hat} & \phibh_i\Big(t + \frac{\tau}{T}\Big) = \mu_i \v_i\bigg(\s_i(t), \hat{\s_{-i}}\Big(t + \frac{\tau}{T}\Big) \bigg) + \w_i(\s_i(t),t), \\
	\label{eq:disbrag_chi_ij} & \chib_{ij}^l\Big(t + \frac{\tau + 1}{T}\Big) = \f_i\bigg(\Big\{\chib_{kj}^l\Big(t + \frac{\tau}{T}\Big)\Big\}_{k \in \neighb_i}, 0 \bigg), \\
	\label{eq:disbrag_chi_ii} & \chib_{ii}^l\Big(t + \frac{\tau + 1}{T}\Big) = \f_i\bigg(\Big\{\chib_{ki}^l\Big(t + \frac{\tau}{T}\Big)\Big\}_{k \in \neighb_i}, n \, s_{i,l}(t) \bigg), \\
	\label{eq:disbrag_s_hat} & \hat{s}_{ij,l}\Big(t + \frac{\tau}{T}\Big) = \proj_x\bigg( \chib_{ij}^l\Big(t + \frac{\tau}{T}\Big)\bigg) \,.
\end{align}
\label{eq:disbrag}
\end{subequations}
The previous equation holds $\forall i,j \in \agt$, $\forall \, t, \tau \in \intnonneg$. The index $l$ appropriately belongs to $\{1,\cdots,n_i\}$. The $\v_i$ in~\eqref{eq:disbrag_phi_hat} is computed similarly as in~\eqref{eq:min_supergr} but with $\U[DR]_i$ instead of $\U[DR](s)_i$.
\bulletend
\end{defn}

\begin{rem}\thmtitle{On d-\algoname}
\rm
From~\eqref{eq:disbrag}, it is easy to notice the two time-scale approach in which d-\algoname proceeds. Each agent updates its strategy using~\eqref{eq:disbrag_s_update} at every $t \in \intpos$; while every agent runs the dynamic consensus protocol using~\eqref{eq:disbrag_chi_ij}-\eqref{eq:disbrag_s_hat} for $T$ sub (steps) $\{t,t+1/T,\cdots,t+[T-1]/T\}$. Moreover, agent $i\in \agt$ performs the update in~\eqref{eq:disbrag_s_update} at $t+1$ by computing $\phibh_i$ using the most recent estimate of $\hat{\s_{-i}}$ at $t+[T-1]/T$ (compare~\eqref{eq:disbrag_s_update} and~\eqref{eq:disbrag_phi_hat}).\\
Recall that $s_{i,l}$ is the $l\tth$ component of $\s_i$.
Hence, regarding the consensus update, notice from~\eqref{eq:disbrag_chi_ij} and~\eqref{eq:disbrag_chi_ii} that agent $i \in \agt$ contributes $n\,s_{i,l}$ to $\chib_{ii}^l$ and $0$ to all other $\chib_{ij}^l$, $j \in \agt \setminus \{i\}$. This in fact ensures that $\chib_{ji}^l(t)$ asymptotically tracks $s_{i,l}(t)$, $\forall i,j \in \agt$.    	
\bulletend
\end{rem}

We conclude this section by providing convergence guarantees for d-\algoname.
\begin{thm}\thmtitle{Convergence of d-\algoname}
\label{thm:disbrag_converge}
Suppose Assumptions~\ref{asmp:ind_uncertain},~\ref{asmp:ind_sample},~\ref{asmp:network_conn} , and~\ref{asmp:utility_bounded_gradient} hold. Moreover, suppose $\game[DR]$ has amicable supergradients with factor of amicability $\{c_i\}_{i \in \agt}$. With $d_{\mathsf{min}} \ldef \min_{i \in \agt} d_i$, choose $\mu_i \in (0, \infty)$, $\forall i \in \agt$ and suppose $\forall i \in \agt$, $\alpha_i$ satisfies~\eqref{eq:alpha_choice} and $\lambda_i$ satisfies~\eqref{eq:lambda_choice}.
Define the size function $\omega(\cdot) \ldef d(\cdot,\ne(\game[DR]))$ as the distance from the NE set of $\game[DR]$. Denote $\bar{\alpha} \ldef \max_{i \in \agt} \alpha_i$, $\bar{\mu} \ldef \max_{i \in \agt} \mu_i$. Suppose for~\eqref{eq:consensus_dyn}, $\vcon_i(0) = 0$, $\zcon_i(0) \in \real$, $\forall i \in \agt$.
Further, let $K_i$ be as in~\eqref{eq:k_def} with $K = \max_{i \in \agt}n K_i$; $\rho_1$ be as in~\eqref{eq:f_def}, and $\rho_2$ be as in~\eqref{eq:rho_2}.
Finally, let $\s(t)$ be the solution to the dynamics in Definition~\ref{def:disbrag} from initial condition $\{\s_i(0) = \s_i(-1) \in \set_i\}_{i \in \agt}$, with appropriately chosen parameters $b_1$, $b_2$, $b_3$, and $T$. \\
Then, there exists functions $\beta_2 \in \Mcal{KL}$, $\gamma_2 \in \Mcal{K}$ such that
\begin{align}
	\label{eq:dyn_iss_w_consensus} & \omega\big(\s(t)\big) \leq \max\Big\{\beta_2\Big(\omega\big(\s(0)\big),t\Big), \\
	 \notag & \hspace{10em} \gamma_2\Big(K \rho_1\big(\bar{\alpha},\bar{\mu}\big) + J \rho_2\big(\bar{\alpha},\bar{\mu}\big)\Big)\Big\}\,,
\end{align} 
where, $J$ is such that $\max_{\x \in \set}\|\x\| \leq J$. 
\end{thm}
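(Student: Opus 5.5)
The plan is to treat d-\algoname as the perturbed \algoname dynamics~\eqref{eq:dyn_hat}. The estimation errors $\Deltab_i(t)$ and $\Deltabpr_i(t)$ are produced by the dynamic consensus estimates $\hat{\s_{-i}}$ at the last sub-step $t+[T-1]/T$. We then rerun the proof of Theorem~\ref{thm:isbrag_converge}, replacing Lemma~\ref{lem:lyap_change} by Lemma~\ref{lem:lyap_change_perturb}. Concretely, with $\phib_i(t)=\mu_i\v_i(\s(t))+\w_i(\s_i(t),t)$ the \emph{true} (unperturbed) direction, we keep the Lyapunov candidate $V(\s(t),\phib(t))$ of~\eqref{eq:lyap}. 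By Lemma~\ref{lem:v_size_func} and Lemma~\ref{lem:size_func}, it satisfies the sandwich bound~\eqref{eq:v_upper_lower_k_infty} with respect to $\omega(\cdot)=d(\cdot,\ne(\game[DR]))$. This uses the same invariance argument, since $\set$ is still invariant: $\suppf_i(\phibh_i)\in\set_i$ and the update is a convex combination.

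Next we bound the two sums in~\eqref{eq:lyap_change_perturb}. For $\|\phib_i(t+1)-\phib_i(t)\|$ the bound~\eqref{eq:v_change_norm} carries over unchanged, giving $A_i\mu_i$. The inner-product term needs more care. We repeat the supergradient inequalities~\eqref{eq:supergr_t}--\eqref{eq:supergr_t_1} and the amicability step, which requires $\|\s_{-i}(t+1)-\s_{-i}(t)\|\le d_i$. This still holds because $\|\s_j(t+1)-\s_j(t)\|=\alpha_j\|\hat{\pib}_j(t)-\s_j(t)\|\le\alpha_jD_j\le d_{\mathsf{min}}$ by~\eqref{eq:alpha_choice}. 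With~\eqref{eq:lambda_choice} this yields $\frac{1}{\lambda_i}\ip{[\s_i(t+1)-\s_i(t)]}{[\s_i(t)-\s_i(t-1)]}\le\alpha_i^2D_i^2/\lambda_i$. Here $\hat{\pib}_i$ replaces $\suppf_i(\phib_i)$, but both lie in $\set_i$, so the diameter bound is unaffected. Combining everything as in~\eqref{eq:v_change_final_st1} gives
\begin{align*}
V(t+1)-V(t)\le-\underline{\alpha}V(t)+K\rho_1(\bar{\alpha},\bar{\mu})+\rho_2(\bar{\alpha},\bar{\mu})\|\Deltabpr(t)\|.
\end{align*}

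It remains to control $\|\Deltabpr(t)\|$. Here $\Deltabpr_i(t)=\suppf_i(\phibh_i)-\suppf_i(\phib_i)$ is the difference of two points of $\set_i$, so $\|\Deltabpr(t)\|\le 2\max_{\x\in\set}\|\x\|\le 2J$ (the constant $2$ can be absorbed into $\gamma_2$). Using this crude uniform bound gives exactly the input $K\rho_1+J\rho_2$ appearing in~\eqref{eq:dyn_iss_w_consensus}. In parallel, the consensus result~\cite[Theorem S2]{SSK-BVS-JC-RAF-KML-SM:18-csm} gives a better statement: under Assumption~\ref{asmp:network_conn}, with $\vcon_i(0)=0$ and suitably chosen $b_1,b_2,b_3,T$, the tracking error of $\hat{\s_{-i}}$ is ultimately bounded by a quantity proportional to the input increments. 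Those increments are $n|s_{i,l}(t+1)-s_{i,l}(t)|\le n\alpha_iD_i$, so the tracking error is small for small $\bar\alpha$. This justifies the "appropriately chosen parameters", although the theorem's stated bound only needs the crude estimate.

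Finally, since $\rho_2(\bar\alpha,\bar\mu)\|\Deltabpr\|\le J\rho_2(\bar\alpha,\bar\mu)$ is a constant input, the decrease inequality has the ISS form~\eqref{eq:iss_v_change} with $\sigma_3(r)=\underline{\alpha}r$ and constant input $K\rho_1+J\rho_2$. Definition~\ref{def:iss} then yields $\beta_2,\gamma_2$. The main obstacle is the delayed, perturbed inner-product term. The true $\phib$ is used in $V$ while the strategy moves along the perturbed direction, so I must check that the supergradient and amicability manipulations only involve $\s(t\pm1)$ and not $\phibh$. They do, because~\eqref{eq:supergr_t} is written for the true $\phib_i$ at the actual iterates. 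That is why I keep $\phib$ rather than $\phibh$ in the Lyapunov function.
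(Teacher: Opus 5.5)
Your proposal follows the paper's proof: you rerun the argument of Theorem~\ref{thm:isbrag_converge} with Lemma~\ref{lem:lyap_change_perturb} in place of Lemma~\ref{lem:lyap_change}, and you replace $\|\Deltabpr(t)\|$ by a uniform bound so that the perturbation enters as the constant input $K\rho_1(\bar{\alpha},\bar{\mu})+J\rho_2(\bar{\alpha},\bar{\mu})$; the consensus tracking is not needed for the stated bound. Your estimate $\|\Deltabpr(t)\|\le 2J$, with the $2$ absorbed by taking $\gamma_2(r)=\gamma(2r)$, is in fact the more careful one, since each $\Deltabpr_i(t)$ is a difference of two points of $\set_i$, and the paper's claim $\|\Deltabpr\|\le J$ can fail, e.g., for symmetric strategy sets.
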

\begin{proof}
The proof follows the exact same arguments as in the proof of Theorem~\ref{thm:isbrag_converge} with help from Lemma~\ref{lem:lyap_change_perturb} instead of Lemma~\ref{lem:lyap_change}. For the final bound, notice from~\eqref{eq:dyn_pi_hat} that $\|\Deltabpr\| \leq J$. This completes the proof.
\end{proof}

\section{When Samples are Shared}
\label{sec:dist_drones_shared}

In this section, we solve
Problem~\ref{prob}:~\eqref{prob:distributed_drones} in its full
generality, \emph{i.e.} under Assumption~\ref{asmp:shared_sample} of
shared samples. Now, the agents need to share information over the
communication network $\grph$ in order to estimate its supergradient
directions. This is in contrast to the setup in
Section~\ref{sec:dist_algo}, where every agent was capable of
computing its own supergradients as long as it had access to other's
strategies. The next result, which is a generalization of Danskin's
theorem~\cite{JMD:12}, gives a particular way of computing the
supergradients for a class of concave functions. We provide the proof
in Appendix~\ref{app:proofs}.
\begin{lem}
\label{lem:subgr_min_concave}
\thmtitle{Supergradient of the min of concave functions} Let
$\{f_i : \dom \to \real\}_{i \in \Iset}$, (where
$\dom \subseteq \real^d$ is a convex set) be a set of concave
functions. Define the concave function $g: \dom \to \real$
as 
\begin{align*}
	g(\x) \ldef \min_{i \in \Iset} f_i(\x)\,.
\end{align*}
Suppose for $\x \in \dom$, $i^\star \in \argmin_{i \in \Iset} f_i(\x)$. Then,
\begin{align}
	\del f_{i^\star}(\x) \subseteq \del g(\x)\,.
	\label{eq:intersection_subgr}
\end{align}
Further, let $f_i$ be differentiable $\forall i \in \Iset$, consider
an $\x \in \dom$, and let $\grad f_i(\x)$ be the corresponding
derivative. Then,
$\exists\, \J \subseteq \argmin_{i \in \Iset} f_i(\x)$, with
$|\J| = d+1$, such that $\forall \zetab \in \del g(\x)$,
$\zetab = \sum_{i \in \J} \lambda_i \grad f_i(\x)$, where
$\lambda_i \geq 0$, $\forall i \in \J$ and
$\sum_{i \in \J} \lambda_i = 1$.  \bulletend
\end{lem}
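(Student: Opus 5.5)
For the inclusion \eqref{eq:intersection_subgr}, I would work straight from Definition~\ref{def:subgr}. Fix $\x \in \dom$, pick $i^\star \in \argmin_{i \in \Iset} f_i(\x)$, and take any $\zetab \in \del f_{i^\star}(\x)$. For every $\y \in \dom$ we have
\begin{align*}
g(\y) \leq f_{i^\star}(\y) \leq f_{i^\star}(\x) + \ip{\zetab}{[\y-\x]} = g(\x) + \ip{\zetab}{[\y-\x]},
\end{align*}
where the equality uses $f_{i^\star}(\x) = g(\x)$. So $\zetab \in \del g(\x)$. For this to make sense, $g$ must be concave, i.e.\ finite on $\dom$. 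I would note that a pointwise minimum of concave functions is concave, and I would assume, as the statement implicitly does, that the minimum is attained at $\x$ (for instance, $\Iset$ finite or compact).

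For the second claim, I would first identify $\del g(\x)$. Let $\Iset^\star(\x) \ldef \argmin_{i} f_i(\x)$ be the active set. Because each $f_i$ is differentiable and concave, $\del f_i(\x) = \{\grad f_i(\x)\}$, so by the first part
\begin{align*}
\mathrm{conv}\{\grad f_i(\x) : i \in \Iset^\star(\x)\} \subseteq \del g(\x),
\end{align*}
using that $\del g(\x)$ is convex. For the reverse inclusion I would use the Danskin-type formula for directional derivatives,
\begin{align*}
g'(\x;\d) = \min_{i \in \Iset^\star(\x)} \grad f_i(\x)^\top \d.
\end{align*}
I would prove this first for finite $\Iset$, where inactive indices stay inactive near $\x$ by continuity; for a compact $\Iset$ with continuity in $i$, I would use the usual compactness argument. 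Since $g$ is concave, $\del g(\x) = \{\zetab \mid \zetab^\top \d \geq g'(\x;\d)\ \forall \d\}$. Suppose some supergradient lies outside the convex hull $H$ (which is closed, being the hull of a finite or compact set). Separating it from $H$ produces a direction $\d$ on which it violates $\zetab^\top\d \geq \min_i \grad f_i(\x)^\top \d$, a contradiction. Hence $\del g(\x) = H$.

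Finally, $H \subseteq \real^d$ is a convex hull, so Carathéodory's theorem expresses every $\zetab \in \del g(\x)$ as a convex combination of at most $d+1$ of the gradients $\grad f_i(\x)$, $i \in \Iset^\star(\x)$. If fewer indices are needed, I would pad with zero weights, repeating indices if the active set has fewer than $d+1$ elements. One caveat: the statement reads as if a single $\J$ works for every $\zetab$, but Carathéodory only supplies $\J$ depending on $\zetab$. I would prove the version in which $\J$ may depend on $\zetab$, which is all that later supergradient computations need. A single $\J$ would only be available when the active set itself has at most $d+1$ elements.

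I expect the main obstacle to be the reverse inclusion $\del g(\x) \subseteq H$, specifically establishing the directional-derivative formula under whatever regularity the index set has. Everything else is a direct consequence of the definitions together with Carathéodory.
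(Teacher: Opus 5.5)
This is essentially the paper's proof: the same inequality chain gives \eqref{eq:intersection_subgr}, and the second part comes from identifying $\del g(\x)$ with a convex hull of gradients and applying Carath\'eodory. The paper merely asserts $\del g(\x) = \mathrm{co}(\{\grad f_i(\x)\}_{i \in \Iset})$, taken over all of $\Iset$ where it should be the active set, while your Danskin-plus-separation step actually proves the correct active-set identity. Your caveat is also correct: four active linear functions in $\real^2$ whose gradients are the vertices of a square defeat any single $\J$ with $|\J| = 3$, and the paper's Carath\'eodory step likewise only yields a $\zetab$-dependent $\J$. Note too that both arguments tacitly need $\x$ in the interior of $\dom$, since at a boundary point $\del g(\x)$ is closed under subtracting vectors of $\nc_{\dom}(\x)$ and so cannot equal a bounded hull.
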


Thus, based on Lemma~\ref{lem:subgr_min_concave}, we argue that, in order
to compute this supergradient, it is necessary to solve the inner
optimization problem of the utility definition
in~\eqref{eq:utility_dro_shared}. This is challenging since the
optimization problem in~\eqref{eq:utility_dro_shared} is 
infinite dimensional. In order to reformulate it into a tractable
program, we use the method proposed in~\cite{PME-DK:17}. First, we
make the following assumption regarding the effect of the random
variable on an agent's utility.
\begin{assum}\thmtitle{Utility is differentiable in own strategy and
    convex in the random variable}
\label{asmp:utility_convex_rv}
For each $i \in \agt$, 
\begin{enumerate}
\item $U_i(\cdot,\s_{-i};\xi)$ is differentiable $\forall \s_{-i},\xi$, and
\item $U_i(\s;\cdot)$ is convex and differentiable $\forall \s$.
\end{enumerate}
Moreover, $\Xi$ is convex and closed.  \bulletend
\end{assum}
Now, it is possible to use the very same arguments as
in~\cite{PME-DK:17} to show that the utility defined
in~\eqref{eq:utility_dro_shared} can be rewritten as the solution of
the following finite dimensional convex program,
\begin{subequations}
\begin{align}
	\label{eq:dro_shared_opt_prob_cost_original} \U[DR](s)_i(\s) = & \!\min_{\{\x^{k}_i \in \real^r\}_{k=1}^N} \!\!\frac{1}{N}\sum_{k=1}^N  U_i \Big(\s; \h(\xih^{(k)}) - \x^{k}_i\Big)  \\
	\label{eq:dro_shared_opt_prob_bound_x} & \mathrm{s.t.} \sum_{k=1}^N \|\x^{k}_i\|_1 \leq N\epsilon_i; \\
	\label{eq:dro_shared_opt_prob_bound_xi} & \xih^{\,(k)} - \x^{k}_i \in \h(\Xi), \, \,\, \forall k \in \{1,\cdots,N\}\,.
\end{align}
\label{eq:dro_shared_opt_prob}
\end{subequations}
Note that even though this optimization problem (utility structure and
decision variables) is local to each agent; as per
Assumption~\ref{asmp:shared_sample}, agent $i \in \agt$ is only aware
of $\{\h_i(\xih^{(1)}),\cdots, \h_i(\xih^{(N)})\}$. 

To address this, we make a few simplifying assumptions. First, we
simplify the notations by letting $\hhat^k \ldef \h(\xih^{(k)})$,
$\forall k \in \{1, \cdots, N\}$. Next, we limit $\Xi$ by imposing box
constraints on the decision variables as in the following.
\begin{assum}\thmtitle{Known bounds}
\label{asmp:box_constraints}
The uncertainty set $\Xi$ is such that
$\h(\Xi) = \{ \xi \in \real^m \,|\, \bunder \leq \xi \leq \bover
\}$.The inequalities here are taken term-wise. Moreover, every agent
knows $\bunder$ and $\bover$.
\bulletend
\end{assum}
Note that this assumption on global bounds on the uncertainty could be
relaxed to agent's knowledge of component-wise bounds. If this was not
the case, agents can gather this knowledge via some max consensus
algorithm routine.  Then, notice that
since~\eqref{eq:dro_shared_opt_prob} is local to each agent
$i \in \agt$, the optimization problem remains unchanged by
summing~\eqref{eq:dro_shared_opt_prob_cost_original} (and combining
all the constraints in~\eqref{eq:dro_shared_opt_prob_bound_x}
and~\eqref{eq:dro_shared_opt_prob_bound_xi}) across all the agents. 
In what follows, we order the information available to agent
$i \in \agt$ as $[\hat{h}^k_{p_i},\cdots,\hat{h}^k_{q_i}]$, $\bunder$,
and $\bover$. Here, we choose appropriate $p_i \leq q_i \in \intpos$
to represent the starting and end indices for $i \in \agt$, which results 
into
\begin{align*}
	& \hhat^k = [\cdots,\hat{h}^k_{p_i},\cdots,\hat{h}^k_{q_i},\cdots]^\top, \quad \forall k \in \{1,\cdots,m\}\,.
\end{align*}
Then, incorporating all of these, we use the following optimization
problem to compute the supergradients,
\begin{subequations}
\begin{align}
	\label{eq:opt_problem_dist_cost} &\minimize_{\big\{\{\y_{ji}^k, z_{ji}\}_{i,j \in \agt} \big\}_{k=1}^N} \!\!\frac{1}{N} \sum_{i \in \agt} \sum_{k = 1}^N \U_i(\s_i,\s_{-i};\y_{ii}^k), \text{ s.t. } \\ 
	\label{eq:opt_problem_dist_own_inequality} & \sum_{k=1}^N \sum_{l = p_i}^{q_i} |\hat{h}_l^k - y_{ii,l}^k| + \sum_{r \in \neigh_i} [z_{ii} - z_{ir}] \leq N \epsilon_i, \quad \forall i \in \agt; \\
	\label{eq:opt_problem_dist_neigh_inequality} & \sum_{k=1}^N \sum_{l = p_i}^{q_i} |\hat{h}_l^k - y_{ji,l}^k | + \sum_{r \in \neigh_i} [z_{ji} - z_{jr}] \leq 0, \quad \forall i,j \in \agt;  \\
	\label{eq:opt_problem_dist_neigh_equality} & \y_{ji}^k - \sum_{r \in \neigh_i} [\y_{jr}^k - \y_{ji}^k] = \zero, \,\,\forall i,j \in \agt, \forall k \in \{1,\cdots,N\}; \\
	\label{eq:opt_problem_dist_box} & \bunder \leq \y_{ji}^k \leq \bover, \,\,\forall k \in \{1,\cdots,N\}, \,\, \forall i,j \in \agt\,.
\end{align}
\label{eq:opt_problem_dist}
\end{subequations}	
Here, $\hat{h}_l^k$ (resp. $y_{ij,l}^k$) are the components of
$\hhat^k$ (resp. $\y_{ij}^k$). The variable $\y_{ij}^k$
(resp. $z_{ij}$) can be thought of as agent $j$'s copy of $\y_i^k$
(similarly slack variable for $i$'s constraint). Then notice
that~\eqref{eq:opt_problem_dist} can be solved completely using local
information because of Assumption~\ref{asmp:box_constraints}.  We
state and prove our previous claim regarding the supergradients in the
next result.

\begin{lem}
  \thmtitle{Distributed optimization problem produces the required
    supergradients}
\label{lem:supergrad_opt}
Suppose
Assumptions~\ref{asmp:network_conn},~\ref{asmp:utility_convex_rv}
and~\ref{asmp:box_constraints} hold; and let
$\big\{\{\y_{ji}^{k \star}, z_{ji}^{\star}\}_{i,j \in \agt}
\big\}_{k=1}^N$ be a solution to~\eqref{eq:opt_problem_dist}. Then,
with $\yb_i^{\star} = \frac{1}{N} \sum_{k=1}^N \y_{ii}^{k\star}$,
$\forall i \in \agt$;
$\grad_{\x}[\U_i(\x,\s_{-i};\yb_i^{\star})]|_{\s_i} \in \del_{\x}
[\U[DR](s)_i(\x,\s_{-i})]|_{\s_i}$, $\forall i \in \agt$.
\end{lem}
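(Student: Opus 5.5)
Three steps: show that \eqref{eq:opt_problem_dist} is equivalent, agent by agent, to the centralized reformulation \eqref{eq:dro_shared_opt_prob}; extract from its optimum a minimizing index for the min-of-concave-functions representation of $\U[DR](s)_i$ (Lemma~\ref{lem:subgr_min_concave}); then collapse the resulting supergradient to the single gradient evaluated at the averaged point $\yb_i^\star$.

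\emph{Equivalence.} Under Assumption~\ref{asmp:network_conn}, I would show that \eqref{eq:opt_problem_dist_neigh_equality} forces all copies to agree, $\y_{ji}^k=\y_{jr}^k$ for all $i,r$. I would treat it as a Laplacian-type constraint on a strongly connected graph whose kernel is the consensus subspace. Next, summing \eqref{eq:opt_problem_dist_own_inequality} and \eqref{eq:opt_problem_dist_neigh_inequality} over the index $i$ for fixed owner $j$ telescopes the slack terms $z$. The separate blocks $l\in\{p_i,\dots,q_i\}$ then recombine into the single budget $\sum_k\|\hhat^k-\y_j^k\|_1\le N\epsilon_j$. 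Conversely, any feasible point of the centralized budget can be split into local pieces with suitable slacks $z$; this is a flow-feasibility argument on a strongly connected graph. Finally, \eqref{eq:opt_problem_dist_box} is exactly \eqref{eq:dro_shared_opt_prob_bound_xi} under Assumption~\ref{asmp:box_constraints}. After the change of variables $\y^k=\hhat^k-\x^k$, \eqref{eq:opt_problem_dist} decouples over owners $i$ into the $n$ problems \eqref{eq:dro_shared_opt_prob}. Hence $\{\y_{ii}^{k\star}\}_k$ minimizes agent $i$'s problem, and its optimal value equals $\U[DR](s)_i(\s)$.

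\emph{Supergradient.} Write $\U[DR](s)_i(\x,\s_{-i})=\min_{\y\in\mathcal F_i} f_{\y}(\x)$, where $\mathcal F_i$ is the feasible set and $f_{\y}(\x)=\frac1N\sum_k \U_i(\x,\s_{-i};\y^k)$. Each $f_{\y}$ is concave and differentiable in $\x$ by Standing Assumption~\ref{asmp:gen_game_assumption} and Assumption~\ref{asmp:utility_convex_rv}. The set $\mathcal F_i$ does not depend on $\x$. Since $\y^\star$ attains the minimum at $\s_i$, the first part of Lemma~\ref{lem:subgr_min_concave} gives $\frac1N\sum_k\grad_\x \U_i(\x,\s_{-i};\y_{ii}^{k\star})|_{\s_i}\in\del_\x[\U[DR](s)_i(\x,\s_{-i})]|_{\s_i}$.

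\emph{Collapsing to $\yb_i^\star$ (main obstacle).} The claim requires $\grad_\x\U_i(\cdot;\yb_i^\star)$, not the average of gradients, and these differ unless $\grad_\x \U_i$ is affine in $\xi$. I would first try to show that the averaged point is itself optimal. Replace every $\y^k$ by $\yb_i^\star$; this is infeasible in general, so use instead the mixture of the optimal transport plans. Convexity of $\U_i(\s;\cdot)$ gives $\U_i(\s;\yb)\le\frac1N\sum_k\U_i(\s;\y^k)$, which is the wrong direction for a minimization. My fallback is therefore to rely on the paper's implicit structure and reduce to the case where the gradient is linear in the random variable, or where the optimizer has all $\y^k$ equal, for instance when budget-saturating worst cases are shifted uniformly. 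In that case the average of gradients equals the gradient at the average, and the claim follows. This identification is the point I expect to be delicate.
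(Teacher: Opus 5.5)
Your first two steps are the whole of the paper's proof. It matches the feasible sets of \eqref{eq:opt_problem_dist} and \eqref{eq:dro_shared_opt_prob_combined} (its Claims i and ii) and then cites Lemma~\ref{lem:subgr_min_concave}. That citation gives exactly your vector $\frac{1}{N}\sum_{k=1}^N\grad_{\x}[U_i(\x,\s_{-i};\y_{ii}^{k\star})]|_{\s_i}$, and the paper never justifies replacing it by the gradient at $\yb_i^\star$. So the step you flag is a real gap, shared with the paper, and no argument can close it: the statement is false without extra structure.

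The problem already appears in agent $i$'s own program \eqref{eq:dro_shared_opt_prob}. Take $m=1$, $\h$ the identity, $N=2$, $\hhat^1=1$, $\hhat^2=3$, $\h(\Xi)=[-10,10]$, $\set_i=[-1,1]$, $\epsilon_i=\frac12$, and $U_i(x,\s_{-i};\xi)=(x+2)\xi^2-\frac12x^2$. This is concave in $x$, and convex, differentiable and Lipschitz in $\xi$ on $\Xi$, since $x+2\geq1$. For each $x\in\set_i$ the program amounts to minimizing $(y^1)^2+(y^2)^2$ subject to $|1-y^1|+|3-y^2|\leq1$, whose unique minimizer is $(1,2)$. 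Hence $\U[DR](s)_i(x,\s_{-i})=\frac52(x+2)-\frac12x^2$ on $\set_i$. At any interior $s_i$ the supergradient set is the singleton $\{\frac52-s_i\}$, which is your average of gradients, whereas $\yb_i^\star=\frac32$ gives $\frac94-s_i$.

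What is provable is one of two versions. The first is the averaged-gradient statement; agent $i$ can compute it from its own copies $\y_{ii}^{k\star}$, so the $\phibh_i$ update in Algorithm~\ref{alg:disbrag} should use it. The second is the stated version under the added hypothesis that $\xi\mapsto\grad_{\x}U_i(\x,\s_{-i};\xi)$ is affine. Your other fallback does not work: a uniform shift $\y^{k\star}=\hhat^k-\delta$ leaves the points distinct. Also, Jensen is not pointing the wrong way. If the collapsed configuration $\y^k\equiv\yb_i^\star$ were feasible, it would be optimal. What fails is feasibility: for strictly convex $U_i(\s;\cdot)$ and non-identical $\y^{k\star}$, Jensen in fact proves it infeasible.

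There is a smaller gap in your equivalence step, also shared with the paper's Claims i--ii. Write the in-Laplacian as $(Lz)_i=\sum_{r\in\neigh_i}(z_i-z_r)$. Summing over nodes gives $\sum_i(d_i^{\mathrm{in}}-d_i^{\mathrm{out}})z_i$, so the slacks telescope only on weight-balanced graphs. On a merely strongly connected digraph, Farkas' lemma shows that owner $j$'s inequalities are solvable in $z_{j\cdot}$ iff $\sum_i w_ia_i\leq w_jN\epsilon_j$. Here $a_i$ is node $i$'s block of the $\ell_1$ budget and $w>0$ spans the left kernel of $L$. This weighted budget equals the true one only if $w\propto\one$, so you need balanced (e.g.\ undirected) communication, not just Assumption~\ref{asmp:network_conn}. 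Finally, as printed, \eqref{eq:opt_problem_dist_neigh_equality} reads $(\bld{I}+L)\y_{j\cdot}^k=\zero$, which forces $\y_{j\cdot}^k=\zero$. Your consensus reading $L\y_{j\cdot}^k=\zero$ is the intended one.
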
 
\begin{proof}
  First note that the description of $\U[DR](s)_i$
   in~\eqref{eq:dro_shared_opt_prob} and hence the equality between the right hand sides of~\eqref{eq:utility_dro_shared} and~\eqref{eq:dro_shared_opt_prob} 
  follows from Assumption~\ref{asmp:utility_convex_rv}
  and~\cite{PME-DK:17}.  Next it is easy to see that the optimization
  problem in~\eqref{eq:dro_shared_opt_prob} (combined across all
  agents) is equivalent to the following problem,
\begin{subequations}
\begin{align}
	\label{eq:dro_shared_opt_prob_cost} & \minimize_{\{\{\y_i^k\}_{i \in \agt}\}_{k=1}^N} \frac{1}{N} \sum_{i \in \agt} \sum_{k = 1}^N \U_i(\s_i,\s_{-i};\y_i^k) \\
	\label{eq:dro_shared_opt_prob_bound_one_norm} & \quad \mathrm{s.t.} \sum_{k=1}^N \sum_{l = 1}^m |\hat{h}_l^k - y_{i,l}^k | \leq N \epsilon_i, \,\, \forall i \in \agt \\
	\label{eq:dro_shared_opt_prob_bound_xi_combined} & \qquad \,\,\, \bunder \leq \y_i^k \leq \bover, \,\,\forall k \in \{1,\cdots,N\}, \,\, \forall i \in \agt\,.
\end{align}	  
\label{eq:dro_shared_opt_prob_combined}
\end{subequations}
This is because, we have used the auxiliary variables $\y_i^k$ to
satisfy the equality constraint $\y_i^k = \hhat^k - \x_i^k$,
$\forall \,i,k$. Then,~\eqref{eq:dro_shared_opt_prob_bound_one_norm}
is the same as~\eqref{eq:dro_shared_opt_prob_bound_x} due to the
definition of $\|\cdot\|_1$. Moreover,
~\eqref{eq:dro_shared_opt_prob_bound_xi_combined} is the same
as~\eqref{eq:dro_shared_opt_prob_bound_xi} due to
Assumption~\ref{asmp:box_constraints}. Thus, optimization
problems~\eqref{eq:dro_shared_opt_prob}
and~\eqref{eq:dro_shared_opt_prob_combined} are equivalent.

Next we introduce auxiliery variables to show
that~\eqref{eq:dro_shared_opt_prob_combined}
and~\eqref{eq:opt_problem_dist} are equivalent. To do this, we first
label the feasible sets of~\eqref{eq:dro_shared_opt_prob_combined}
and~\eqref{eq:opt_problem_dist} 
$\Fset$ and $\Fsetb$ respectively; \emph{i.e.}
$\{\{\y_i^k\}_{i \in \agt}\}_{k=1}^N \in \Fset$ and
$\{\{\y_{ji}^k, z_{ji}\}_{i,j \in \agt} \}_{k=1}^N \in \Fsetb$. Now,
since the cost functions in~\eqref{eq:dro_shared_opt_prob_cost}
and~\eqref{eq:opt_problem_dist_cost} are the same, it is enough to
show the following two claims to show equivalence.

\emph{Claim i:}
$\forall \{\{\y_{ji}^k, z_{ji}\}_{i,j \in \agt} \}_{k=1}^N \in
\Fsetb$, $\{\{\y_{ii}^k\}_{i \in \agt}\}_{k=1}^N \in \Fset$. \\To show
this, consider a
$\{\{\y_{ji}^k, z_{ji}\}_{i,j \in \agt} \}_{k=1}^N \in \Fsetb$. Since
$\{\{\y_{ii}^k\}_{i \in \agt}\}_{k=1}^N$
satisfies~\eqref{eq:opt_problem_dist_box}, it also
satisfies~\eqref{eq:dro_shared_opt_prob_bound_xi_combined}. Moreover,
because of~\eqref{eq:opt_problem_dist_neigh_equality}, and
Assumption~\ref{asmp:network_conn} on graph connectivity,
$\y_{ii}^k = \y_{ij}^k$, $\forall i,j \in \agt$. We use
Assumption~\ref{asmp:network_conn} in the next argument
also. Summing~\eqref{eq:opt_problem_dist_own_inequality} for
$i \in \agt$ and~\eqref{eq:opt_problem_dist_neigh_inequality}
$\forall j \in \agt \setminus \{i\}$ shows that
$\{\{\y_{ii}^k\}_{i \in \agt}\}_{k=1}^N$
satisfies~\eqref{eq:dro_shared_opt_prob_bound_one_norm} because of the
previous equality. This concludes the proof of this claim.

\emph{Claim ii:} $\forall \{\{\y_i^k\}_{i \in \agt}\}_{k=1}^N \in \Fset$, $\exists \, \{z_{ji}\}_{i,j \in \agt}$ such that $\{\{\y_{ji}^k, z_{ji}\}_{i,j \in \agt} \}_{k=1}^N \in \Fsetb$ with $\y_i^k = \y_{ij}^k$, $\forall i,j \in \agt$.\\
To show this, consider a
$\{\{\y_i^k\}_{i \in \agt}\}_{k=1}^N \in \Fset$ and take vectors
$\{\{\y_{ji}^k\}_{i,j \in \agt} \}_{k=1}^N$ such that
$\y_i^k = \y_{ij}^k$, $\forall i,j \in \agt$. Then, because
of~\eqref{eq:dro_shared_opt_prob_bound_xi_combined} and
Assumption~\ref{asmp:network_conn}, by construction,
$\{\{\y_{ji}^k\}_{i,j \in \agt} \}_{k=1}^N$
satisfies~\eqref{eq:opt_problem_dist_neigh_equality}
and~\eqref{eq:opt_problem_dist_box}.
Then the claim is easy to show because of the connectivity assumption.
This concludes the proof of this other claim.

Combining these concludes that
~\eqref{eq:dro_shared_opt_prob_combined}
and~\eqref{eq:opt_problem_dist} are equivalent.  Then the proof of the
claim in this lemma follows from
Lemma~\ref{lem:subgr_min_concave}. The proof is now complete.
\end{proof}

We would like to point out that even though the technique we used in
the previous proof is similar to~\cite{AC-JC:16-allerton}; our setup
is very different, since the agents do not have access to the cost
function and constraints simultaneously.  We would like to remind the
reader that~\eqref{eq:opt_problem_dist} is a convex program that can
be solved entirely using local information. As such, agent
$i \in \agt$ handles the variables
$\{\{\y_{ji}^k, z_{ji}\}_{j \in \agt}\}_{k=1}^N$. There is a whole
body of literature that provides iterative solutions
to~\eqref{eq:opt_problem_dist}. In this work, we use the primal-dual
subgradient method~\cite{SB-LV:04} using the augmented Lagrangian
approach to compute the required supergradients for the d-\algoname
updates.
While it is possible to provide ISS properties for the distributed
updates and perform the updates of such an algorithm in a synchronous
manner with~\eqref{eq:disbrag} we reserve that analysis for future
work. To that extent, we condense the d-\algoname algorithm for shared
samples into Algorithm~\ref{alg:disbrag}. We formally state the
convergence properties of this algorithm in the next result and
conclude by remarking on the communication protocol later. Since this
a direct consequence of all the previous results, we skip a formal
proof.

\begin{thm}\thmtitle{Convergence of d-\algoname with shared samples}
\label{thm:final_result}
Suppose the hypothesis of Theorem~\ref{thm:disbrag_converge} holds and
suppose $\s(t)$ is the sequence obtained from
Algorithm~\ref{alg:disbrag} from initial condition
$\{\s_i(0) = \s_i(-1) \in \set_i\}_{i \in \agt}$. Then, the bound
in~\eqref{eq:dyn_iss_w_consensus} holds with
$\omega(\cdot) \ldef d(\cdot,\ne(\game[DR](s)))$.  \proofend
\end{thm}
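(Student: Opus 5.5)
The plan is to reduce the shared-sample case to the perturbed analysis behind Theorem~\ref{thm:disbrag_converge}. The key point is that the only new ingredient in Algorithm~\ref{alg:disbrag} is how each agent obtains its supergradient. We show that this supergradient is a valid element of $\del_\x[\U[DR](s)_i(\x,\s_{-i})]|_{\s_i}$, up to the same kind of bounded error that Section~\ref{sec:dist_algo} already absorbs.

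\emph{Step 1: the ingredients of Theorem~\ref{thm:isbrag_converge} carry over to $\game[DR](s)$.} Lemma~\ref{lem:dro_utility_concave} and Proposition~\ref{prop:drone_existence} are stated for $\U[DR](s)_i$, so $\ne(\game[DR](s))\neq\emptyset$ and $\omega(\cdot)=d(\cdot,\ne(\game[DR](s)))$ is a size function. Lemma~\ref{lem:v_size_func} is also phrased for $\game[DR](s)$, which gives the sandwich \eqref{eq:v_upper_lower_k_infty} with some $\sigma_1,\sigma_2\in\Mcal{K}_\infty$. The hypotheses of Theorem~\ref{thm:disbrag_converge} are read with $\game[DR](s)$ in place of $\game[DR]$: amicability, Assumption~\ref{asmp:utility_bounded_gradient}, and the parameter choices \eqref{eq:alpha_choice}--\eqref{eq:lambda_choice}. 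Under this reading, the inner-product bound \eqref{eq:v_change_ip} and the increment bound \eqref{eq:v_change_norm} go through verbatim.

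\emph{Step 2: the supergradient computed in Algorithm~\ref{alg:disbrag} is admissible.} At each outer step $t$, agent $i$ uses its consensus estimate $\hat{\s_{-i}}$ from \eqref{eq:disbrag_chi_ij}--\eqref{eq:disbrag_s_hat} and solves \eqref{eq:opt_problem_dist} distributedly. Under Assumptions~\ref{asmp:network_conn}, \ref{asmp:utility_convex_rv} and \ref{asmp:box_constraints}, Lemma~\ref{lem:supergrad_opt} then gives
\[
\grad_\x[\U_i(\x,\hat{\s_{-i}};\yb_i^\star)]|_{\s_i}\in\del_\x[\U[DR](s)_i(\x,\hat{\s_{-i}})]|_{\s_i}.
\]
So the quantity entering $\phibh_i$ has the form $\v_i(\s(t))+\Deltab_i(t)$, where $\Deltab_i(t)$ accounts for two effects: using a different supergradient than the min-norm one, and using $\hat{\s_{-i}}$ instead of $\s_{-i}$. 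As in the discussion after \eqref{eq:dyn_hat}, $\|\Deltab_i\|\le 2B_i$ and the induced strategy perturbation satisfies $\|\Deltabpr_i\|\le D_i$. Consequently $\|\Deltabpr\|\le J$, since $\suppf_i(\phibh_i)\in\set_i$ regardless of how $\phibh_i$ is formed.

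\emph{Step 3: conclude the ISS estimate.} Lemma~\ref{lem:lyap_change_perturb} applies unchanged, because it depends only on the structure of \eqref{eq:dyn_hat}. Combining it with \eqref{eq:v_change_ip} and \eqref{eq:v_change_norm} gives
\[
V^+-V\le-\underline{\alpha}V+K\rho_1(\bar\alpha,\bar\mu)+J\rho_2(\bar\alpha,\bar\mu).
\]
Comparing this with \eqref{eq:iss}, exactly as in the proofs of Theorems~\ref{thm:isbrag_converge} and \ref{thm:disbrag_converge}, yields \eqref{eq:dyn_iss_w_consensus} with the new $\omega$.

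\emph{Main obstacle.} The delicate point is Step~2, namely the exactness of the inner solve. Lemma~\ref{lem:supergrad_opt} assumes an exact optimizer of \eqref{eq:opt_problem_dist}, whereas the primal-dual subgradient routine returns only an approximate one. I would first argue that the bound is insensitive to this. Whatever $\yb_i$ is produced, $\grad_\x\U_i(\cdot;\yb_i)$ is bounded, using continuity of the gradient on the compact set $\set\times\h(\Xi)$ and enlarging $B_i$ if necessary. More importantly, the output of $\suppf_i$ always lies in $\set_i$, so $\|\Deltabpr\|\le J$ holds irrespective of solver accuracy. This is precisely why the bound contains $J\rho_2$, not a term depending on the solver error. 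If this fails, the fallback is to run the inner solver to convergence at each $t$ (the two-time-scale reading of Algorithm~\ref{alg:disbrag}) and invoke Lemma~\ref{lem:supergrad_opt} exactly.
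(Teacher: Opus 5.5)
Your proposal is correct and follows the route the paper has in mind when it calls Theorem~\ref{thm:final_result} a direct consequence of the previous results. Lemma~\ref{lem:supergrad_opt} casts Algorithm~\ref{alg:disbrag} as an instance of the perturbed dynamics~\eqref{eq:dyn_hat}, and the argument of Theorem~\ref{thm:disbrag_converge} (Lemma~\ref{lem:lyap_change_perturb} plus the worst-case bound on $\|\Deltabpr\|$) is then rerun with $\omega$ measured to $\ne(\game[DR](s))$. Your remark that the estimate only uses $\suppf_i(\phibh_i)\in\set_i$, and is therefore insensitive to the accuracy of the inner solver (and of the consensus estimates), is accurate within this framework and explains why no such error term appears in~\eqref{eq:dyn_iss_w_consensus}.
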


\begin{algorithm}[t]
\caption{d-\algoname for shared samples}
\label{alg:disbrag}
\begin{algorithmic}[1]
\Require Consensus time steps $\Tcon$, optimization time steps $\Topt$

\State Initialize $\s_i \gets \s_i(0) \in \set_i$, $\forall i \in \agt$
\State Initialize $\sh_{ij} \gets \texttt{random}(\set_j)$, $\forall i \in \agt$, $\forall j \in \agt \setminus \{i\}$ \LineComment{randomly initialize estimates}

\For{$t \in \intnonneg$}
	\For{$\tau \in {1,\cdots,\Tcon}$}
		\State Update $\{\sh_{ij}\}_{j \in \agt \setminus \{i\}}$ using steps~\eqref{eq:disbrag_chi_ij},~\eqref{eq:disbrag_chi_ii} of dynamic consensus 
	\EndFor
	
	\For{$t' \in {1,\cdots,\Topt}$}
		\State Compute $\{\{\y_{ji}^{k \star}, z_{ji}^{\star}\}_{i,j \in \agt}\}_{k=1}^N$ that is a solution to~\eqref{eq:opt_problem_dist} in a distributed way
	\EndFor
	
	\For{$i \in \agt$}
		\State $\yb_i^{\star} \gets \frac{1}{N} \sum_{k=1}^N \y_{ii}^{k\star}$
		\State $\phibh_i \gets \mu_i\grad_{\x}[\U_i(\x,\s_{-i};\yb_i^{\star})]|_{\s_i}  + \w_i(\s_i,t)$
		\State $\s_i \gets \s_i + \alpha_i \, \Big[\suppf_i(\phibh_i) - \s_i \Big]$
	\EndFor
\EndFor
\end{algorithmic}
\end{algorithm}

\section{Simulations} \label{sec:sims} In this section, we illustrate
the trajectory evolution of the algorithms that we proposed. For all
our simulations we take $6$ agents, \emph{i.e.}
$\agt = \{1,\cdots,6\}$. All values are rounded off to 3 decimal places.

\paragraph*{Effect of step-size $\alpha_i$:} First we study the effect
of $\{\alpha_i\}_{i \in \agt}$ on the convergence of \algoname in
Definition~\ref{def:disbrag}. We specifically want to show that our
algorithm is able to converge near a NE even when the utilities are
non-smooth. To that affect, we choose $\set_i = [0,2]$,
$\forall i \in \agt$. Then, we set the utilities of the agents as
\begin{align*}
  \U[DR]_i(s_i,s_{-i}) = - |s_i - 0.25 \times i| \times \prod_{j \in \agt \setminus \{i\}}s_j, \quad \forall i \in \agt.
\end{align*}
Note that we have denoted the strategies as $s_i \in \real$. Then
clearly the strategy profile $s_1^\star = 0.25$, $s_2^\star = 0.5$,
$s_3^\star = 0.75$, $s_4^\star = 1$, $s_5^\star = 1.25$, and
$s_6^\star = 1.5$ is the unique NE. We simulate \algoname from an
initial condition $s_1(0) = 0.097$, $s_2(0) = 0.578$,
$s_3(0) = 1.442$, $s_4(0) = 0.043$, 
$s_5(0) = 0.412$, and $s_6(0) = 0.101$. Moreover,
to isolate the effect of the step-size parameter, we fix
$\mu_i = 0.5$, and $\lambda_i = 1$, $\forall i \in \agt$. The solution
trajectories for two cases: i) $\alpha_i = 0.1$, $\forall i \in \agt$,
and ii) $\alpha_i = 0.01$, $\forall i \in \agt$ are shown in
Figure~\ref{fig:alpha_effect}. In accordance to the bound provided in
Theorem~\ref{thm:isbrag_converge}, notice that with the higher the
$\alpha_i$ value, the solutions converge faster, but they converge to a
larger set around the NE. On the other hand, with a smaller $\alpha_i$
value, the solutions converge closer to the NE, but take more
time-steps to get there.

\begin{figure}
\begin{tabular}{c}
	\qquad \,\, \begin{tikzpicture}
		\node[draw, rounded corners=3pt, inner sep=7pt] {
    			\includegraphics[trim=40pt 17pt 40pt 15pt, clip, scale=1]{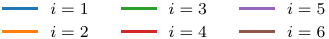}
		};
	\end{tikzpicture} \\
	\includegraphics[trim=7pt 20pt 7pt 5pt, clip, scale=1]{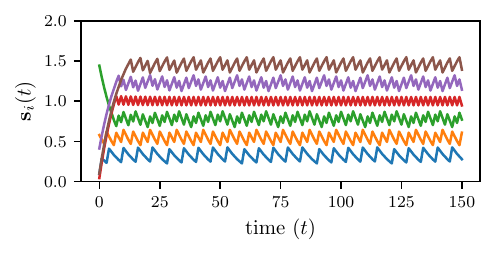} \\
	\includegraphics[trim=7pt 7pt 7pt 5pt, clip,scale=1]{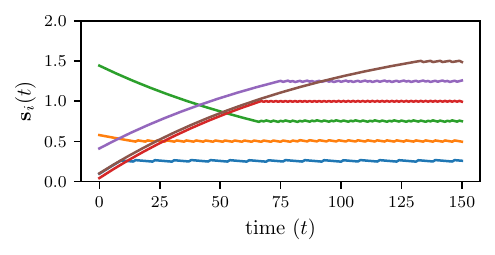}
\end{tabular}
\caption{Effect of step-size parameters $\{\alpha_i\}_{i \in \agt}$ on convergence of \algoname. The plots share a common legend. (Top) $\alpha_i = 0.1$, $\forall i \in \agt$. (Bottom) $\alpha_i = 0.01$, $\forall i \in \agt$.}
\label{fig:alpha_effect}
\end{figure}

\begin{figure}
\begin{tabular}{c}
	\qquad \,\,\, \begin{tikzpicture}
		\node[draw, rounded corners=3pt, inner sep=7pt] {
    			\includegraphics[trim=40pt 17pt 40pt 15pt, clip, scale=1]{legend.pdf}
		};
	\end{tikzpicture} \\
	\includegraphics[trim=7pt 20pt 7pt 5pt, clip, scale=1]{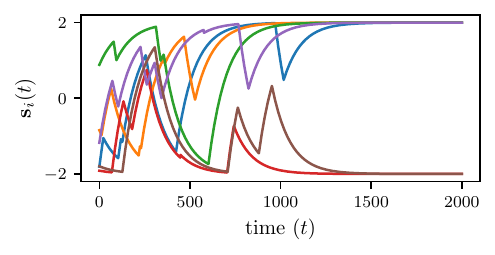}\\
	\includegraphics[trim=7pt 20pt 7pt 5pt, clip, scale=1]{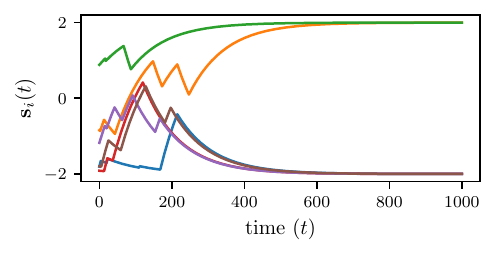}\\
	\includegraphics[trim=7pt 20pt 7pt 5pt, clip, scale=1]{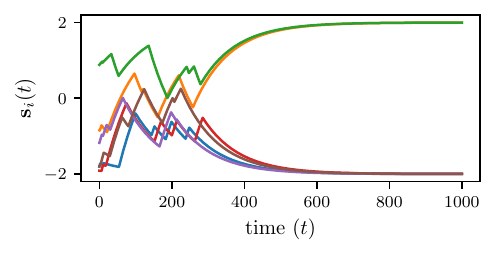}\\
	\includegraphics[trim=7pt 7pt 7pt 5pt, clip, scale=1]{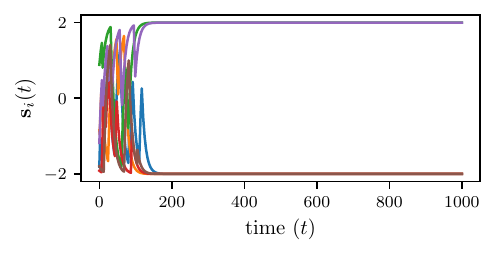}\\
\end{tabular}
\caption{Convergence properties of d-\algoname. The plots share a common legend Different plots correspond to different values of communication time steps $\Tcon$ and step-size parameters $\{\alpha_i\}_{i \in \agt}$. The figures from the top are arranged as follows. (First) $\Tcon = 10$, $\alpha_i = 0.01$, $\forall i \in \agt$. (Second) $\Tcon = 50$, $\alpha_i = 0.01$, $\forall i \in \agt$. (Third) $\Tcon = 100$, $\alpha_i = 0.01$, $\forall i \in \agt$. (Fourth) $\Tcon = 100$, $\alpha_i = 0.1$, $\forall i \in \agt$.}
\label{fig:disbrag}
\end{figure}

\paragraph*{Performance of d-\algoname:}
For this scenario, we test our algorithm on a non-monotone game and
report (empirically) on how the parameters of the game affect the
steady-state.  Further, we allow the agents to communicate over a
cyclic graph only, \emph{i.e.}
$\edg = \{(1,2), (2,3), (3,4), (4,5), (5,6), (6,1)\}$. We choose
$\set_i = [-2,2]$, $\forall i \in \agt$ and set the utilities of the
agents as
\begin{align*}
  \U[DR](s)_i(s_i,s_{-i}) = \prod_{j \in \agt} s_j, \quad \forall i \in \agt.
\end{align*}
It is easy to see that the Nash equilibrium is not
unique. Specifically, any NE (other than $\sst = \zero$) must have
$s_i^{\star} \in \{-2,2\}$, and there must be even number of agents
with strategy $-2$. The evolutions of d-\algoname, from an initial
strategy profile of $s_1(0) = -1.805$, $s_2(0) = -0.844$,
$s_3(0) = 0.884$, $s_4(0) = -1.913$,
$s_5(0) = -1.176$, and $s_6(0) = -1.797$, 
with different choices of $\Tcon$ and
$\{\alpha_i\}_{i \in \agt}$ are shown in Figure~\ref{fig:disbrag}.
Note that, the strategy we converge to is dependent on several of the
algorithm parameters. Firstly, the first three figures from the top
have same step-size parameters and differ in the number of consensus
steps ($\Tcon$). As expected, the time steps needed to converge
decreases as $\Tcon$ increases. The last two graphs in
Figure~\ref{fig:disbrag} share the same value of $\Tcon$ and differ in
the step-size parameters. The larger the $\alpha_i$, the faster the
convergence, but at the cost of high chattering-like behavior in the
transients; while the smaller the $\alpha_i$ the longer it takes to
converge with a reduced chattering in the transients. In any case,
these trajectories do not sustain limiting oscillatory behavior (as
opposed to the trajectories in Figure~\ref{fig:alpha_effect}) since
the utilities are smooth and the game admits proper pseudogradients.

\section{Conclusion} \label{sec:conclude} In this work, we provide
centralized and distributed algorithms that seek the Nash equilibria
of a distributionally robust game (DRoNE's). First, we present
conditions under which these DRoNE's exist, and relate them to the NE
of the associated stochastic game. This relation (which depends on the
number of sample points available to the agents) predicts that as the
number of data points increase, with high probability DRoNE's become
better approximations of the NE's of the stochastic game. Second, we
define and provide conditions on the parameters of the \algoname
dynamics that ensure convergence to arbitrarily small regions around
the DRoNE's. This comes at a cost of slower rate of convergence. To
handle the distributed case, we provide methods in which the agents
can approximate the supergradients of their (possibly) non-smooth
utility by exchanging information between their neighbors. When the
game has amicable supergradients, we introduce an itertial term that
produces monotonicity-like behaviors. When the game has non-unique NE,
the region around which DRoNE the algorithm converges to is highly
dependent on the choice of algorithm parameters.  In the future, we
would like to extend the class of games compatible with this framework
by introducing more general inertial terms. We will also handle
simultaneous
  communication
  and estimate update among agents.

  \section*{Declaration of generative AI and AI-assisted technologies
    in the manuscript preparation process}
  During the preparation of this work the authors used ChatGPT to help
  with the visualization of the solution trajectories and figures in
  Section~\ref{sec:sims}. After using this tool/service, the authors
  reviewed and edited the content as needed and take full
  responsibility for the content of the published article. No AI
  tool/service was used for preparing the code that generates the
  solution trajectories or for any other part of the paper.

\appendix
\section{Auxiliary proofs} \label{app:proofs}

\paragraph*{Proof of Lemma~\ref{lem:change_max_lin_func}:}
Consider an arbitrary but fixed $\phib_i^1$, with
$\|\phib_i^1\| \leq M_i$ and an arbitrary but fixed
$\x_i^* \in \Xset^*_i(\phib_i^1)$. Then
$f_i(\phib_i^1) = \ip{\x_i^*}{\phib_i^1}$. Further, since $\|\cdot\|$
is non-negative,
\begin{align*}
  \ip{\x_i^*}{\phib_i} \leq
  & \,\, \ip{\x_i^*}{\phib_i^1} + \ip{\x_i^*}{[\phib_i - \phib_i^1]} + \mathfrak{C} \|\phib_i - \phib_i^1\|, 
\end{align*}
$\forall \phib_i \in \ball_{M_i}(\zero)$, for any
$\mathfrak{C} \in \realnonneg$. Now take an arbitrary but fixed
$\phib_i^2 \in \ball_{M_i}(\zero)$ and let
$\y_i^* \in \Xset^*_i(\phib_i^2)$. Thus, 
$f_i(\phib_i^2) = \ip{\y_i^*}{\phib_i^2}$. Then, using the previous
inequality,
\begin{align*}
  \ip{\y_i^*}{\phib_i^2} & \leq \ip{\x_i^*}{\phib_i^1} + \ip{\x_i^*}{[\phib_i^2 - \phib_i^1]} + \mathfrak{C} \|\phib_i^2 - \phib_i^1\| \\
                         & \hspace{3em} + \ip{[\y_i^* - \x_i^*]}{[\phib_i^2 - \phib_i^1]} + \ip{[\y_i^* - \x_i^*]}{\phib_i^1} \\
                         & \leq \ip{\x_i^*}{\phib_i^1} + \ip{\x_i^*}{[\phib_i^2 - \phib_i^1]} + [\mathfrak{C} + D_i] \|\phib_i^2 - \phib_i^1\|\,.
\end{align*}
For the last inequality, we rely on the Cauchy-Schwarz inequality,
$|\ip{[\y_i^* - \x_i^*]}{[\phib_i^2 - \phib_i^1]}| \leq \|\y_i^* -
\x_i^*\| \|\phib_i^2 - \phib_i^1\|$, and further used that $D_i$ is
the diameter of $\set_i$. Moreover, by definition
$\ip{[\y_i^* - \x_i^*]}{\phib_i^1} = \ip{\y_i^*}{\phib_i^1} -
\ip{\x_i^*}{\phib_i^1} \leq 0$, since $\y_i^* \in \set_i$. Now, since
$\mathfrak{C} + D_i \geq D_i$ is independent of $\phib_i^1$,
$\phib_i^2$, and $\x_i^*$, the proof is complete.  \proofend

\paragraph*{Proof of Lemma~\ref{lem:subgr_min_concave}:}
We first prove the first claim. Let $\x \in \dom$, $i^\star \in \Iset$
be as in the hypothesis and let $\zetab \in \del
f_{i^\star}(\x)$. Then, $\forall \y \in \set$,
\begin{align*}
  g(\y) \leq f_{i^\star}(\y) \leq f_{i^\star}(\x) + \ip{\zetab}{[\y-\x]} = g(\x) + \ip{\zetab}{[\y-\x]}\,.
\end{align*}
Here, the first inequality and the equality comes from the definition
of $g$ and the assumption on $i^\star$. The second inequality comes
from Definition~\ref{def:subgr}. This
proves~\eqref{eq:intersection_subgr}.

Now, for the second part, by property of the supergradients and the hypothesis, $\del g(\x) = \text{co} (\{\nabla f_i(\x)\}_{i \in \Iset})$, where $\text{co}(\cdot)$ represents the convex hull.
Then, the second part follows directly from Carath{\'e}odory's
theorem from convex geometry. The proof is now complete.  \proofend

%

\bibliographystyle{unsrt}      
\bibliography{./bib/alias.bib,./bib/SM.bib,./bib/JC.bib,./bib/SMD-add.bib}    

\end{document}